\documentclass[11pt]{amsart}
\usepackage{amssymb}
\usepackage{amsmath}
\usepackage{amsthm}
\usepackage{color}
\usepackage{appendix}
\usepackage[unicode,breaklinks=true,colorlinks=true]{hyperref}
\usepackage{CJKutf8}

\textwidth 15.00cm
\textheight 22.5cm
\topmargin 0.20cm
\headsep15pt
\headheight 15pt
\oddsidemargin .5cm
\evensidemargin .5cm
\marginparwidth 40pt
\parskip 5pt

\makeatletter
\def\LaTeX{\leavevmode L\raise.42ex
	\hbox{\kern-.3em\size{\sf@size}{0pt}\selectfont A}\kern-.15em\TeX}
\makeatother

\newcommand{\BibTeX}{{\rm B\kern-.05em{\sc
			i\kern-.025emb}\kern-.08em\TeX}}

\makeatletter
\def\@currentlabel{2.1}\label{e:dispaa}
\def\@currentlabel{2.21}\label{e:dispau}
\def\@currentlabel{2.22}\label{e:dispav}
\def\@currentlabel{2.23}\label{e:dispaw}
\def\@currentlabel{2.24}\label{e:dispax}
\def\theequation{\thesection.\@arabic\c@equation}
\makeatother

\newcounter{mnotecount}[section]

\newcommand{\rmnote}[1]{}

\numberwithin{equation}{section}
\renewcommand{\theequation}{\arabic{section}.\arabic{equation}}
\newtheorem{thm}{Theorem}[section]
\newtheorem{lem}[thm]{Lemma}

\newtheorem{prop}[thm]{Proposition}
\theoremstyle{definition}
\newtheorem{defn}{Definition}[section]
\newtheorem{rem}{Remark}[section]


\newcommand{\M}{\mathbb{M}}
\newcommand{\R}{\mathbb{R}}

\newcommand\supp{\operatorname{supp}}

\allowdisplaybreaks

\begin{document}
		\begin{CJK}{UTF8}{gbsn}
	\author[Y. Yang]{Yifan Yang}
	\address{School of Mathematics and statistics, Xi'an Jiaotong University, Xi'an 710049, P.R. China}
	\email{yangyifan2021@stu.xjtu.edu.cn}
\title[self-similar solutions of the MHD-boussinesq system]{Forward self-similar solutions of the MHD-boussinesq system with newtonian gravitational field}

\begin{abstract}
	This paper is concerned with the existence of forward self-similar solutions to the three-dimensional Magnetohydrodynamic-Boussinesq (MHD-Boussinesq) system with newtonian gravitational field. By employing a blow-up argument and the Leray-Schauder theorem, we construct a forward self-similar solution to this system without imposing any smallness assumptions on the self-similar initial value.
\end{abstract}

 \maketitle
 
\setcounter{equation}{0}
\setcounter{equation}{0}
\section{Introduction}
In this paper, we study the existence of forward self-similar solutions to the Cauchy problem for the MHD-Boussinesq system.
This system integrates the Boussinesq equations of fluid dynamics with Maxwell's equations of electromagnetism, with the displacement current neglected. It models the convection of incompressible flow driven by buoyancy effects from thermal or density fields, along with the Lorentz force generated by the fluid's magnetic field. Specifically, the MHD-Boussinesq system is closely related to a natural type of Rayleigh-B\'{e}nard convection, occurring in a horizontal layer of electrically conducting fluid heated from below in the presence of a magnetic field. For more details on the physical background can be found in \cite{KAG,LLD,MA,MGR1}. The fundamental equations of MHD-Boussinesq is given by
\begin{align}\label{E1.1}
\begin{split}
\left.
\begin{aligned}
\partial_{t} u-\nu\Delta u+(u\cdot\nabla)u+\nabla p=(b\cdot\nabla)b+\theta\nabla G\\
\partial_{t} b-\mu\Delta b+(u\cdot\nabla)b=(b\cdot\nabla)u\\
\partial_{t} \theta-\kappa\Delta\theta+(u\cdot\nabla)\theta=0\\
\mbox{div}\,u =\mbox{div}\,b=0\\
\end{aligned}\ \right\}\ \ \mbox{in}\ \ \ \R^{3}\times (0,+\infty)
\end{split}
\end{align}
with initial condition
\begin{equation}\label{E1.1-initial}
u(\cdot,0)=u_{0},\ \ b(\cdot,0)=b_{0},\ \ \theta(\cdot,0)=\theta_{0}\quad\mbox{on}\quad\R^3,
\end{equation}
where $u:\R^{3}\times\R^{+}\rightarrow\R^{3}$ is the unknown velocity field, $b:\R^{3}\times\R^{+}\rightarrow\R^{3}$ is the magnetic field and the unknown scalar function $\theta:\R^{3}\times\R^{+}\rightarrow\R$ stands for the temperature. The unknown scalar function $p:\R^{3}\times\R^{+}\rightarrow\R$  represents the pressure of the fluid. The positive constant $\nu,\mu,\kappa$ represent the kinematic viscosity, magnetic diffusivity and  heat
conductivity coefficient, respectively. As these physical constants do not play any role in our proof, we assume without loss of generality that $\nu=\mu=\kappa=1$ throughout this paper.

In the first equation of \eqref{E1.1}, $\nabla G$ represents the gravitational force acting on the fluid. The case $\nabla G = ge_{n}$, where $g$ is the gravitational acceleration and $e_{n}$ is the unit vector in the $x_{n}$ direction, has been widely studied (see, e.g., \cite{LP,LY,PX,ZX}). However, in Boussinesq system, i.e. $b=0$ in the MHD-Boussinesq equations \eqref{E1.1}, from some physical point of view, when the region is whole space, it is better to consider the gravitational potential\cite{LBM,EFMS} as follow
$$
G(x)=\int_{\R^{3}}\frac{1}{|x-y|}m(y)dy, 
$$
where $m\geq0$ denotes the mass density of the object acting on the fluid by means of gravitation. Moreover, if the size of the object is negligible, we may choose $G(x)=1/|x|$. In this paper, we consider the singular gravitational force in MHD-Boussinesq equation, that is, taking
\begin{equation}\label{G}
\nabla G(x)=\nabla\frac{1}{|x|}
\end{equation}
in equations \eqref{E1.1}.

Let us first recall the scaling property of the fundamental fluid dynamics equation, the Navier-Stokes equations. In system \eqref{E1.1}, setting $b=0$ and $\theta=0$ yields the incompressible Navier-Stokes equation
\begin{equation}\label{NS-01}
	\left.
	\begin{aligned}
		\partial_{t} u-\Delta u+(u\cdot\nabla)u+\nabla p=0\\
		\mbox{div}\, u=0
	\end{aligned}\ \right\}\ \mbox{in} \ \ \R^{3}\times (0,\infty)
\end{equation}
with initial condition
\begin{equation*}\label{E1.1-initial2}
	u(\cdot,0)=u_{0}(x)\ \ \mbox{in}\ \ \R^3.
\end{equation*}
The system \eqref{NS-01} exhibits the following scaling property: for any $\lambda>0$, if $(u,p)$ is a solution of \eqref{NS-01}, then $(u_{\lambda},p_{\lambda})$ is also a solution, where
\begin{equation*}\label{scaling0}
		u_{\lambda}(x,t)=\lambda u(\lambda x,\lambda^{2}t),\ \
		p_{\lambda}(x,t)=\lambda^{2} p(\lambda x,\lambda^{2}t).
\end{equation*}
A solution $(u,p)$ to \eqref{NS-01} is called forward self-similar if
\begin{equation*}
	u=u_{\lambda},\
	p=p_{\lambda},\ \mbox{for every}\ \lambda>0.
\end{equation*}
In the study of self-similar solution, it is natural to assume that the initial data is also invariant under the scaling
\begin{equation*}
	u_{0}(x)=\lambda u_{0}(\lambda x),
\end{equation*}
which means that the initial conditional must be (-1)-homogeneous.

Similar to the Navier-Stokes equations, the MHD-Boussinesq equations with singular gravitational force \eqref{G} enjoys the following scaling property: for any $\lambda>0$, if $(u,b,\theta,p)$ is a solution of system \eqref{E1.1}, then
\begin{equation}\label{scaling1}
\begin{cases}
u_{\lambda}(x,t)=\lambda u(\lambda x,\lambda^{2}t),\\
b_{\lambda}(x,t)=\lambda b(\lambda x,\lambda^{2}t),\\
\theta_{\lambda}(x,t)=\lambda \theta(\lambda x,\lambda^{2}t),\\
p_{\lambda}(x,t)=\lambda^{2} p(\lambda x,\lambda^{2}t),
\end{cases}
\end{equation}
is also a solution of system \eqref{E1.1}. The forward self-similar solution of \eqref{E1.1} is a solution which is scale-invariant under the scaling \eqref{scaling1}, i.e.,
\begin{equation}\label{scaling11}
	u=u_{\lambda},\ b=b_{\lambda},\ \theta=\theta_{\lambda},\ 
	p=p_{\lambda},\ \mbox{for every}\ \lambda>0.
\end{equation}
And the corresponding initial value \eqref{E1.1-initial} satisfies
\begin{equation}\label{initial scale}
	u_{0}(x)=\lambda u_{0}(\lambda x),\ 	b_{0}(x)=\lambda b_{0}(\lambda x),\ 	\theta_{0}(x)=\lambda \theta_{0}(\lambda x).\ 
\end{equation}

Let us briefly review some related results in fluid dynamics equation. For the Navier-Stokes equations, the existence and uniqueness of forward self-similar solutions have been established under various frameworks, provided suitable smallness conditions are imposed on the initial data. These include Morrey spaces \cite{GT}, Lorentz spaces \cite{B}, and other function spaces \cite{C1,C2,KT2}. However, the existence of forward self-similar solutions for large initial values has been an open problem for a long time. Recently, Jia and \v{S}ver\'{a}k \cite{JS} established the existence of smooth forward self-similar solutions for arbitrary scale-invariant initial values $u_{0} \in C_{loc}^{\alpha}(\R^{3} \setminus {0})$. They achieved this by demonstrating local H\"{o}lder regularity of Leray solutions near $t=0$ using the iteration method of Caffarelli, Kohn, and Nirenberg \cite{CKN}, and then proving existence through the Leray-Schauder fixed point theorem. Their method imposes no restriction on the size of the initial value, though it does not address uniqueness. Later, Korobkov and Tsai \cite{KT} introduced a new strategy to construct large forward self-similar solutions in half-space. Specifically, they employed a blow-up argument to derive $H^{1}$ estimates for the corresponding elliptic system and then used the fixed point theorem to establish the existence of solutions. This blow-up argument was first used by De Giorgi in his work on minimal surfaces \cite{GE} and was more recently applied by Abe and Giga \cite{AG} to obtain the analyticity of Stokes semigroups in $L^{\infty}$-type spaces. For more works on the existence and regularity of self-similar solutions to the Navier-Stokes equations, we refer to the interested
readers to \cite{CGNP,LMZ,NRS,T} and reference therein.

If the fluid is unaffected by temperature, i.e., $\theta=0$ in \eqref{E1.1}, the classical MHD equations are obtained. Due to the highly similar structure of the MHD and Navier-Stokes equations, many analogous results have been obtained. For example, He and Xin \cite{HX} used the Picard contraction principle to construct a class of globally unique forward self-similar solutions with small initial data in Besov space, Lorentz space and pseudo-measure space. Subsequently, the uniqueness result of the self-similar solution was improved in \cite{BRT}. Recently, the existence of self-similar solutions for large initial data has been established in various frameworks, including weak-$L^{3}$ space\cite{L}, Besov space\cite{ZZ} and weighted $L^{2}$ space\cite{FDJ}. More recently, motivated by Korobkov and Tsai \cite{KT}, Yang \cite{Y} constructed a global-time forward self-similar solutions to the MHD equations for arbitrarily large self-similar initial data belong to $L_{loc}^{\infty}(\R^{3}\setminus\{0\})$. On the other hand, when the fluid is not affected by the Lorentz force, that is, $b=0$ in \eqref{E1.1}, one obtains the Boussinesq equations. Results concerning the existence and uniqueness of forward self-similar solutions to the Boussinesq equations with small initial data can be found in \cite{DAF, FVR, GKNP}. However, for large scale-invariant initial values belong to $L^{\infty}_{loc}(\R^{3}\backslash\{0\})$, only \cite{LBGK} has established the existence of self-similar solutions with singular gravitational force \eqref{G}.

The study of equation \eqref{E1.1} originated from investigations into the magnetic B\'{e}nard problem\cite{BHA}. The stability and instability of this problem have attracted considerable attention, leading to a series of results (see, e.g., \cite{GP,MGR,MGR1}). Additionally, the existence and well-posedness of weak solutions to the MHD-Boussinesq equations \eqref{E1.1} with $\nabla G = ge_{n}$ have been recently addressed in \cite{BDG,BDL,CD,LP}. However, to the best of our knowledge, research on self-similar solutions of the MHD-Boussinesq equations \eqref{E1.1} with the singular gravitational force \eqref{G} remains sparse. 

In this paper, we construct a forward self-similar solution to the MHD-Boussinesq equations \eqref{E1.1} for any initial data that are (-1)-homogeneous and essentially bounded on the unit sphere of $\mathbb{R}^{3}$. This work is inspired by \cite{LBGK} and \cite{KT}, and the main result is as follows.
\begin{thm}\label{T1.1}
	Let $(u_{0},b_{0},\theta_{0})\in \mathcal{L}_{loc}^{\infty}(\R^{3}\backslash\{0\})$ be (-1)-homogeneous, with $\mathrm{div} u_{0}=\mathrm{div} b_{0}=0$. Then, there exists at least one forward self-similar solution $(u, b, \theta)$ to problem \eqref{E1.1}-\eqref{E1.1-initial}, which possesses the following properties:
	\begin{itemize}
		\item  $(u,b,\theta)\in BC_{w}\big([0,\infty),\mathcal{L}^{3,\infty}(\R^{3})\big)$;
		\item for all $t>0$ and $2\leq p\leq6$, we have
		\begin{align}\label{T1}
			\begin{split}
				\|u(t)-e^{t\Delta}u_{0}\|_{p}+\|b(t)-e^{t\Delta}b_{0}\|_{p}+\|\theta(t)-e^{t\Delta}\theta_{0}\|_{p}\leq Ct^{\frac{3}{2p}-\frac{1}{2}},\\
				\|\nabla u(t)-\nabla e^{t\Delta}u_{0}\|_{2}+\|\nabla b(t)-\nabla e^{t\Delta}b_{0}\|_{2}+\|\nabla\theta(t)-\nabla e^{t\Delta}\theta_{0}\|_{2}\leq Ct^{-\frac{1}{4}}.
			\end{split}
		\end{align}
	\end{itemize}
\end{thm}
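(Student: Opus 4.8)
The plan is to follow the strategy pioneered by Korobkov--Tsai \cite{KT} and adapted to the Boussinesq setting in \cite{LBGK}, combining a blow-up argument with the Leray--Schauder fixed point theorem. First I would reformulate the self-similar ansatz: writing $u(x,t)=\frac{1}{\sqrt t}U\!\left(\frac{x}{\sqrt t}\right)$, and similarly for $b$ and $\theta$ with profiles $B$ and $\Theta$, substitution into \eqref{E1.1} turns the parabolic system into a stationary (Leray-type) elliptic system for $(U,B,\Theta,P)$ on $\R^3$, in which the time-scaling produces the extra drift terms $-\frac12 U -\frac12 (y\cdot\nabla)U$ etc., and the gravitational term becomes $\Theta\,\nabla(1/|y|)$, which is $(-2)$-homogeneous — this is the precise place where the singular gravity must be controlled. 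I would construct solutions of this profile system by a perturbation-around-the-heat-flow decomposition: set $U = e^{\Delta}u_0\big|_{t=1} + V$ (and analogously for the other fields), so that $V$, the magnetic perturbation, and the temperature perturbation solve an elliptic system with forcing built from the (smooth, decaying away from the origin) caloric extensions of the homogeneous data, and one seeks $(V,\ldots)$ in an $H^1$-type energy space.

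The core analytic step is the a priori estimate: one must show that any solution of the (one-parameter family of) profile systems obtained by inserting a coupling parameter $\sigma\in[0,1]$ in front of the nonlinear terms satisfies a uniform bound in $H^1$ (or the appropriate weighted/homogeneous Sobolev space) independent of $\sigma$. Here is where the blow-up argument enters: supposing no such uniform bound exists, one takes a sequence of solutions with norms tending to infinity, rescales them to have unit norm, and extracts a weak limit solving a limiting homogeneous system; one then derives a contradiction — typically a Liouville-type statement forcing the limit to vanish, contradicting the unit-norm normalization — using the energy identity together with the structural cancellations of the MHD-Boussinesq nonlinearities (the Lorentz force term $(b\cdot\nabla)b$ and the magnetic stretching $(b\cdot\nabla)u$ cancel in the combined $u$--$b$ energy, and the transported temperature contributes only the buoyancy coupling $\int \Theta\,\nabla(1/|y|)\cdot U$, which is estimated via Hardy-type and Hardy--Littlewood--Sobolev inequalities against the $L^6$/$\dot H^1$ norms). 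The decay and integrability of the caloric extensions of the $(-1)$-homogeneous data, and of $\nabla(1/|y|)$, must be quantified carefully so that all these couplings close.

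With the uniform a priori bound in hand, I would set up the Leray--Schauder theorem: define a compact solution operator on the energy space (compactness coming from local-in-space compact Sobolev embedding combined with the decay estimates that control the tails), verify it has no fixed points with the excluded large norm, and conclude existence of a fixed point at $\sigma=1$, i.e., a profile solution $(U,B,\Theta,P)$. Undoing the self-similar change of variables yields the self-similar solution $(u,b,\theta)$ of \eqref{E1.1}. Finally, the quantitative statements of the theorem — membership in $BC_w([0,\infty),\mathcal{L}^{3,\infty})$ and the difference estimates \eqref{T1} in $L^p$ for $2\le p\le 6$ and in $\dot H^1$ for the gradients — follow by translating the $H^1$/weighted bounds on the profiles back through the scaling $u(t)-e^{t\Delta}u_0 = \frac{1}{\sqrt t}V(\cdot/\sqrt t)$, using the Sobolev embedding $\dot H^1(\R^3)\hookrightarrow L^6$ and interpolation with $L^2$, and noting that weak continuity at $t=0$ reduces to weak-$*$ convergence of the rescaled profiles to $u_0$ in $\mathcal{L}^{3,\infty}$.

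I expect the main obstacle to be the blow-up/Liouville step in the presence of the singular buoyancy term: unlike the pure MHD case \cite{Y}, the temperature equation couples back into the momentum equation through $\Theta\,\nabla(1/|y|)$, which is borderline-critical for the Hardy--Littlewood--Sobolev scaling, so one must show the rescaled limiting temperature profile is controlled sharply enough (and its transport equation yields enough decay/regularity) to force the limit velocity--magnetic--temperature triple to be zero; handling the local singularity at the origin of $\nabla(1/|y|)$ together with the behavior at spatial infinity in the same energy estimate is the delicate point.
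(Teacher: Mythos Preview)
Your overall strategy matches the paper's: self-similar ansatz, perturbation around the caloric extension, blow-up argument for the a priori bound, Leray--Schauder, then rescale back. But the mechanism of the contradiction step is not what you describe, and you are missing the device that actually handles the temperature coupling. The contradiction is \emph{not} a Liouville statement forcing the rescaled limit $(\bar V,\bar E,\bar\Psi)$ to vanish. Rather, one tests the original equations with the solution itself, rescales, and passes to the limit to obtain an identity of the form $1+\text{(nonneg.)}=\text{(integral involving }\bar V,\bar E,U_0,B_0)$; separately, the weak limit solves a stationary MHD--Euler-type system, and testing \emph{that} system with $U_0,B_0$ (in the whole-space version) or invoking Lemma~\ref{L0} (pressure constant on $\partial\Omega$, in the bounded-domain version) shows the right-hand side is zero. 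The limit need not vanish.

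The temperature coupling is handled not by a single Hardy/HLS estimate but by a \emph{two-case split} according to whether the ratio $L_k/J_k$ of the velocity--magnetic $H^1$ norm to the temperature $H^1$ norm stays bounded or goes to infinity. In Case~1 (temperature comparable) the \emph{temperature} equation alone yields a contradiction after rescaling by $J_k^{-2}$; in Case~2 ($J_k/L_k\to 0$) the buoyancy term drops out of the rescaled momentum identity entirely, reducing to the pure MHD--Euler argument. Your proposed direct control of $\int\Theta\,\nabla(1/|y|)\cdot U$ is exactly the borderline case you worry about and does not close on its own --- the split is what makes it work. Finally, the paper does not work directly on $\R^3$: it introduces a cutoff $\rho_k(x)=\rho(kx)$ killing the singularity of $\nabla(1/|y|)$ at the origin, solves on balls $B_k$ with this regularized gravity, proves a $k$-independent a priori bound (Proposition~\ref{p3}), and only then passes $k\to\infty$. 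Without this regularization the operator $A_\rho$ is not obviously well-defined on $\mathcal H(\Omega)$.
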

\begin{rem}
	In Theorem \ref{T1.1}, we did not impose any restrictions on the size of the initial data, but correspondingly, we only discussed the existence of solutions without addressing uniqueness. It is worth noting that Theorem \ref{T1.1} yields a global-time forward self-similar solution, because $(u,b,\theta)\in BC_{w}([0,\infty),\mathcal{L}^{3,\infty}(\R^{3}))$ ensures that the constructed solution satisfies the initial condition in a weak sense.
\end{rem}
We will follow Leray's method to prove Theorem \ref{T1.1}, let us outline the strategy of the proof. In fact, the forward self-similar solutions of \eqref{E1.1} are determined by their value at any time moment $t>0$. For instance, choosing $\lambda=1/\sqrt{2t}$ in \eqref{scaling11} allows us to rewrite the forward self-similar solution as
\begin{equation*}\label{scaling2}
		\left\{
	\begin{aligned}
		u(x,t)&=\frac{1}{\sqrt{2t}}U\Big(\frac{x}{\sqrt{2t}}\Big),\\
		b(x,t)&=\frac{1}{\sqrt{2t}}B\Big(\frac{x}{\sqrt{2t}}\Big),\\\theta(x,t)&=\frac{1}{\sqrt{2t}}\Theta\Big(\frac{x}{\sqrt{2t}}\Big),\\
		p(x,t)&=\frac{1}{2t}P\Big(\frac{x}{\sqrt{2t}}\Big),
	\end{aligned}
	\right.
\end{equation*}
where $U(x)=u(x,1)$, $B(x)=b(x,1)$, $\Theta(x)=\theta(x,1)$ and $P(x)=p(x,1)$. Consequently, a time-independent profile $(U,B,\Theta)$ can be obtained, which solves the following Leray system for the MHD-Boussinesq equations \eqref{T1.1}:
\begin{equation}\label{ES1}
		\left\{
	\begin{aligned}
		&-\Delta U-U-x\cdot\nabla U+(U\cdot\nabla)U-(B\cdot\nabla)B+\nabla P=\Theta\nabla(|\cdot|^{-1}),\\&
		-\Delta B-B-x\cdot\nabla B+(U\cdot\nabla)B-(B\cdot\nabla)U=0,\\&
		-\Delta\Theta-\Theta-x\cdot\nabla\Theta+\nabla\cdot (\Theta U)=0,\\&
		{\rm div}\,U={\rm div}\,B=0.
	\end{aligned}
	\right.
\end{equation}
Setting
\begin{align*}\label{D1-1}
	U_{0}=e^{\Delta/2}u_{0},\ \  B_{0}=e^{\Delta/2}b_{0},\ \  \mbox{and}\ \  \Theta_{0}=e^{\Delta/2}\theta_{0}.
\end{align*}
We examine the difference$(V,E,\Psi)=(U-U_{0},B-B_{0},\Theta-\Theta_{0})$, which satisfies the perturbed Leray system (see \eqref{ES2}). Hence, by applying the scaling property, the problem \eqref{E1.1}-\eqref{E1.1-initial} is reduced to a fixed-point problem:
$$
(V,E,\Psi)=\lambda S(V,E,\Psi): \mathcal{B}\rightarrow \mathcal{B}, 
$$
where $\mathcal{B}$ is a suitable Banach space and $\lambda\in[0,1]$. Our main goal is to prove that $S$ has a fixed point, we need to verify: 

{\rm (i)} $S: \mathcal{B}\rightarrow\mathcal{B}$ is a continuous and compact mapping;

{\rm (ii)} there exists a constant $C$ such that
$$
\|x\|_{\mathcal{B}} \leq C,
$$
for all $x\in\mathcal{B}$ and $\lambda\in[0,1]$ satisfying $x=\lambda Sx$.

The main difficulty in this paper is to verify the second condition: $\textit{a prior estimate}$. Motivated by \cite{KT}, we employ a blow-up argument to establish the \textit{a priori estimate} in Sobolev spaces $H^{1}$. The presence of singular gravitational force introduces difficulties, which we overcome by utilizing an appropriate cut-off function. The Leray-Schauder theorem then ensures the existence of self-similar solutions to the perturbed Leray system. Finally, applying the scaling property and embedding theorem, we derive a global forward self-similar solution to system \eqref{E1.1}.

The remainder of this paper is structured as follows. In Section 2, we introduce some basic function spaces, notations and useful lemmas. Section 3 is the core of the paper, in which we establish the existence of forward self-similar solutions to perturbed Leray system by using the blow-up argument. Section 4 is devoted to prove Theorem \ref{T1.1}. Finally, Appendix A reviews some fundamental properties of Lorentz spaces.

\section{Preliminaries}
\subsection{Functional spaces and notations}
Let $\Omega$ be an open bounded Lipschitz domain or $\R^{3}$. For any $p\in[1,\infty)$, let $L^{p}(\Omega)$(or $L^{\infty}(\Omega)$) denote the space of real functions defined on $\Omega$ with the $p$-th power absolutely integrable (or essentially bounded real functions) for the Lebesgue measure. For convenience, we write $\|\cdot\|_{L^{p}(\mathbb{R}^{3})}=\|\cdot\|_{p}$  when $\Omega=\mathbb{R}^{3}$. The Sobolev space $W^{1,2}(\Omega)$  consists of functions in $L^{2}(\Omega)$ with weak derivatives also in $L^{2}(\Omega)$. Let $C_{0}^{\infty}(\Omega)$ denote the space of smooth functions with compact support in $\Omega$. Then, we can define:
\begin{align*}
W_{0}^{1,2}(\Omega)=\overline{C_{0}^{\infty}(\Omega)}^{W^{1,2}(\Omega)}:=\mbox{the closure of} \  C_{0}^{\infty}(\Omega) \ \mbox{in}\  W^{1,2}(\Omega).
\end{align*}
As usual, we write $W^{1,2}(\Omega)=H^{1}(\Omega)$ and $W_{0}^{1,2}(\Omega)=H^{1}_{0}(\Omega)$. It is well know that when $\Omega=\R^{3}$, $H_{0}^{1}(\R^{3})=H^{1}(\R^{3})$. In this paper, bold symbols are used to denote function spaces of three-dimensional vector-valued functions. We define several commonly used spaces in the study of three-dimensional incompressible fluids:
\begin{align*}
	\mathbf{C}&_{0,\sigma}^{\infty}(\Omega):=\{\varphi\in \mathbf{C}_{0}^{\infty}(\Omega):{\rm div}\,\varphi=0\};\\
	\mathbf{L}_{\sigma}^{p}(\Omega)&=\overline{\mathbf{C}_{0,\sigma}^{\infty}(\Omega)}^{L^{p}(\Omega)},\ \ \mathbf{H}_{0,\sigma}^{1}(\Omega)=\overline{\mathbf{C}_{0,\sigma}^{\infty}(\Omega)}^{H^{1}(\Omega)}.
\end{align*}
In particular, the space $\mathbf{L}_{\sigma}^{\infty}(\Omega)$ is defined by:
\begin{equation*}
	\mathbf{L}_{\sigma}^{\infty}(\Omega)=\Big\{f\in \mathbf{L}^{\infty}(\Omega)\Big|\int_{\Omega}f\cdot\nabla\varphi dx=0,\ \mbox{for all}\ \varphi\in D^{1,1}(\Omega)\Big\},
\end{equation*}
where $D^{1,q}(\Omega)$ is the homogeneous Sobolev space of the from
$$
D^{1,q}(\Omega)=\{\varphi\in L_{loc}^{1}(\Omega)|\nabla\varphi\in \mathbf{L}^{q}(\Omega)\}.
$$
Next, we introduce a trilinear form as follow:
\begin{equation}\label{tri}
f(u,v,w)=\sum_{i,j=1}^{3}\int_{\Omega}u_{i}\partial_{i}v_{j}w_{j}dx=\int_{\Omega}u\cdot\nabla v\cdot wdx.
\end{equation}
By H\"{o}lder inequality, we obtain
$$
|f(u,v,w)|\leq \|u\|_{L^{p}(\Omega)}\|\nabla v\|_{L^{2}(\Omega)}\|u\|_{L^{q}(\Omega)}
$$
where $\frac{1}{p}+\frac{1}{q}=\frac{1}{2}$. Hence, $f$ is a trilinear continuous from on $\mathbf{L}^{p}(\Omega)\times \mathbf{H}^{1}(\Omega)\times \mathbf{L}^{q}(\Omega)$. Using straightforward calculations and embedding theorems, we provide the following results without detailed proof.
\begin{prop}\label{p11}
	Let $f$ be the trilinear form defined in \eqref{tri}, then
	
	{\rm (i)} for all $u\in \mathbf{L}_{\sigma}^{p}(\Omega)$, $p\in[3,\infty]$, and $v\in \mathbf{H}_{0,\sigma}^{1}(\Omega)$, we have
\begin{equation*}
	f(u,v,v)=0.
\end{equation*}
	{\rm (ii)} for all $u\in \mathbf{L}_{\sigma}^{p}(\Omega)$, $p\in[3,\infty]$, and $v,w\in \mathbf{H}_{0,\sigma}^{1}(\Omega)$, we have
\begin{equation*}
	f(u,v,w)=-f(u,w,v).
\end{equation*}
\end{prop}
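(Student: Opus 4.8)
The plan is to prove part (i) by a density argument, starting from the case of smooth, compactly supported, solenoidal fields and then removing the smoothness of $v$ and of $u$ in turn; part (ii) will then follow from (i) by polarization. For $u,v\in\mathbf{C}_{0,\sigma}^{\infty}(\Omega)$ one computes directly, integrating by parts (the boundary terms vanish because $v$ has compact support in $\Omega$) and using ${\rm div}\,u=0$,
\begin{equation*}
	f(u,v,v)=\sum_{i,j=1}^{3}\int_{\Omega}u_{i}\,\partial_{i}v_{j}\,v_{j}\,dx=\frac12\int_{\Omega}u\cdot\nabla(|v|^{2})\,dx=-\frac12\int_{\Omega}({\rm div}\,u)\,|v|^{2}\,dx=0 .
\end{equation*}
Next, for fixed $u\in\mathbf{C}_{0,\sigma}^{\infty}(\Omega)$ and $v\in\mathbf{H}_{0,\sigma}^{1}(\Omega)$, pick $v_{n}\in\mathbf{C}_{0,\sigma}^{\infty}(\Omega)$ with $v_{n}\to v$ in $H^{1}$; since $p\in[3,\infty]$ the exponent $q$ defined by $\frac1p+\frac1q=\frac12$ lies in $[2,6]$, so the Sobolev embedding $\mathbf{H}_{0,\sigma}^{1}(\Omega)\hookrightarrow\mathbf{L}^{q}(\Omega)$ holds and, by the continuity of $f$ on $\mathbf{L}^{p}\times\mathbf{H}^{1}\times\mathbf{L}^{q}$ recalled before the proposition, $f(u,v_{n},v_{n})\to f(u,v,v)$. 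Hence $f(u,v,v)=0$ for every $u\in\mathbf{C}_{0,\sigma}^{\infty}(\Omega)$ and $v\in\mathbf{H}_{0,\sigma}^{1}(\Omega)$.

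To reach general $u\in\mathbf{L}_{\sigma}^{p}(\Omega)$: if $p<\infty$, approximate $u$ by $u_{n}\in\mathbf{C}_{0,\sigma}^{\infty}(\Omega)$ in $\mathbf{L}^{p}$ and pass to the limit once more using the continuity of $f$, obtaining $f(u,v,v)=\lim_{n}f(u_{n},v,v)=0$. The endpoint $p=\infty$ must be handled separately, because $\mathbf{C}_{0,\sigma}^{\infty}(\Omega)$ is \emph{not} dense in $\mathbf{L}_{\sigma}^{\infty}(\Omega)$; here one invokes the weak divergence-free condition built into the definition of $\mathbf{L}_{\sigma}^{\infty}(\Omega)$. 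Set $\psi:=\tfrac12|v|^{2}$. Since $v\in\mathbf{H}_{0,\sigma}^{1}(\Omega)\hookrightarrow\mathbf{L}^{6}(\Omega)\subset\mathbf{L}^{2}_{loc}(\Omega)$ we have $\psi\in L^{1}_{loc}(\Omega)$, and $\nabla\psi=(v\cdot\nabla)v\in\mathbf{L}^{1}(\Omega)$ because $v,\nabla v\in\mathbf{L}^{2}(\Omega)$; thus $\psi\in D^{1,1}(\Omega)$. The identity $f(u,v,v)=\int_{\Omega}u\cdot\nabla\psi\,dx$ proved above for smooth $v$ extends, for fixed $u\in\mathbf{L}_{\sigma}^{\infty}(\Omega)$, to all $v\in\mathbf{H}_{0,\sigma}^{1}(\Omega)$: the left side is continuous in $v\in\mathbf{H}^{1}$ by the $\mathbf{L}^{\infty}\times\mathbf{H}^{1}\times\mathbf{L}^{2}$ bound for $f$, and for $v_{n}\to v$ in $\mathbf{H}^{1}$ one has $\nabla(\tfrac12|v_{n}|^{2})\to\nabla(\tfrac12|v|^{2})$ in $\mathbf{L}^{1}$, so the right side converges as well. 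Therefore $f(u,v,v)=\int_{\Omega}u\cdot\nabla\psi\,dx=0$ by the definition of $\mathbf{L}_{\sigma}^{\infty}(\Omega)$, completing part (i).

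Part (ii) is then immediate by polarization: since $\mathbf{H}_{0,\sigma}^{1}(\Omega)$ is a linear space, $v+w\in\mathbf{H}_{0,\sigma}^{1}(\Omega)$, so expanding $0=f(u,v+w,v+w)$ and cancelling $f(u,v,v)=f(u,w,w)=0$ yields $f(u,v,w)=-f(u,w,v)$. I expect the only real subtlety to be the $p=\infty$ endpoint, where density fails and one must instead verify that $\tfrac12|v|^{2}\in D^{1,1}(\Omega)$ and push the integration-by-parts identity through a limit in $v$ alone; the rest is routine Hölder/Sobolev bookkeeping.
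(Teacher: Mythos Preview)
Your proof is correct. The paper itself does not prove this proposition; it simply states that the result follows from ``straightforward calculations and embedding theorems'' and omits all details. Your argument supplies those details carefully, including the endpoint case $p=\infty$, where density of $\mathbf{C}_{0,\sigma}^{\infty}$ in $\mathbf{L}_{\sigma}^{\infty}$ fails and one must instead invoke the defining property of $\mathbf{L}_{\sigma}^{\infty}(\Omega)$ against the scalar test function $\psi=\tfrac12|v|^{2}\in D^{1,1}(\Omega)$. This is precisely why the paper defines $\mathbf{L}_{\sigma}^{\infty}(\Omega)$ via duality with $D^{1,1}(\Omega)$ rather than by closure, so your handling of that endpoint is the intended one.

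One small slip: you write $\nabla\psi=(v\cdot\nabla)v$, but the $k$-th component of $\nabla\psi$ is $\sum_j v_j\,\partial_k v_j$, whereas $((v\cdot\nabla)v)_k=\sum_i v_i\,\partial_i v_k$; these differ in general. The conclusion is unaffected, since in either case the pointwise bound $|\nabla\psi|\le |v|\,|\nabla v|\in L^{1}(\Omega)$ gives $\psi\in D^{1,1}(\Omega)$, and the identity $f(u,v,v)=\int_{\Omega}u\cdot\nabla\psi\,dx$ is just the chain rule $\partial_i(\tfrac12|v|^2)=v_j\,\partial_i v_j$ for $H^1$ functions.
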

Moreover, for convenience, we use calligraphic symbols to denote function spaces for 7-dimensional vector-valued functions. For example
\begin{align*}
	\begin{split}
		\mathcal{L}^{p}(\Omega)=\mathbf{L}^{p}(\Omega)\times\mathbf{L}^{p}(\Omega)\times L^{p}(\Omega),\\ \mathcal{C}^{\infty}_{c,\sigma}(\Omega)=\mathbf{C}^{\infty}_{c,\sigma}(\Omega)\times \mathbf{C}^{\infty}_{c,\sigma}(\Omega)\times C^{\infty}_{c}(\Omega).
	\end{split}
\end{align*}
In particular, we define the following Hilbert space
$$
\mathcal{H}(\Omega)=\mathbf{H}_{0,\sigma}^{1}(\Omega)\times \mathbf{H}_{0,\sigma}^{1}(\Omega)\times H^{1}_{0}(\Omega).
$$
Finally, we illustrate the space $BC_{w}([0,\infty),\mathcal{L}^{3,\infty}(\R^{3}))$, it is bounded and weak-$\ast$ continuous in $\mathcal{L}^{3,\infty}(\R^{3})$ with respect to time variable $t$.

\noindent\textbf{Notation:} 
In this paper, $B_{r}$ denotes an open ball of radius $r$ centered at origin in $\R^{3}$. Let $X$ be a Banach space, $X'$ represents the dual space of $X$, and the symbol $\langle\cdot,\cdot\rangle$ denotes the pairing of $X$ and $X'$. 

We denoted by $\M^{3}$ the space of all real $3\times3$ matrices. Then, If $A=(A_{ij}),B=(B_{ij})\in\M^{3}$, we write
$$
A:B=A_{ij}B_{ij}\ \ \mbox{and} \ \ |A|=(A:A)^{\frac{1}{2}},
$$
 where we use the Einstein summation convention. And, for vectors $u,v\in\R^{3}$, we denote $u\otimes v=(u_{i}v_{j})\in\M^{3}$.

\subsection{Perturbed Leray system}
 This subsection aims to introduce the perturbed Leray system of \eqref{E1.1} and define its weak solutions. We begin by recalling the definition of the heat kernel, which is the fundamental solution to the heat equation:
 $$
 G_{t}(x)=\frac{1}{(4\pi t)^{\frac{3}{2}}}e^{-\frac{|x|^{2}}{4t}},\ \ \ (x,t)\in\R^{3}\times(0,+\infty).
 $$
 Using the solution to the Cauchy problem of heat equation given as convolutions of the heat kernel with $u_{0}$, $b_{0}$ and $\theta_{0}$, we have
 \begin{equation}\label{LI}
 	\begin{cases}
 		u_{L}(x,t)=e^{t\Delta}u_{0}=G_{t}\ast u_{0},\\
 		b_{L}(x,t)=e^{t\Delta}b_{0}=G_{t}\ast b_{0},\\
 		\theta_{L}(x,t)=e^{t\Delta}\theta_{0}=G_{t}\ast \theta_{0}.
 	\end{cases}
 \end{equation} 
 If we set 
 \begin{align}\label{D1}
 	U_{0}=e^{\Delta/2}u_{0},\ \  B_{0}=e^{\Delta/2}b_{0},\ \  \mbox{and}\ \  \Theta_{0}=e^{\Delta/2}\theta_{0}.
 \end{align}
 By our assumption \eqref{initial scale} and $\mbox{div}\,u_{0}=\mbox{div}\,b_{0}=0$, we can also get the self-similar profiles $(U_{0},B_{0},\Theta_{0})$, which solve the following linear system
 \begin{equation*}\label{ES21}
 	\left\{
 		\begin{aligned}
 		&\Delta U_{0}+U_{0}+x\cdot\nabla U_{0}=0,\\&
 		\Delta B_{0}+B_{0}+x\cdot\nabla B_{0}=0,\\&
 		\Delta\Theta_{0}+\Theta_{0}+x\cdot\nabla\Theta_{0}=0,\\&
 		{\rm div}\,U_{0}={\rm div}\,B_{0}=0.
 	\end{aligned}
 		\right.
 \end{equation*}
 In order to studying system \eqref{ES1}, we consider the difference
 $$
 V=U-U_{0},\ \ E=B-B_{0},\ \  \mbox{and}\ \ \Psi=\Theta-\Theta_{0},
 $$
 which satisfy the following elliptic system, i.e., the perturbed Leray system of MHD-Boussinesq equations \eqref{E1.1}
 \begin{align}\label{ES2}
 	\left\{
 	\begin{aligned}
 		&-\Delta V-V-x\cdot\nabla V+(V+U_{0})\cdot\nabla(V+U_{0})-(E+B_{0})\cdot\nabla(E+B_{0})+\nabla P\\&\qquad\qquad\qquad\qquad\qquad\qquad\qquad\qquad\qquad\qquad\qquad\qquad=(\Psi+\Theta_{0})\nabla(|\cdot|^{-1}),\\&
 		-\Delta E-E-x\cdot\nabla E+(V+U_{0})\cdot\nabla (E+B_{0})-(E+B_{0})\cdot\nabla(V+U_{0})=0,\\&
 		-\Delta\Psi-\Psi-x\cdot\nabla\Psi+\nabla\cdot\big((\Psi+\Theta_{0})(V+U_{0})\big)=0,\\&
 		{\rm div}\,U={\rm div}\,B=0.
 	\end{aligned}
 	\right.
 \end{align}
 
	\begin{defn}(Weak solution of the perturbed Leray system)\label{D-0}
	The triple $(V,E,\Psi)$ is called a weak solution of \eqref{ES2} in $\R^{3}$, if
	
	{\rm (i)}  $(V,E,\Psi)\in \mathcal{H}_{loc}(\R^{3})$, $\nabla\cdot V=\nabla\cdot E=0$;
	
	{\rm (ii)}
\begin{align*}
	\left\{
	\begin{aligned}
			\int_{\R^{3}}\nabla V:\nabla\varphi dx&+\int_{\R^{3}}(V+U_{0})\cdot\nabla(V+U_{0})\varphi dx\\&=\int_{\R^{3}}\Big(V+x\cdot \nabla V+(E+B_{0})\cdot\nabla(E+B_{0})+(\Psi+\Theta_{0})\nabla(|\cdot|^{-1})\Big)\varphi dx,\\
			\int_{\R^{3}}\nabla E:\nabla\phi dx&+\int_{\R^{3}}(V+U_{0})\cdot\nabla(E+B_{0})\phi dx\\&=\int_{\R^{3}}\Big(E+x\cdot \nabla E+(E+B_{0})\cdot\nabla(V+U_{0})\Big)\phi dx,\\
			\int_{\R^{3}}\nabla \Psi:\nabla\zeta dx&=\int_{\R^{3}}\Big(\Psi+x\cdot\nabla\Psi\Big)\zeta dx+\int_{\R^{3}}\Big((\Psi+\Theta_{0})(V+U_{0})\Big)\nabla\zeta dx,
		\end{aligned}
	\right.
\end{align*}
	for all $\varphi,\phi\in \mathbf{C}_{0,\sigma}^{\infty}(\R^{3})$ and  $\zeta\in C_{0}^{\infty}(\R^{3})$.
\end{defn}
\begin{rem}
	Since the pressure term vanishes in Definition \ref{D-0}, we omit its discussion. In fact, by applying the divergence operator to the first equation in system \eqref{E1.1}, a pressure term can be determined by a suitable choice.
	
\end{rem}
The following lemma investigates some properties regarding self-similar solutions to the heat equation.
\begin{prop}\label{p1}
	Let  $(u_{0},b_{0},\theta_{0})\in \mathcal{L}_{loc}^{\infty}(\R^{3}\backslash\{0\})$ be (-1)-homogeneous. Then we have 
	$$
	(u_{I}, b_{I},\theta_{I})\in BC_{w}([0,+\infty),\mathcal{L}^{3,\infty}(\R^{3})).
	$$
Furthermore, for all $x\in\R^{3}$, there exist a constant $C>0$ independent on $x$, such that
\begin{align}\label{decay estimate}
	\begin{split}
		|U_{0}|+|B_{0}|+|\Theta_{0}|\leq C(1+|x|)^{-1},\\
		|\nabla U_{0}|+|\nabla B_{0}|+|\nabla\Theta_{0}|\leq C(1+|x|)^{-1},
	\end{split}
\end{align}
where $u_{I},b_{I},\theta_{I}$ and $U_{0},B_{0},\Theta_{0}$ are defined in \eqref{LI} and \eqref{D1}, respectively.
\end{prop}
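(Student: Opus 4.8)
The plan is to establish the two assertions separately, both relying on the explicit representation of the linear evolution as convolution with the heat kernel together with the $(-1)$-homogeneity of the data.

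For the first assertion, I would begin by showing that a $(-1)$-homogeneous function $f_0\in L^\infty_{loc}(\R^3\setminus\{0\})$ belongs to $\mathcal{L}^{3,\infty}(\R^3)$ (weak-$L^3$). Indeed, if $f_0(x)=|x|^{-1}g(x/|x|)$ with $g\in L^\infty(\mathbb S^2)$, then for any $\alpha>0$ the superlevel set $\{|f_0|>\alpha\}$ is contained in $\{|x|<\|g\|_{L^\infty(\mathbb S^2)}/\alpha\}$, so its Lebesgue measure is $\le C\alpha^{-3}$; this is precisely the weak-$L^3$ bound, and thus $f_0\in L^{3,\infty}$ with norm controlled by $\|g\|_{L^\infty(\mathbb S^2)}$. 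Since $L^{3,\infty}$ is the dual of the Lorentz space $L^{3/2,1}$ (reviewed in Appendix A), it suffices to prove that $t\mapsto e^{t\Delta}f_0$ is bounded and weak-$\ast$ continuous into $L^{3,\infty}$. Boundedness follows from the fact that the heat semigroup is bounded on $L^{3,\infty}$ uniformly in $t$ (real interpolation between $L^1\to L^1$ and $L^\infty\to L^\infty$, or directly from Young's inequality in Lorentz spaces: $\|G_t\ast f_0\|_{3,\infty}\le \|G_t\|_1\|f_0\|_{3,\infty}=\|f_0\|_{3,\infty}$). For $t=0$ the value is $f_0$ itself. Weak-$\ast$ continuity at each $t_0>0$ is standard from strong continuity of the semigroup on, say, $L^3+L^\infty$ tested against $L^{3/2,1}$; the only delicate point is weak-$\ast$ continuity as $t\downarrow 0$, which I would obtain by testing against a dense class of $\varphi\in C_0^\infty(\R^3)$, writing $\langle e^{t\Delta}f_0-f_0,\varphi\rangle=\langle f_0, e^{t\Delta}\varphi-\varphi\rangle$, splitting the $x$-integral into a small ball around the origin (where $|f_0|\lesssim|x|^{-1}$ is integrable against the bounded $e^{t\Delta}\varphi-\varphi$) and its complement (where $e^{t\Delta}\varphi\to\varphi$ uniformly), and letting $t\downarrow0$.

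For the second assertion, the key observation is that by the scaling invariance \eqref{initial scale} and the fact that the heat semigroup commutes with the parabolic scaling at time $t=1/2$, the profiles $U_0=e^{\Delta/2}u_0$, $B_0$, $\Theta_0$ are themselves $(-1)$-homogeneous \emph{away from the origin only in a self-similar sense}; more precisely, $u_L(x,t)=\tfrac{1}{\sqrt{2t}}U_0(x/\sqrt{2t})$, so pointwise bounds on $U_0$ are equivalent to bounds on $u_L(\cdot,1)$. I would estimate $U_0(x)=\int_{\R^3}G_{1/2}(x-y)u_0(y)\,dy$ directly. Split the integral into the region $|y|\le|x|/2$, the region $|y|\ge 2|x|$, and the annulus $|x|/2\le|y|\le2|x|$. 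On the first region $|x-y|\ge|x|/2$ so the Gaussian $G_{1/2}(x-y)\le C e^{-|x|^2/16}$ decays rapidly, while $\int_{|y|\le|x|/2}|u_0(y)|\,dy\le C|x|^{2}$ by the $|y|^{-1}$ bound on $u_0$; the product is $\le C(1+|x|)^{-1}$. On the second region the Gaussian again forces $|x-y|\gtrsim|y|$ so one gets exponential decay in $|y|$ and the integral $\int_{|y|\ge 2|x|}e^{-c|y|^2}|y|^{-1}dy$ is bounded. On the annulus $|u_0(y)|\le C|x|^{-1}$ and $\int G_{1/2}(x-y)\,dy\le1$, giving $\le C|x|^{-1}$. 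Combining the three pieces and noting $|U_0|\le\|u_0\|_{L^\infty(\mathbb S^2)}$-type bound for small $|x|$ (again from $\|G_{1/2}\ast u_0\|_\infty\lesssim\||x|^{-1}\ast G_{1/2}\|_\infty<\infty$ near the origin) yields $|U_0(x)|\le C(1+|x|)^{-1}$; the same argument applies verbatim to $B_0$ and $\Theta_0$. For the gradient estimate I would differentiate under the integral sign, $\nabla U_0(x)=\int\nabla G_{1/2}(x-y)u_0(y)\,dy$, and repeat the same three-region decomposition; the extra factor $|x-y|$ from $\nabla G_{1/2}$ is harmless since it is absorbed by the Gaussian, and on the annulus one uses $\int|\nabla G_{1/2}(x-y)|\,dy\le C$.

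The main obstacle I anticipate is the weak-$\ast$ continuity at $t=0$: the data are only locally bounded and merely $L^{3,\infty}$ globally, so strong continuity fails and one must argue purely by duality against $L^{3/2,1}$ (or its dense subspace $C_0^\infty$), carefully isolating the singularity at the origin. Everything else is a routine heat-kernel splitting argument, and since these estimates are exactly of the type appearing in \cite{JS,KT,Y,LBGK}, the proposition can reasonably be stated with the computation only sketched.
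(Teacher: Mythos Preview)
Your proposal is correct. The paper's own proof is a one-line reduction: the hypothesis gives the pointwise bound $|(u_0,b_0,\theta_0)|\le C|x|^{-1}$, and then everything is delegated to Lemma~\ref{Ap1} in the appendix, which treats the single model function $f(x)=|x|^{-1}$. Your argument for the first assertion (weak-$L^3$ membership via superlevel sets, Young in Lorentz for boundedness, duality plus density for weak-$\ast$ continuity at $t=0$) is essentially the same as the proof of Lemma~\ref{Ap1}, just carried out directly for $f_0$ rather than after the reduction.

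The genuine methodological difference is in the decay estimate. You use a classical three-region splitting of the convolution ($|y|\le|x|/2$, $|y|\ge2|x|$, annulus), which is elementary and self-contained. The paper instead bounds $\|\bar F\|_\infty$ and $\|\,|\cdot|\,\bar F\|_\infty$ separately: the first via Hunt's inequality $\|G_{1/2}\|_{L^{3/2,1}}\|f\|_{L^{3,\infty}}$, the second by writing $|x|\le|x-y|+|y|$ inside the integral and observing that $|x-y|G_{1/2}(x-y)$ still lies in $L^{3/2,1}$ while $|y|f(y)$ is bounded. The paper's route is shorter once the Lorentz machinery of Appendix~A is in place; yours avoids that machinery entirely and would work even without the appendix. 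Both give the same bound, and both extend to the gradient the same way (replace $G_{1/2}$ by $\nabla G_{1/2}$).
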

\begin{proof}
Our assumption of $(u_{0},b_{0},\theta_{0})$ means that
$$
|(u_{0},b_{0},\theta_{0})|\leq \frac{C}{|x|},
$$
where the constant $C>0$ independent on $x$.
Thus, the conclusion is proved directly by Lemma \ref{Ap1}.
\end{proof}
Next, we present a critical lemma, which is important for proving $\textit{a prior estimate}$ in Section 3.
\begin{lem}\label{L0}
Let $\Omega$ be a connected domain in $\R^{3}$ with Lipschitz boundary, the function $v,b\in\mathbf{H}^{1}_{0,\sigma}(\Omega)$ and $p\in D^{1,\frac{3}{2}}(\Omega)\cap L^{3}(\Omega)$ satisfy the following system
\begin{equation}\label{L1}
\begin{cases}
(v\cdot\nabla) v-(b\cdot\nabla) b+\nabla p=0\ \ \ \ &\mbox{in}\ \ \Omega,\\
(v\cdot\nabla) b-(b\cdot\nabla) v=0\ \ \ \ &\mbox{in}\ \ \Omega,\\
{\rm div}\,v={\rm div}\,b=0\ \ \ \ \ &\mbox{in}\ \ \Omega,\\
v=b=0 \ \ \ \ &\mbox{on}\ \ \partial\Omega.
\end{cases}
\end{equation}
Then,
$$
\exists\ \  c\in\R \ \mbox{such that}\ p(x)=c\ \mbox{for $\mathfrak{H}^{2}$-almost all} \ x\in\partial\Omega,
$$
where $\mathfrak{H}^{m}$ denoted by the $m$-dimensional Hausdorff measure.
\end{lem}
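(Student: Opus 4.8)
The plan is to adapt the classical Bernoulli-law argument for the steady Navier–Stokes system (originally due to Amick, and used in the Korobkov–Pileckas–Russo line of work) to the MHD setting. The key observation is that the magnetic field contributes an extra force $-(b\cdot\nabla)b$ which, thanks to $\operatorname{div} b = 0$, can be written in divergence form as $-\operatorname{div}(b\otimes b)$, and—crucially—the second equation of \eqref{L1} forces the "magnetic Bernoulli defect" to vanish in a way that lets us absorb this term. Concretely, the first step is to derive a pointwise identity: from $(v\cdot\nabla)v = \tfrac12\nabla|v|^2 - v\times\operatorname{curl} v$ and a similar manipulation for the $b$-terms, together with the constraint equation $(v\cdot\nabla)b = (b\cdot\nabla)v$, I want to show that the "total head pressure" $\Phi := p + \tfrac12|v|^2 - \tfrac12|b|^2$ satisfies an elliptic equation of the form $\Delta\Phi = \operatorname{div} F$ with $F$ controlled by quantities in $L^{3/2}_{loc}$ and $L^{3}_{loc}$, so that $\Phi \in W^{1,3/2}_{loc}\cap L^{3}_{loc}$ (matching the regularity of $p$, since $v,b\in H^1$ give $|v|^2,|b|^2\in L^3_{loc}$ by Sobolev embedding). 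Actually the cleaner route, following Korobkov–Tsai \cite{KT}, is to work directly with the weak formulation rather than a pointwise PDE for $\Phi$.

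The second and main step is the boundary trace argument. Because $v,b\in \mathbf{H}^1_{0,\sigma}(\Omega)$, their traces on $\partial\Omega$ vanish, so on the boundary $\Phi = p$ in the trace sense. The goal is to show $p$ is $\mathfrak H^2$-a.e. constant on $\partial\Omega$. Following the method of \cite{KT} (and ultimately Kapitanskii–Pileckas / Amick), one tests the momentum equation against a vector field of the form $\nabla h$ where $h$ solves a suitable Neumann problem, or more robustly one uses the Morse–Sard-type theorem for Sobolev functions: since $\Phi\in W^{1,3/2}(\Omega)$ with the extra structure, almost every level set of $\Phi$ consists of finitely many $C^1$ curves, and along each such level curve one integrates the equation to show $\Phi$ cannot vary. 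The essential point is that $\partial\Omega$ is itself (a.e.) a level set in the trace sense because $v=b=0$ there: one approximates $\partial\Omega$ from inside by level sets $\{\Phi = t\}$ and passes to the limit. Concretely: pick a point $x_0\in\partial\Omega$ that is a Lebesgue point of the trace of $\Phi$; suppose for contradiction the trace is non-constant, so there are $x_0, x_1\in\partial\Omega$ with $\Phi(x_0)\neq\Phi(x_1)$; then a connecting-through-the-interior argument using the structure of level sets of $\Phi$ and the Coarea formula forces a contradiction with the energy bound $\nabla v,\nabla b,\nabla\Phi$ being integrable.

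The hardest part will be handling the interaction of the magnetic field with the Bernoulli structure: in pure Navier–Stokes, $\Phi = p+\tfrac12|v|^2$ and one uses $v\cdot\nabla\Phi = \nu\Delta\Phi\cdot(\ldots)$ cleanly; here the second equation of \eqref{L1} must be invoked to control cross terms like $b\cdot\nabla(b\cdot v)$ and to ensure that $b\cdot\nabla\Phi$ and $v\cdot\nabla\Phi$ behave well along level sets. I expect that contracting the second equation of \eqref{L1} with $v$ and with $b$ yields the identities $v\cdot\nabla(\tfrac12|b|^2) = b\cdot\nabla(v\cdot b)$ and $b\cdot\nabla(\tfrac12|v|^2) = ?$, and the right combination shows that $b$ is "tangent" to the level sets of $\Phi$ in a suitable sense, so the magnetic term does no net work along a level curve—this is the structural miracle that makes the MHD case reduce to the Navier–Stokes case. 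Once that reduction is in place, the rest is the now-standard level-set/Coarea machinery of \cite{KT}, and one obtains the existence of $c\in\mathbb R$ with $p\equiv c$ $\mathfrak H^2$-a.e. on $\partial\Omega$.
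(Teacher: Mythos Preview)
The paper does not actually prove this lemma: it states in Remark~2.2 that the proof can be adapted from the Euler-equation results of Amick \cite{Amick} and Amirat--Bresch--Lemoine--Simon \cite{ABLS} ``with minor modifications,'' and leaves it at that. So there is no detailed argument in the paper to compare against; the comparison is with the cited references.

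Your proposal is substantially heavier than what those references do, and it contains a misdirection. You set up the total head $\Phi=p+\tfrac12|v|^2-\tfrac12|b|^2$ and then worry about a ``structural miracle'' coming from the second equation $(v\cdot\nabla)b=(b\cdot\nabla)v$ that would make $b$ tangent to level sets of $\Phi$, so that the Korobkov--Pileckas--Russo level-set/Morse--Sard machinery can run. You leave this step as an expectation rather than a verification, and in fact the transport identity $v\cdot\nabla\Phi=0$ does \emph{not} hold for the inviscid MHD system in general (one computes $v\cdot\nabla\Phi=-v\cdot(b\times\operatorname{curl}b)$, and the second equation only gives $\operatorname{curl}(v\times b)=0$, which does not kill this term). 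So the route you sketch has a genuine gap at exactly the point you flag as uncertain.

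The good news is that none of this is needed. The argument in \cite{Amick,ABLS} for the Euler equation does not rely on a Bernoulli transport law along level sets; it uses only that the nonlinear term $(v\cdot\nabla)v=\operatorname{div}(v\otimes v)$ is the divergence of a tensor that vanishes on $\partial\Omega$ in the trace sense (because $v\in\mathbf H^1_0$ implies $v\otimes v\in W^{1,3/2}_0$). In the present system the magnetic contribution $(b\cdot\nabla)b=\operatorname{div}(b\otimes b)$ has exactly the same structure, since $b\in\mathbf H^1_0$ gives $b\otimes b\in W^{1,3/2}_0$ as well. Hence the first equation alone, together with the boundary condition $v=b=0$, is enough: the Amick/ABLS argument applies verbatim with $v\otimes v$ replaced by $v\otimes v-b\otimes b$, and the second equation of \eqref{L1} is never used in the proof of this lemma. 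The ``minor modifications'' the paper alludes to are precisely this one-line replacement. Your instinct that the magnetic term should be harmless was correct, but the mechanism is the zero trace of $b\otimes b$, not any cancellation coming from the induction equation.
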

\begin{rem}
Note that system \eqref{L1} has a similar structure to the stationary Euler equation. The proof of this lemma can be adapted from the results for stationary Euler equations \cite{Amick,ABLS}, with minor modifications.
	
\end{rem} 

At the end of this section, we present the Leray-Schauder theorem in a special case. Since we do not use the method of evolution equations in \cite{JS}, the nonlinear version of the Leray-Schauder theorem is not required.

\begin{thm}(\cite{GT0})\label{T2.1}
	Let $S$ be a compact mapping of a Banach space $X$ into itself, and suppose there exists a constant $M$ such that
	$$
	\|x\|_{X}<M
	$$
	for all $x\in X$ and $\lambda\in[0,1]$ satisfying $x=\lambda Sx$. Then $S$ has a fixed point.
\end{thm}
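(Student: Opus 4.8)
The plan is to derive this special case from Schauder's fixed point theorem through a radial retraction onto the closed ball of radius $M$, which is the classical argument behind the Leray--Schauder principle. Throughout, let $K = \{x \in X : \|x\|_X \le M\}$ denote the closed ball centered at the origin; it is a nonempty, closed, bounded, convex subset of $X$. The obstruction to applying Schauder directly to $S$ is that $S$ need not map $K$ into itself, and the whole point of the a priori bound is to repair exactly this defect.

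First I would introduce the radial retraction $r : X \to K$ given by $r(x) = x$ for $\|x\|_X \le M$ and $r(x) = M x / \|x\|_X$ for $\|x\|_X > M$; this map is continuous and satisfies $\|r(x)\|_X \le M$ for all $x$. Setting $T = r \circ S$, the composition is continuous, maps $K$ into $K$ by construction, and is compact: since $S$ is compact, $S(K)$ is relatively compact, and the continuous image $T(K) = r(S(K))$ is therefore relatively compact as well. Because $K$ is closed, bounded and convex and $T : K \to K$ is continuous with relatively compact image, Schauder's fixed point theorem yields a point $x_* \in K$ with $r(S(x_*)) = x_*$.

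It then remains to promote this fixed point of $T$ into a genuine fixed point of $S$, and this is where the a priori estimate enters. I would argue by contradiction: suppose $\|S(x_*)\|_X > M$, so that the retraction is active and $x_* = r(S(x_*)) = M S(x_*)/\|S(x_*)\|_X$. Writing $\lambda = M/\|S(x_*)\|_X$, we have $\lambda \in (0,1)$ and $x_* = \lambda S(x_*)$, so $x_*$ falls under the hypothesis and must obey $\|x_*\|_X < M$; yet the same identity forces $\|x_*\|_X = M$, a contradiction. Hence $\|S(x_*)\|_X \le M$, the retraction acts as the identity at $S(x_*)$, and $x_* = r(S(x_*)) = S(x_*)$ is the desired fixed point.

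The delicate point is precisely this last step: the strictness of the inequality $\|x\|_X < M$ in the hypothesis is what excludes the boundary case $\|x_*\|_X = M$ and thereby prevents the retraction from being active at the Schauder fixed point; a merely non-strict bound would leave a genuine gap. Equally essential is that $S$ be compact rather than just continuous, since compactness of $S$ is exactly what renders $T(K)$ relatively compact and so brings Schauder's theorem into play.
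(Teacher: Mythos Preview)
Your argument is correct and is precisely the classical derivation of the Leray--Schauder principle from Schauder's fixed point theorem via the radial retraction; the use of the strict inequality to rule out the boundary case is handled properly.

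There is nothing to compare against, however: the paper does not prove Theorem~\ref{T2.1} at all. It is stated with a citation to Gilbarg--Trudinger \cite{GT0} and used as a black box in the application of the fixed-point method to the perturbed Leray system. So your proposal supplies a proof where the paper simply imports the result from the literature.
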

\section{Existence of solutions to the perturbed elliptic system}
\subsection{Existence of solutions to the perturbed Leray system in bounded domain}
In this subsection, we establish the existence of the solution to system \eqref{ES2} in bounded domain by the blow-up argument and fixed point principle, which is the core of this paper. 

First, for any $\lambda\in[0,1]$, we consider the following elliptic system in a bounded domain $\Omega\subset\R^{3}$ with a smooth boundary,
\begin{equation}\label{ESp2}
		\left\{
	\begin{aligned}
		&-\Delta V+\nabla P=\lambda\Big(V+x\cdot\nabla V-F_{01}-F_{1}+F_{02}+F_{2} +(\Psi+\Theta_{0})\rho\nabla(|\cdot|^{-1})\Big),\\&
		-\Delta E=\lambda\Big(E+x\cdot\nabla E-F_{03}-F_{3}+F_{04}+F_{4}\Big),\\&
		-\Delta\Psi=\lambda\Big(\Psi+x\cdot\nabla\Psi -\nabla\cdot\big((\Psi+\Theta_{0})(V+U_{0})\big)\Big),\\&
		{\rm div}\,V={\rm div}\,E=0,
	\end{aligned}
	\right.
\end{equation}
coupled with the boundary condition
\begin{equation}\label{DC1}
		V=E=\Psi=0 \ \ \mbox{on} \ \partial\Omega.
	\end{equation}
In system \eqref{ESp2}, for simplicity, we set
\begin{align}\label{Sim}
	\begin{split}
		F_{01}:=U_{0}\cdot\nabla U_{0},\ F_{02}:=B_{0}\cdot\nabla B_{0},\ F_{03}:=U_{0}\cdot\nabla B_{0},\ F_{04}:=B_{0}\cdot\nabla U_{0},\\
		\ F_{1}=(V+U_{0})\cdot\nabla V+V\cdot\nabla U_{0}, \ \ F_{2}=(E+B_{0})\cdot\nabla E+E\cdot\nabla B_{0},\\
		\ F_{3}=(V+U_{0})\cdot\nabla E+V\cdot\nabla B_{0}, \ \ F_{4}=(E+B_{0})\cdot\nabla V+E\cdot\nabla U_{0}.
	\end{split}
\end{align}
 In the first equation of system \eqref{ESp2}, $\rho$ is a smooth bounded cut-off function, which will be appropriately determined later, and we will use it to cut of the singularity at zero of the potential $|\cdot|^{-1}$.
Next, we define the following linear map $L_{\rho}$,
\begin{equation*}
\begin{aligned}
L_{\rho}(V,E,\Psi)=\Big(V+&x\cdot\nabla V-U_{0}\cdot\nabla V-V\cdot\nabla U_{0}+B_{0}\cdot\nabla E-E\cdot\nabla B_{0}+\Psi\rho\nabla(|\cdot|^{-1}),\\&\qquad\quad E+x\cdot\nabla E-U_{0}\cdot\nabla E+E\cdot\nabla U_{0}-V\cdot\nabla B_{0}+B_{0}\cdot\nabla V,\\&\qquad\qquad\qquad\qquad\qquad\qquad
\Psi+x\cdot\nabla \Psi-\nabla\cdot(\Psi U_{0}+\Theta_{0}V)\Big)
\end{aligned}
\end{equation*}
and the nonlinear map $N_{\rho}$
\begin{equation*}
\begin{aligned}
N_{\rho}(V,E,\Psi)=\Big(-U_{0}\cdot&\nabla U_{0}-V\cdot\nabla V+E\cdot\nabla E+B_{0}\cdot\nabla B_{0}+\Theta_{0}\rho\nabla(|\cdot|^{-1}),\\&\qquad\quad
-U_{0}\cdot\nabla B_{0}-V\cdot\nabla E+E\cdot\nabla V+B_{0}\cdot\nabla U_{0},\\&\qquad\qquad\qquad\qquad\qquad\qquad\quad\nabla\cdot(\Psi V+\Theta_{0}U_{0})\Big).
\end{aligned}
\end{equation*}
Then the system \eqref{ESp2} can be rewritten as
\begin{equation*}\label{re1}
(-\Delta V+\nabla P,-\Delta E,-\Delta\Psi)=\lambda A_{\rho}(V,E,\Psi),
\end{equation*}
where
\begin{equation}\label{nlm}
A_{\rho}(V,E,\Psi)=L_{\rho}(V,E,\Psi)+N_{\rho}(V,E,\Psi).
\end{equation}
Due to the Poincar\'{e} inequality, we can define the inner product of the $\mathcal{H}(\Omega)$ as follows
$$
\big\langle(V,E,\Psi),(V^{'},E^{'},\Psi^{'})\big\rangle=\int_{\Omega}\nabla V:\nabla V^{'}dx+\int_{\Omega}\nabla E:\nabla E^{'}dx+\int_{\Omega}\nabla\Psi:\nabla \Psi^{'}dx.
$$
Then the weak formula of equation can be further rewritten as
$$
\big\langle(V,E,\Psi),\Upsilon\big\rangle=\lambda\int_{\Omega}A_{\rho}\Upsilon dx, \ \ \ \forall\ \ \Upsilon\in\mathcal{C}^{\infty}_{c,\sigma}(\Omega).
$$
Denote $\mathcal{H}^{'}(\Omega)$ be the dual space of $\mathcal{H}(\Omega)$, by the Riesz representation theorem, for any $f\in\mathcal{H}^{'}(\Omega)$ there exists a isomorphism mapping $\mathbb{T}:\mathcal{H}^{'}(\Omega)\rightarrow\mathcal{H}(\Omega)$ such that
$$
\langle\mathbb{T}(f),\xi\rangle=\int_{\Omega}f\cdot\xi dx, \ \ \ \forall\ \ \xi\in\mathcal{H}(\Omega)
$$
and moreover,
$$
\|\mathbb{T}(f)\|_{\mathcal{H}(\Omega)}\leq \|f\|_{\mathcal{H}^{'}(\Omega)}.
$$
Then system \eqref{ESp2} can eventually be rewritten as
\begin{equation}\label{re2}
(V,E,\Psi)=\lambda(\mathbb{T}\circ A_{\rho})(V,E,\Psi)\triangleq \lambda S(V,E,\Psi).
\end{equation}

Now, we begin to prove the existence of a solution for system \eqref{ESp2} when $\lambda=1$. We start by establishing a priori estimates for system \eqref{ESp2} in a bounded domain with a smooth boundary.
\begin{lem}(a priori estimate)\label{L2}
Let $\Omega$ be a bounded domain in $\R^{3}$ with a smooth boundary. Assume that $(U_{0},B_{0},\Theta_{0})$ satisfy the estimates \eqref{decay estimate} and $\rho\in C^{\infty}(\R^{3})\cap L^{\infty}(\R^{3})$. Let $\lambda\in[0,1]$ and $(V,E,\Psi)\in\mathcal{H}(\Omega)$ be a solution of problem \eqref{ESp2}-\eqref{DC1}. Then we have
\begin{equation*}\label{LE1}
\int_{\Omega}(|V|^{2}+|E|^{2}+|\Psi|^{2}+|\nabla V|^{2}+|\nabla E|^{2}+|\nabla\Psi|^{2})dx\leq C_{\ast}
\end{equation*}
where the constant $C_{\ast}=C_{\ast}(\Omega,\rho,U_{0},B_{0},\Theta_{0})$, independent on $\lambda$.
\end{lem}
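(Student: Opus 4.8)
The plan is to argue by contradiction via a blow-up (rescaling) argument, in the spirit of Korobkov--Tsai. Suppose the conclusion fails: there are sequences $\lambda_k\in[0,1]$ and solutions $(V_k,E_k,\Psi_k)\in\mathcal{H}(\Omega)$ of \eqref{ESp2}--\eqref{DC1} with $\lambda=\lambda_k$ and
$$M_k:=\|(V_k,E_k,\Psi_k)\|_{\mathcal{H}(\Omega)}\longrightarrow\infty.$$
Passing to a subsequence, $\lambda_k\to\lambda_0\in[0,1]$. Put $(\hat V_k,\hat E_k,\hat\Psi_k):=M_k^{-1}(V_k,E_k,\Psi_k)$, so that $\|(\hat V_k,\hat E_k,\hat\Psi_k)\|_{\mathcal{H}(\Omega)}=1$ (using the gradient inner product, equivalent by Poincar\'e to the full $H^1$ norm on the bounded set $\Omega$). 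By weak compactness and Rellich's theorem, along a further subsequence $(\hat V_k,\hat E_k,\hat\Psi_k)\rightharpoonup(\hat V,\hat E,\hat\Psi)$ weakly in $\mathcal{H}(\Omega)$ and strongly in $\mathcal{L}^p(\Omega)$ for every $p\in[1,6)$; in particular the limit vanishes on $\partial\Omega$, and $\hat V_k\otimes\hat V_k\to\hat V\otimes\hat V$, $\hat E_k\otimes\hat E_k\to\hat E\otimes\hat E$, $\hat\Psi_k\hat V_k\to\hat\Psi\hat V$ strongly in $L^2(\Omega)$.

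Next I would insert $(V_k,E_k,\Psi_k)=M_k(\hat V_k,\hat E_k,\hat\Psi_k)$ into the weak formulation of \eqref{ESp2} and divide by $M_k^2$. Every term that is linear in the rescaled fields, or that carries a factor $U_0$, $B_0$ or $\Theta_0$ in place of one of them --- including the source terms $F_{0i}$ and the piece $\Theta_0\rho\nabla(|\cdot|^{-1})$ --- then acquires a prefactor $M_k^{-1}$ or $M_k^{-2}$ and drops out in the limit; it is essential here that $\rho$ has been chosen so that $\rho\nabla(|\cdot|^{-1})\in L^\infty(\Omega)\subset L^2(\Omega)$, which removes the singularity of the Newtonian potential at the origin and makes the limit passage legitimate even if $0\in\Omega$. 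Testing with $(\hat V_k,\hat E_k,\hat\Psi_k)$ itself and using Proposition \ref{p11} together with $\operatorname{div}U_0=\operatorname{div}B_0=0$, the a priori dangerous terms $M_k\int(\hat E_k\cdot\nabla\hat E_k)\cdot\hat V_k$ (from the Lorentz force in the first equation) and $M_k\int(\hat E_k\cdot\nabla\hat V_k)\cdot\hat E_k$ (from the field-stretching term in the second) cancel exactly, as do the self-transport terms; what survives is a rescaled energy identity $1=\lambda_k\,\mathcal{E}_k$, where $\mathcal{E}_k$ is a finite sum of quadratic expressions in $(\hat V_k,\hat E_k,\hat\Psi_k)$ paired with $U_0,B_0,\Theta_0,\rho\nabla(|\cdot|^{-1})$ or with themselves, hence uniformly bounded by \eqref{decay estimate} and $\mathbf{H}^1_0(\Omega)\hookrightarrow\mathbf{L}^4(\Omega)$. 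In particular $\lambda_k$ stays bounded away from $0$, so $\lambda_0>0$. Passing to the limit in the rescaled equations (using strong $\mathcal{L}^p$-convergence, and, after an integration by parts, strong--weak pairings for the terms still carrying a gradient of a rescaled field) shows that $(\hat V,\hat E)\in\mathbf{H}^1_{0,\sigma}(\Omega)\times\mathbf{H}^1_{0,\sigma}(\Omega)$ solves the homogeneous stationary ideal-MHD system $\hat V\cdot\nabla\hat V-\hat E\cdot\nabla\hat E+\nabla\hat P=0$, $\hat V\cdot\nabla\hat E-\hat E\cdot\nabla\hat V=0$, $\operatorname{div}\hat V=\operatorname{div}\hat E=0$ (the pressure $\hat P\in D^{1,3/2}(\Omega)\cap L^3(\Omega)$ supplied by de Rham's theorem, since $\hat V\cdot\nabla\hat V-\hat E\cdot\nabla\hat E\in\mathbf{L}^{3/2}(\Omega)$), while $\hat\Psi$ satisfies $\nabla\cdot(\hat\Psi\hat V)=0$ in $\Omega$; all three vanish on $\partial\Omega$.

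The decisive point is to show this blow-up profile is trivial. The triple $(\hat V,\hat E,\hat P)$ is precisely a solution of system \eqref{L1}, so Lemma \ref{L0} gives a constant $c$ with $\hat P=c$ for $\mathfrak{H}^2$-a.e.\ $x\in\partial\Omega$; after normalizing $c=0$ the associated total head pressure also vanishes on $\partial\Omega$, and the Bernoulli-law argument, combined with the coarea formula and the Morse--Sard theorem for Sobolev functions (following the treatment of the stationary Euler-type system \eqref{L1} in \cite{Amick,ABLS}), then forces $\hat V\equiv\hat E\equiv0$ in $\Omega$. For the temperature the identity $\nabla\cdot(\hat\Psi\hat V)=0$ is now vacuous, so instead I would test the rescaled third equation of \eqref{ESp2} against $\hat\Psi_k$: integration by parts and $\operatorname{div}(V_k+U_0)=0$ give
$$\|\nabla\hat\Psi_k\|_{L^2(\Omega)}^2=\lambda_k\Big(-\tfrac12\|\hat\Psi_k\|_{L^2(\Omega)}^2+\int_\Omega\Theta_0\,\hat V_k\cdot\nabla\hat\Psi_k+o(1)\Big),$$
and since $\hat V_k\to\hat V\equiv0$ in $\mathbf{L}^4(\Omega)$ the right-hand side tends to $-\tfrac12\lambda_0\|\hat\Psi\|_{L^2(\Omega)}^2\le0$ while the left-hand side is $\ge0$; hence $\hat\Psi\equiv0$ (and also $\nabla\hat\Psi_k\to0$ in $L^2(\Omega)$). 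Thus $(\hat V,\hat E,\hat\Psi)=0$. Feeding this back into $1=\lambda_k\,\mathcal{E}_k$: each surviving summand of $\mathcal{E}_k$ contains a factor converging strongly to $0$ (an $L^2$-norm of a rescaled field, a product such as $\hat E_k\otimes B_0$, the term $\hat\Psi_k\hat V_k$, or an explicit $M_k^{-1}$), so $\mathcal{E}_k\to0$, whence $1=\lambda_0\cdot0=0$ --- a contradiction. This proves the bound with $C_{\ast}=C_{\ast}(\Omega,\rho,U_0,B_0,\Theta_0)$.

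I expect the main obstacle to be precisely the triviality of the blow-up profile: Lemma \ref{L0} supplies the constancy of the pressure on $\partial\Omega$, but converting this into $\hat V\equiv\hat E\equiv0$ requires the delicate Bernoulli/coarea/Morse--Sard machinery for the ideal-MHD-type system \eqref{L1}. A second, more technical difficulty --- already anticipated by the form of \eqref{ESp2} --- is the singular gravitational force: without the cut-off $\rho$, the term $(\Psi+\Theta_0)\nabla(|\cdot|^{-1})$ need not be $L^2$-integrable over $\Omega$ when $0\in\Omega$, and the passage to the limit in the weak formulation would break down.
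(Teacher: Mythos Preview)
Your blow-up setup and the passage to the limiting ideal-MHD system are fine, but the argument breaks at the step where you assert that ``the Bernoulli-law argument, combined with the coarea formula and the Morse--Sard theorem \dots\ forces $\hat V\equiv\hat E\equiv0$''. Lemma~\ref{L0} and the references \cite{Amick,ABLS} establish only that the pressure $\hat P$ is \emph{constant on $\partial\Omega$}; they do not yield triviality of the profile. The Bernoulli head is constant along streamlines, but interior streamlines are closed and not connected to $\partial\Omega$, so the boundary information does not propagate inward; no Liouville statement for system~\eqref{L1} in a bounded domain is available (nor needed) here. Since your final contradiction $1=\lambda_0\cdot 0$ relies on $\mathcal{E}_k\to 0$, and $\mathcal{E}_k$ contains terms such as $\int_\Omega\hat V_k\cdot\nabla U_0\cdot\hat V_k\to\int_\Omega\hat V\cdot\nabla U_0\cdot\hat V$ that need not vanish, the argument does not close.

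The paper never attempts to prove $(\bar V,\bar E)=0$. Instead it uses Lemma~\ref{L0} \emph{directly} in the limiting energy identity: after showing that $(\bar V,\bar E)$ solves \eqref{L1} weakly, the surviving quadratic cross terms are rewritten (via Proposition~\ref{p11}, approximating $U_0,B_0$ by divergence-free test functions) as $-\int_\Omega\nabla\bar P\cdot U_0$, and then
\[
\int_\Omega\nabla\bar P\cdot U_0=\int_{\partial\Omega}\bar P\,U_0\cdot\overrightarrow{n}\,dS=c\int_{\partial\Omega}U_0\cdot\overrightarrow{n}\,dS=-c\int_\Omega\operatorname{div}U_0\,dx=0,
\]
yielding the contradiction $1+\text{(nonnegative)}=0$ with no triviality claim. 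Two further technical points: (i) the paper normalises $\Psi_k$ separately by $J_k=\|\nabla\Psi_k\|_{L^2}$ and splits into the cases $\limsup L_k/J_k<\infty$ and $=\infty$, the first being eliminated using the $\Psi$-equation alone; (ii) your reliance on ``$\rho\nabla(|\cdot|^{-1})\in L^\infty$'' is not granted by the hypotheses of the lemma (only $\rho\in C^\infty\cap L^\infty$ is assumed there) --- the paper instead controls $\int_\Omega\Psi_k\rho\nabla(|\cdot|^{-1})V_k$ via Hardy's inequality, using $\Psi_k,V_k\in H^1_0(\Omega)$.
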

\begin{proof}
Let us first give a brief analysis of this problem. In fact, by Poincar\'{e} inequality, it suffices to prove
$$
\int_{\Omega}(|\nabla V|^{2}+|\nabla E|^{2}+|\nabla\Psi|^{2})dx\leq C_{\ast}.
$$
Furthermore, we claim that we only need to prove
\begin{equation}\label{LE5}
	\int_{\Omega}(|\nabla V|^{2}+|\nabla E|^{2})\leq C_{\ast}.
\end{equation}
Multiplying the third equation of $\eqref{ESp2}$ by $\Psi$ and integrating by parts in $\Omega$, we notice that 
$$
\int_{\Omega}\nabla\cdot(\Psi(V+U_{0}))\Psi dx=0,
$$
since $\nabla\cdot(V+U_{0})=0$. Thus, we obtain
\begin{equation}\label{LE2}
	\frac{\lambda}{2}\int_{\Omega}|\Psi|^{2}dx+\int_{\Omega}|\nabla\Psi|^{2}dx=-\lambda\int_{\Omega}\nabla\cdot[\Theta_{0}(V+U_{0})]\Psi dx.
\end{equation}
Using the decay estimates \eqref{decay estimate}, we can easily get that $|\Theta_{0}U_{0}|\in L^{2}(\R^{3})$. Thus, straightforward calculations show that
\begin{equation}\label{LE3}
\begin{aligned}
\lambda\Big|\int_{\Omega}\nabla\cdot\big(\Theta_{0}(V&+U_{0})\big)\Psi dx\Big|=\lambda\Big|\int_{\Omega}[\Theta_{0}(V+U_{0})]\nabla\Psi dx\Big|\\&
\leq \frac{1}{2}\int_{\Omega}|\nabla\Psi|^{2}dx+\lambda\Big(\|\Theta_{0}\|^{2}_{\infty}\int_{\Omega}|V|^{2}dx+\int_{\Omega}|\Theta_{0}U_{0}|^{2}dx\Big).
\end{aligned}
\end{equation}
By \eqref{LE2}-\eqref{LE3}, we get
\begin{equation}\label{LE4}
\frac{\lambda}{2}\int_{\Omega}|\Psi|^{2}dx+\frac{1}{2}\int_{\Omega}|\nabla\Psi|^{2}dx\leq\lambda\Big(\|\Theta_{0}\|^{2}_{\infty}\int_{\Omega}|V|^{2}dx+\int_{\Omega}|\Theta_{0}U_{0}|^{2}dx\Big).
\end{equation}
Therefore, due to \eqref{LE4}, it suffices to prove this lemma by proving \eqref{LE5}. 

Now, let us argue by contradiction: assume that there exist a sequence $\lambda_{k}\in[0,1]$ and a sequence $(V_{k},E_{k},\Psi_{k})\in\mathcal{H}(\Omega)$ satisfying
\begin{equation}\label{ESp3}
	\left\{
\begin{aligned}
&-\Delta V_{k}+\nabla P_{k}=\lambda\Big(V_{k}+x\cdot\nabla V_{k}-F_{01}-F_{1k}+F_{02}+F_{2k} +(\Psi_{k}+\Theta_{0})\rho\nabla(|\cdot|^{-1})\Big),\\&
-\Delta E_{k}=\lambda\Big(E_{k}+x\cdot\nabla E_{k}-F_{03}-F_{3k}+F_{04}+F_{4k}\Big),\\&
-\Delta\Psi_{k}=\lambda\Big(\Psi_{k}+x\cdot\nabla\Psi_{k} -\nabla\cdot\big((\Psi_{k}+\Theta_{0})(V_{k}+U_{0})\big)\Big),\\&
{\rm div}\,V_{k}={\rm div}\,E_{k}=0,
\end{aligned}
\right.
\end{equation}
where the definition of $F_{01}$, $F_{02}$, $F_{03}$, $F_{04}$ is given in \eqref{Sim} and
\begin{align}\label{Sim2}
\begin{split}
F_{1k}=(V_{k}+U_{0})\cdot\nabla V_{k}+V_{k}\cdot\nabla U_{0}, \ \ F_{2k}=(E_{k}+B_{0})\cdot\nabla E_{k}+E_{k}\cdot\nabla B_{0},\\
F_{3k}=(V_{k}+U_{0})\cdot\nabla E_{k}+V_{k}\cdot\nabla B_{0}, \ \ F_{4k}=(E_{k}+B_{0})\cdot\nabla V_{k}+E_{k}\cdot\nabla U_{0}.
\end{split}
\end{align}
Moreover,
$$
L_{k}^{2}:=\int_{\Omega}(|\nabla V_{k}|^{2}+|\nabla E_{k}|^{2})dx\rightarrow+\infty.
$$
For technical reasons, we also set
$$
J_{k}^{2}:=\int_{\Omega}|\nabla\Psi_{k}|^{2}dx.
$$
Then, we normalized $(V_{k},E_{k})$ and $\Psi_{k}$ by defining
$$
\hat{V}_{k}=\frac{V_{k}}{L_{k}},\ \ \hat{E}_{k}=\frac{E_{k}}{L_{k}},\ \ \mbox{and}\ \ \hat{\Psi}_{k}=\frac{\Psi_{k}}{J_{k}}.
$$
Since $(\hat{V}_{k},\hat{E}_{k},\hat{\Psi}_{k})$ is bounded in $\mathcal{H}(\Omega)$, we can extract a subsequence still denoted by $(\hat{V}_{k},\hat{E}_{k},\hat{\Psi}_{k})$ such that
\begin{equation}\label{wc}
(\hat{V}_{k},\hat{E}_{k},\hat{\Psi}_{k})\rightharpoonup(\bar{V},\bar{E},\bar{\Psi}),\ \ \mbox{in}\ \ \mathcal{H}(\Omega),
\end{equation}
by the compact embedding theorem, which means (after extracting a suitable subsequence)
\begin{equation}\label{sc}
(\hat{V}_{k},\hat{E}_{k},\hat{\Psi}_{k})\rightarrow(\bar{V},\bar{E},\bar{\Psi}),\ \ \mbox{in}\ \ \mathcal{L}^{p}(\Omega),\ \ \mbox{for} \ \ p\in[1,6).
\end{equation}
We can further also assume that $\lambda_{k}\rightarrow\lambda_{0}\in[0,1]$. The situation is divided into two cases depending on whether or not $L_{k}/J_{k}$ tends to infinity as $k\rightarrow+\infty$.

\textbf{Case 1:\ $\overline{\lim}_{k\rightarrow+\infty}L_{k}/J_{k}<+\infty$.} By taking a subsequence, we can assume that there exists a constant $\alpha\geq0$, such that $L_{k}/J_{k}\rightarrow\alpha$. In fact, by the estimate \eqref{LE4}, we easily deduce that $\alpha>0$. Hence we have $J_{k}\rightarrow+\infty$ because $L_{k}\rightarrow+\infty$. We then substitute $(V,\Psi)$ and $\lambda$ in equation \eqref{LE2} with $(V_{k},\Psi_{k})$ and $\lambda_{k}$, respectively, that is,
\begin{equation}\label{LE6}
\frac{\lambda_{k}}{2}\int_{\Omega}|\Psi_{k}|^{2}dx+\int_{\Omega}|\nabla\Psi_{k}|^{2}dx
=-\lambda_{k}\int_{\Omega}\nabla\cdot\Big(\Theta_{0}(V_{k}+U_{0})\Big)\Psi_{k}dx.
\end{equation}
We multiply identity \eqref{LE6} by $1/J_{k}^{2}$ and take the limit as $k\rightarrow+\infty$. Using \eqref{wc} and \eqref{sc}, we obtain
\begin{equation}\label{LEQ1}
	\begin{aligned}
	\frac{1}{J_{k}^{2}}\int_{\Omega}\nabla\cdot(\Theta_{0}V_{k})\Psi_{k}dx&=\frac{L_{k}}{J_{k}}\int_{\Omega}\nabla\cdot(\Theta_{0}\hat{V}_{k})\hat{\Psi}_{k}dx\\&
	\rightarrow\alpha\int_{\Omega}\nabla\cdot(\Theta_{0}\bar{V})\bar{\Psi}dx,
	\end{aligned}
\end{equation}
where we use the fact that $\Theta_{0}\in L^{\infty}(\Omega)$. By integration by parts and H\"{o}lder inequalities, we get
\begin{equation}\label{LEQ2}
	\begin{aligned}
\frac{1}{J_{k}^{2}}\Big|\int_{\Omega}\nabla\cdot(\Theta_{0}U_{0})\Psi_{k}dx\Big|&=\frac{1}{J_{k}^{2}}\Big|\int_{\Omega}(\Theta_{0}U_{0})\nabla\Psi_{k}dx\Big|\\&\leq \frac{C}{J_{k}}\|\Theta_{0}U_{0}\|_{L^{2}(\Omega)}\rightarrow0.
	\end{aligned}
\end{equation}
Thus, by \eqref{LE6}-\eqref{LEQ2}, we obtain the identity
\begin{equation}\label{LE7}
1+\frac{\lambda_{0}}{2}\int_{\Omega}|\bar{\Psi}|^{2}dx
=-\lambda_{0}\alpha\int_{\Omega}\nabla\cdot(\Theta_{0}\bar{V})\bar{\Psi}dx
\end{equation}
which implies that $\lambda_{0}\alpha\neq0$.
Next, we consider the weak formulation of the third equation of $\eqref{ESp3}$, namely, for all $\zeta\in C_{0}^{\infty}(\Omega)$,
\begin{equation}\label{LE8}
\int_{\Omega}\nabla\Psi_{k}\nabla\zeta dx=\lambda_{k}\Big(\int_{\Omega}(\Psi_{k}+x\cdot\nabla\Psi_{k})\zeta dx
-\int_{\Omega}\nabla\cdot\big((\Psi_{k}+\Theta_{0})(V_{k}+U_{0})\big)\zeta dx\Big).
\end{equation}
Multiplying identity \eqref{LE8} by $1/J_{k}^{2}$ and taking $k\rightarrow+\infty$. By \eqref{wc}-\eqref{sc} and the assumption of $\Theta_{0},U_{0}$, it is easy to verify that
\begin{equation*}
	\begin{aligned}
	&\frac{1}{J_{k}^{2}}\int_{\Omega}\nabla\Psi_{k}\nabla\zeta dx\rightarrow0,\\
	&\frac{\lambda_{k}}{J_{k}^{2}}\int_{\Omega}(\Psi_{k}+x\cdot\nabla\Psi_{k})\zeta dx\rightarrow0,\\
	&\frac{\lambda_{k}}{J_{k}^{2}}\int_{\Omega}\nabla\cdot\big((\Psi_{k}+\Theta_{0})U_{0}+\Theta_{0}V_{k}\big)\zeta dx\rightarrow0,
\end{aligned}
\end{equation*}
and
$$
\frac{\lambda_{k}}{J_{k}^{2}}\int_{\Omega}\nabla\cdot(\Psi_{k}V_{k})\zeta dx\rightarrow\lambda_{0}\alpha\int_{\Omega}\nabla\cdot(\bar{\Psi}\bar{V})\zeta dx.
$$
Therefore, we get
\begin{equation}\label{LEQ3}
\lambda_{0}\alpha\int_{\Omega}\nabla\cdot(\bar{\Psi}\bar{V})\zeta dx=0, \ \ \mbox{for all} \ \ \zeta\in C_{0}^{\infty}(\Omega).
\end{equation}
Since $\lambda_{0}\alpha\neq0$, the identity \eqref{LEQ3} implies $$
\nabla\cdot(\bar{\Psi}\bar{V})=\bar{V}\cdot\nabla\bar{\Psi}=0.
$$
Let us go back to \eqref{LE7}, we obtain
\begin{align*}
\begin{split}
1+\frac{\lambda_{0}}{2}\int_{\Omega}|\bar{\Psi}|^{2}dx
&=-\lambda_{0}\alpha\int_{\Omega}\nabla\cdot(\Theta_{0}\bar{V})\bar{\Psi}dx\\&=
\lambda_{0}\alpha\int_{\Omega}\Theta_{0}\bar{V}\cdot\nabla\bar{\Psi}dx=0.
\end{split}
\end{align*}
This is a contradiction, so we excludes that $\overline{\lim}_{k\rightarrow+\infty}L_{k}/J_{k}<+\infty$.

\textbf{Case 2: $\overline{\lim}_{k\rightarrow+\infty}L_{k}/J_{k}=+\infty$.} After extracting a subsequence, we can assume that $J_{k}/L_{k}\rightarrow0$. Multiplying first equation of system \eqref{ESp3} by $V_{k}$ and the second equation of system \eqref{ESp3} by $E_{k}$. By some integration by parts, we have
\begin{align}\label{LE9}
\begin{split}
&\frac{\lambda_{k}}{2}\int_{\Omega}|V_{k}|^{2}dx+\int_{\Omega}|\nabla V_{k}|^{2}dx\\&=\lambda_{k}\int_{\Omega}(B_{0}\cdot\nabla B_{0}-U_{0}\cdot\nabla U_{0})V_{k}dx+\lambda_{k}\int_{\Omega}(E_{k}\cdot\nabla B_{0}-V_{k}\cdot\nabla U_{0})V_{k}dx
\\&+\lambda_{k}\int_{\Omega}\big((E_{k}+B_{0})\cdot\nabla E_{k}-(V_{k}+U_{0})\cdot\nabla V_{k}\big)V_{k}dx\\&+\lambda_{k}\int_{\Omega}\Psi_{k}\rho\nabla(|\cdot|^{-1})V_{k}dx+\lambda_{k}\int_{\Omega}\Theta_{0}\rho\nabla(|\cdot|^{-1})V_{k}dx\\&=I_{1}+I_{2}+I_{3}+I_{4}+I_{5}
\end{split}
\end{align}
and
\begin{align}\label{LE10}
\begin{split}
&\frac{\lambda_{k}}{2}\int_{\Omega}|E_{k}|^{2}dx+\int_{\Omega}|\nabla E_{k}|^{2}dx\\&=\lambda_{k}\int_{\Omega}(B_{0}\cdot\nabla U_{0}-U_{0}\cdot\nabla B_{0})E_{k}dx+\lambda_{k}\int_{\Omega}(E_{k}\cdot\nabla U_{0}-V_{k}\cdot\nabla B_{0})E_{k}dx\\&
+\lambda_{k}\int_{\Omega}\big((E_{k}+B_{0})\cdot\nabla V_{k}-(V_{k}+U_{0})\cdot\nabla E_{k}\big)E_{k}dx\\&=I_{6}+I_{7}+I_{8}.
\end{split}
\end{align}
Let us add \eqref{LE9} and \eqref{LE10}, multiply by $1/L_{k}^{2}$ and take the limit as $k\rightarrow\infty$. With \eqref{wc}-\eqref{sc} and the assumption of $(U_{0},B_{0},\Theta_{0})$ at hand, we study the limit of each term. We can readily get
$$
\frac{1}{L_{k}^{2}}\big(I_{1}+I_{6}\big)\rightarrow0.
$$
Since $|\nabla U_{0}|,|\nabla B_{0}|\in L^{\infty}(\Omega)$, we also have
$$
\frac{1}{L_{k}^{2}}I_{2}\rightarrow\lambda_{0}\int_{\Omega}(\bar{E}\cdot\nabla B_{0}-\bar{V}\cdot\nabla U_{0})\bar{V}dx
$$
and
$$
\frac{1}{L_{k}^{2}}I_{7}\rightarrow\lambda_{0}\int_{\Omega}(\bar{E}\cdot\nabla U_{0}-\bar{V}\cdot\nabla B_{0})\bar{E}dx.
$$
Using Proposition \ref{p11} yield
$$
I_{3}+I_{8}=0.
$$
We now turn to $I_{4}$ and $I_{5}$, which involve the singular gravitational force and thus require careful handling. Since $V_{k}\in\mathbf{H}_{0}^{1}(\Omega)$ and $\Psi_{k}\in H_{0}^{1}(\Omega)$, we have
\begin{align}\label{LE11}
\begin{split}
\frac{1}{L_{k}^{2}}\Big|\int_{\Omega}\Psi_{k}\rho\nabla(|\cdot|^{-1})V_{k}dx\Big|&\leq \frac{C}{L_{k}^{2}}\bigg\|\frac{\Psi_{k}}{|\cdot|}\bigg\|_{L^{2}(\Omega)}\bigg\|\frac{V_{k}}{|\cdot|}\bigg\|_{L^{2}(\Omega)}\\&
\leq\frac{C}{L_{k}^{2}}\|\nabla\Psi_{k}\|_{L^{2}(\Omega)}\|\nabla V_{k}\|_{L^{2}(\Omega)}\\&
\leq\frac{C}{L_{k}}\|\nabla\Psi_{k}\|_{L^{2}(\Omega)}=C\frac{J_{k}}{L_{k}}\rightarrow0,
\end{split}
\end{align}
where we use the Hardy inequality. The function $\Theta_{0}$ does not belong to $H_{0}^{1}(\Omega)$, but using Lemma \ref{Lorentz}, we have
$$
\|\Theta_{0}\|_{L^{6,1}(\Omega)}\leq C\|G_{1/2}\|_{L^{\frac{6}{5},1}(\Omega)}\|\theta_{0}\|_{L^{3,\infty}(\Omega)}.
$$
 Then, using Lemma \ref{Holder}, \eqref{embedd} and the Sobolev inequality, we can drive
\begin{align}\label{LE12}
\begin{split}
\frac{1}{L_{k}^{2}}\Big|\int_{\Omega}\Theta_{0}\rho\nabla(|\cdot|^{-1})V_{k}dx\Big|&\leq \frac{C}{L_{k}^{2}}\|\Theta_{0}\|_{L^{6,1}(\Omega)}\big\|\nabla(|\cdot|^{-1})\big\|_{L^{\frac{3}{2},\infty}(\Omega)}\|V_{k}\|_{L^{6,\infty}(\Omega)}\\&
\leq\frac{C}{L_{k}^{2}}\|V_{k}\|_{L^{6}(\Omega)}\leq \frac{C}{L_{k}^{2}}\|\nabla V_{k}\|_{L^{2}(\Omega)}=\frac{C}{L_{k}}\rightarrow0.
\end{split}
\end{align}
 By the above calculations, we get the following identity
\begin{align}\label{LE13}
\begin{split}
&1+\frac{\lambda_{0}}{2}\int_{\Omega}|\bar{V}|^{2}dx+\frac{\lambda_{0}}{4}\int_{\Omega}|\bar{E}|^{2}dx\\&
=\lambda_{0}\int_{\Omega}\Big(\bar{E}\cdot\nabla B_{0}\bar{V}-\bar{V}\cdot\nabla B_{0}\bar{E}\Big)dx-\lambda_{0}\int_{\Omega}\Big(\bar{V}\cdot\nabla U_{0}\bar{V}-\bar{E}\cdot\nabla U_{0}\bar{E}\Big)dx,
\end{split}
\end{align}
which implies $\lambda_{0}\neq0$. Thus, for large enough $k$, we have $\lambda_{k}\neq0$ and we can normalize the pressure putting
$$
\hat{P_{k}}=\frac{P_{k}}{\lambda_{k}L_{k}^{2}}.
$$
Next, multiply the first and second equations of system \eqref{ESp3} by $1/(\lambda_{k}L_{k}^{2})$, yielding
\begin{align}\label{LE14}
\begin{split}
&\hat{V_{k}}\cdot\nabla\hat{V_{k}}-\hat{E_{k}}\cdot\nabla \hat{E_{k}}+\nabla\hat{P_{k}}\\&=\frac{1}{L_{k}}\bigg(\frac{\Delta \hat{V_{k}}}{\lambda_{k}}+\hat{V_{k}}+x\cdot\nabla\hat{V_{k}}+\frac{1}{L_{k}}B_{0}\cdot\nabla B_{0}-\frac{1}{L_{k}}U_{0}\cdot\nabla U_{0}         \\&-U_{0}\cdot\nabla\hat{V_{k}}-\hat{V_{k}}\cdot\nabla U_{0}+B_{0}\nabla\cdot\hat{E_{k}}+\hat{E_{k}}\cdot\nabla B_{0}+\Big(\frac{J_{k}}{L_{k}}\hat{\Psi}_{k}+\frac{1}{L_{k}}\Theta_{0}\Big)\rho\nabla(|\cdot|^{-1})\bigg)\\
\end{split}
\end{align}
and
\begin{align}\label{LE15}
\begin{split}
\hat{V_{k}}\cdot\nabla\hat{E_{k}}-\hat{E_{k}}\cdot\nabla \hat{V_{k}}=\frac{1}{L_{k}}\bigg(&\frac{\Delta \hat{E_{k}}}{\lambda_{k}}+\hat{E_{k}}+x\cdot\nabla\hat{E_{k}}+\frac{1}{L_{k}}B_{0}\cdot\nabla U_{0}-\frac{1}{L_{k}}U_{0}\cdot\nabla B_{0}  \\&         -U_{0}\cdot\nabla\hat{E_{k}}-\hat{E_{k}}\cdot\nabla B_{0}+B_{0}\cdot\nabla\hat{V_{k}}+\hat{E_{k}}\cdot\nabla U_{0}\bigg).
\end{split}
\end{align}
Now, we consider the weak formulation of the equation \eqref{LE14} and \eqref{LE15}. For convenience, we focus on equation \eqref{LE14}, as \eqref{LE15} is easier. Precisely, testing with an arbitrary $\varphi\in \mathbf{C}_{0,\sigma}^{\infty}(\Omega)$, after integrating by parts and taking $k\rightarrow\infty$. We obviously have
\begin{align*}
	\begin{split}
\frac{1}{L_{k}}\int_{\Omega}\bigg(\frac{\Delta \hat{V_{k}}}{\lambda_{k}}+\hat{V_{k}}+x\cdot\nabla\hat{V_{k}}-U_{0}\cdot\nabla\hat{V_{k}}-\hat{V_{k}}\cdot\nabla U_{0}+B_{0}\cdot\nabla\hat{E_{k}}+\hat{E_{k}}\cdot\nabla B_{0}&\bigg)\varphi dx\rightarrow0,
\end{split}
\end{align*}
and
$$
\frac{1}{L_{k}^{2}}\int_{\Omega}\big(B_{0}\cdot\nabla B_{0}-U_{0}\cdot\nabla U_{0}\big)\varphi dx\rightarrow0.
$$
Furthermore, as in \eqref{LE11}-\eqref{LE12},
$$
\frac{J_{k}}{L_{k}^{2}}\int_{\Omega}\hat{\Psi}_{k}\rho\nabla(|\cdot|^{-1})\varphi dx\rightarrow0,
$$
and
$$
\frac{1}{L_{k}^{2}}\int_{\Omega}\Theta_{0}\rho\nabla(|\cdot|^{-1})\varphi dx\rightarrow0.
$$
Combined with the fact that $\int_{\Omega}\nabla\hat{P_{k}}\cdot\varphi dx=0$, we get
\begin{equation}\label{LE16}
\int_{\Omega}(\bar{V}\cdot\nabla\bar{V}-\bar{E}\cdot\nabla\bar{E})\varphi dx=0,\ \ \ \mbox{for all}\ \ \varphi\in \mathbf{C}_{0,\sigma}^{\infty}(\Omega).
\end{equation}
Similarly, by the weak formulation of the equation \eqref{LE15},
\begin{equation}\label{LE17}
\int_{\Omega}(\bar{V}\cdot\nabla\bar{E}-\bar{E}\cdot\nabla\bar{V})\phi dx=0,\ \ \ \mbox{for all}\ \ \phi\in \mathbf{C}_{0,\sigma}^{\infty}(\Omega).
\end{equation}
Hence, \eqref{LE16} and \eqref{LE17} imply that $\bar{V},\bar{E}\in\mathbf{H}_{0,\sigma}^{1}(\Omega)$ is a weak solution of system \eqref{L1}. Then, by Rham Theorem\cite{TR} we can find a pressure $\bar{P}\in D^{1,\frac{3}{2}}(\Omega)\cap L^{3}(\Omega)$ such that $(\bar{V},\bar{E},\bar{P})$ solves
\begin{equation*}\label{LE18}
\begin{cases}
(\bar{V}\cdot\nabla) \bar{V}-(\bar{E}\cdot\nabla) \bar{E}+\nabla \bar{P}=0\ \ \ \ \ \ &\mbox{in}\ \ \Omega,\\
(\bar{V}\cdot\nabla) \bar{E}-(\bar{E}\cdot\nabla) \bar{V}=0\ \ \ \ \ \ &\mbox{in}\ \ \Omega,\\
{\rm div}\,\bar{V}={\rm div}\,\bar{E}=0\ \ \ \ \ \ &\mbox{in}\ \ \Omega,\\
\bar{V}=\bar{E}=0\ \ \ \ \ \ &\mbox{on}\ \ \partial\Omega.
\end{cases}
\end{equation*}
 Notice that $U_{0},B_{0}\in \mathbf{L}^{4}(\Omega)$, approximating $U_{0},B_{0}$ in the $\mathbf{L}^{4}-$norm by test functions and applying Proposition \ref{p11}, we obtain
\begin{equation}\label{LE19}
\int_{\Omega}\bar{E}\cdot\nabla B_{0}\bar{V}dx-\int_{\Omega}\bar{V}\cdot\nabla B_{0}\bar{E}dx=-\int_{\Omega}\bar{E}\cdot\nabla\bar{V}B_{0}dx+\int_{\Omega}\bar{V}\cdot\nabla\bar{E}B_{0}dx=0
\end{equation}
and
\begin{align}\label{LE20}
	\begin{split}
\int_{\Omega}\bar{V}\cdot\nabla U_{0}\bar{V}dx-\int_{\Omega}&\bar{E}\cdot\nabla U_{0}\bar{E}dx\\&=-\int_{\Omega}\bar{V}\cdot\nabla\bar{V}U_{0}dx+\int_{\Omega}\bar{E}\cdot\nabla\bar{E}U_{0}dx=\int_{\Omega}\nabla\bar{P}U_{0}dx.
\end{split}
\end{align}
Now, let us go back to \eqref{LE13}, by the \eqref{LE19}-\eqref{LE20}, we have
\begin{align}\label{LE21}
\begin{split}
1+\frac{\lambda_{0}}{2}\int_{\Omega}|\bar{V}|^{2}dx+\frac{\lambda_{0}}{2}\int_{\Omega}|\bar{E}|^{2}dx&=-\lambda_{0}\int_{\Omega}\nabla\bar{P}U_{0}dx\\&
=-\lambda_{0}\int_{\Omega}\nabla\cdot(\bar{P}U_{0})dx\\&=c\lambda_{0}\int_{\partial\Omega}U_{0}\cdot \overrightarrow{n}dS\\&=-c\lambda_{0}\int_{\Omega}\nabla\cdot U_{0}dx=0,
\end{split}
\end{align}
where we use the fact that the pressure $\bar{P}(x)=c$ on $\partial\Omega$ (for some constant $c\in\R$) by Lemma \ref{L0}.
This leads to a contradiction, so Case 2 does not occur, and Lemma \ref{L2} is proved.   

\end{proof}

Next, we use the Leray-Schauder theorem to prove the existence of solutions to problem \eqref{ESp2}-\eqref{DC1} in a bounded domains with smooth boundary, i.e., we need to prove the continuity and compactness of the operator $S$ in \eqref{re2}. By the definition of operator $S$, it suffices to prove that the operator $A_{\rho}$ is continuous and compact. As before, we impose additional conditions on $\rho$, assuming $\rho \in C(\R^{3})$ and satisfying
\begin{equation}\label{LE22}
\rho\in L^{\infty}(\R^{3}),\ \ 0\notin supp\rho,\ \   \rho\nabla(|\cdot|^{-1})\in L^{2}(\R^{3})\cap L^{\infty}(\R^{3}).
\end{equation}
\begin{lem}(Continuity and compactness)\label{L3}
Let $\Omega$ be a bounded domain with a smooth boundary, $\rho\in C(\R^{3})$ and satisfies \eqref{LE22}. Then
$$
A_{\rho}:\mathcal{H}(\Omega)\rightarrow\mathcal{L}^{\frac{3}{2}}(\Omega)
$$
is continuous. And
$$
A_{\rho}:\mathcal{H}(\Omega)\rightarrow\mathcal{H'}(\Omega)
$$
is compact. Where $A_{\rho}$ is defined in \eqref{nlm}.
\end{lem}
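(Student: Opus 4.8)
The plan is to read off from the definition \eqref{nlm} that $A_\rho=L_\rho+N_\rho$, where $L_\rho$ is affine-linear in $(V,E,\Psi)$ while $N_\rho$ splits into a part depending only on $(U_0,B_0,\Theta_0)$ and a part bilinear in $(V,E,\Psi)$. Before estimating anything I would dispose of every expression of the form $\nabla\cdot(a\,w)$ occurring in $L_\rho$ and $N_\rho$ (namely $\nabla\cdot(\Psi U_0)$, $\nabla\cdot(\Theta_0 V)$, $\nabla\cdot(\Psi V)$, $\nabla\cdot(\Theta_0 U_0)$): in each of them the vector factor $w$ is one of $V,U_0$, which is divergence-free, so $\nabla\cdot(a\,w)=w\cdot\nabla a$ and no genuine second-order term is left to handle.

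Next I would estimate each of the seven scalar components of $A_\rho(V,E,\Psi)$ in $L^{3/2}(\Omega)$ using only three facts: the Sobolev embedding $H^1_0(\Omega)\hookrightarrow L^6(\Omega)$ (valid since $\Omega\subset\R^3$ is bounded), the finiteness of $|\Omega|$, and the uniform bounds $|U_0|+|B_0|+|\Theta_0|+|\nabla U_0|+|\nabla B_0|+|\nabla\Theta_0|\le C$ from \eqref{decay estimate} together with $\rho\nabla(|\cdot|^{-1})\in L^\infty(\R^3)$ from \eqref{LE22}. The linear contributions — for instance $V$, $x\cdot\nabla V$, $U_0\cdot\nabla V$, $V\cdot\nabla U_0$, $B_0\cdot\nabla E$, $U_0\cdot\nabla\Psi$, $V\cdot\nabla\Theta_0$, $\Psi\,\rho\nabla(|\cdot|^{-1})$ — all lie in $L^2(\Omega)\hookrightarrow L^{3/2}(\Omega)$; the constant contributions $U_0\cdot\nabla U_0$, $B_0\cdot\nabla B_0$, $U_0\cdot\nabla B_0$, $\Theta_0\,\rho\nabla(|\cdot|^{-1})$, $U_0\cdot\nabla\Theta_0$ lie in $L^\infty(\Omega)\hookrightarrow L^{3/2}(\Omega)$; and the bilinear contributions $V\cdot\nabla V$, $E\cdot\nabla E$, $V\cdot\nabla E$, $E\cdot\nabla V$, $V\cdot\nabla\Psi$ lie in $L^{3/2}(\Omega)$ by Hölder's inequality with $\tfrac{1}{6}+\tfrac{1}{2}=\tfrac{2}{3}$. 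This already gives $A_\rho(\mathcal H(\Omega))\subset\mathcal L^{3/2}(\Omega)$ together with the quantitative bound $\|A_\rho(V,E,\Psi)\|_{\mathcal L^{3/2}(\Omega)}\le C(1+R+R^2)$ on the ball $\{\|(V,E,\Psi)\|_{\mathcal H(\Omega)}\le R\}$, so that $A_\rho$ carries bounded sets to bounded sets. Continuity of $A_\rho\colon\mathcal H(\Omega)\to\mathcal L^{3/2}(\Omega)$ is immediate for the linear and constant parts; for each bilinear piece, written as $(V,E,\Psi)\mapsto B\big((V,E,\Psi),(V,E,\Psi)\big)$ with $B$ a bounded bilinear map, I would use $B(x,x)-B(y,y)=B(x-y,x)+B(y,x-y)$ to conclude $\|B(x,x)-B(y,y)\|\le C\|x-y\|(\|x\|+\|y\|)$, i.e.\ local Lipschitz continuity.

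For the compactness assertion I would write $A_\rho$ as the composition of $A_\rho\colon\mathcal H(\Omega)\to\mathcal L^{3/2}(\Omega)$ with the inclusion $\iota\colon\mathcal L^{3/2}(\Omega)\to\mathcal H'(\Omega)$ sending $f$ to the functional $\xi\mapsto\int_\Omega f\cdot\xi\,dx$, and prove that $\iota$ is a compact linear operator. It is bounded because $\mathcal H(\Omega)\hookrightarrow\mathcal L^{3}(\Omega)$ by Sobolev embedding and Hölder with exponents $\tfrac{3}{2},3$ give $\|\iota f\|_{\mathcal H'(\Omega)}\le C\|f\|_{\mathcal L^{3/2}(\Omega)}$. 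For compactness, the embedding $j\colon\mathcal H(\Omega)\hookrightarrow\mathcal L^{3}(\Omega)$ is compact by the Rellich--Kondrachov theorem ($\Omega$ bounded, $3<6$), hence its Banach-space adjoint $j^{\ast}\colon(\mathcal L^{3}(\Omega))'=\mathcal L^{3/2}(\Omega)\to\mathcal H'(\Omega)$ is compact by Schauder's theorem, and one checks from the definition of the duality pairing that $j^{\ast}=\iota$. Since $A_\rho\colon\mathcal H(\Omega)\to\mathcal L^{3/2}(\Omega)$ is continuous and bounded on bounded sets while $\iota$ is compact linear, the composite $\iota\circ A_\rho\colon\mathcal H(\Omega)\to\mathcal H'(\Omega)$ is continuous and maps bounded sets into relatively compact sets, hence is compact. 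I do not expect any real obstacle here: the whole argument is bookkeeping, and the only two non-routine points are the reduction of the divergence terms via $\mathrm{div}\,V=\mathrm{div}\,E=\mathrm{div}\,U_0=\mathrm{div}\,B_0=0$ and the duality argument (Rellich--Kondrachov plus compactness of adjoints of compact operators) that upgrades $\mathcal L^{3/2}(\Omega)\hookrightarrow\mathcal H'(\Omega)$ to a compact embedding; everything else reduces to Hölder's inequality combined with \eqref{decay estimate}, \eqref{LE22}, and the Sobolev embedding on the bounded domain $\Omega$.
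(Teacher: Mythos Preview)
Your proof is correct, and for the continuity into $\mathcal L^{3/2}(\Omega)$ it is essentially what the paper calls ``straightforward'' but spells out less. The genuine difference is in the compactness argument. The paper proceeds by hand: it takes a bounded sequence $(V_k,E_k,\Psi_k)$, extracts a subsequence converging weakly in $\mathcal H(\Omega)$ and strongly in $\mathcal L^3(\Omega)$ via Rellich--Kondrachov, and then verifies term by term (integrating by parts to shift derivatives onto the test function) that $\|A_\rho(V_k,E_k,\Psi_k)-A_\rho(\tilde V,\tilde E,\tilde\Psi)\|_{\mathcal H'(\Omega)}\to 0$. You instead observe that the whole problem factors: once $A_\rho\colon\mathcal H(\Omega)\to\mathcal L^{3/2}(\Omega)$ is known to be continuous and bounded on bounded sets, compactness follows from compactness of the linear inclusion $\iota\colon\mathcal L^{3/2}(\Omega)\to\mathcal H'(\Omega)$, which you get as the Schauder adjoint of the Rellich--Kondrachov embedding $\mathcal H(\Omega)\hookrightarrow\mathcal L^3(\Omega)$. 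Your route is shorter and more conceptual, avoiding the repetitive $\mathcal H'$-estimates; the paper's route is more elementary in that it never invokes Schauder's theorem, and its integration-by-parts bookkeeping (e.g.\ rewriting $\int x\cdot\nabla v_k\,\varphi$ via $x\otimes v_k$) makes explicit how each term is controlled by $\|v_k\|_{L^3}$. Both rely on the same underlying ingredient --- Rellich--Kondrachov on the bounded domain --- so the distinction is one of packaging rather than strength.
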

\begin{proof}
	In fact, with the help of the Sobolev embedding theorem and the assumption of $\rho$, it is straightforward to verify $A_{\rho}:\mathcal{H}(\Omega)\rightarrow\mathcal{L}^{\frac{3}{2}}(\Omega)$ is continuous.  Moreover, every function $h\in \mathcal{L}^{\frac{3}{2}}(\Omega)$ can be identified to an element of $\mathcal{H'}(\Omega)$. Thus , $A_{\rho}:\mathcal{H}(\Omega)\rightarrow\mathcal{H'}(\Omega)$ is continuous.
	
	It remains to show that $A_{\rho}:\mathcal{H}(\Omega)\rightarrow\mathcal{H'}(\Omega)$ is compact. Indeed, by the Sobolev embedding theorem, if $\|(V_{k},E_{k},\Psi_{k})\|_{\mathcal{H}(\Omega)}\leq M$, then, after extracting a subsequence (still denoted as itself), there exists $(\tilde{V},\tilde{E},\tilde{\Psi})\in \mathcal{H}(\Omega)$ such that,
$$
(v_{k},e_{k},\psi_{k})=(\tilde{V}-V_{k},\tilde{E}-E_{k},\tilde{\Psi}-\Psi_{k})\rightharpoonup0\ \ \mbox{in}\ \ \mathcal{H}(\Omega)
$$
and
$$
(v_{k},e_{k},\psi_{k})=(\tilde{V}-V_{k},\tilde{E}-E_{k},\tilde{\Psi}-\Psi_{k})\rightarrow0\ \ \mbox{in}\ \ \mathcal{L}^{3}(\Omega).
$$
For any  $\Phi=(\varphi,\phi,\zeta)\in \mathcal{H}(\Omega)$, we observe that
\begin{equation}\label{eq3-2}
	\begin{split}
	\Big|\int_{\Omega}x\cdot\nabla v_{k}\varphi dx&\Big|
	\leq\Big|\int_{\Omega}(x\otimes v_{k}):\nabla\varphi dx\Big|+3\Big|\int_{\Omega}v_{k}\varphi dx\Big|\\&\leq C\Big(\|v_{k}\|_{L^{3}(\Omega)}\|\nabla\varphi\|_{L^{2}(\Omega)}+\|v_{k}\|_{L^{3}(\Omega)}\|\varphi\|_{L^{\frac{3}{2}}(\Omega)}\Big)\\&\leq
	 C\|v_{k}\|_{L^{3}(\Omega)}\|\varphi\|_{H_{0}^{1}(\Omega)}.
	\end{split}
	\end{equation}
Thus, for the linear part $L_{\rho}$, using the condition \eqref{decay estimate}, along with certain calculations such as \eqref{eq3-2}, there exist a constant $C$ independent on $k$ and $\Phi$ such that
\begin{align*}
	\Big|\int_{\Omega}L_{\rho}&(v_{k},e_{k},\psi_{k})\cdot(\varphi,\phi,\zeta)dx\Big|\\&\leq C\big(\|v_{k}\|_{L^{3}(\Omega)}+\|e_{k}\|_{L^{3}(\Omega)}+\|\psi_{k}\|_{L^{3}(\Omega)}\big)\big(\|\varphi\|_{H_{0}^{1}(\Omega)}+\|\psi\|_{H_{0}^{1}(\Omega)}+ \|\phi\|_{H_{0}^{1}(\Omega)}\big)\\&
	\leq C\big(\|v_{k}\|_{L^{3}(\Omega)}+\|e_{k}\|_{L^{3}(\Omega)}+\|\psi_{k}\|_{L^{3}(\Omega)}\big)\|\Phi\|_{\mathcal{H}(\Omega)},
\end{align*}
where we use condition \eqref{LE22} to deal with the singularity of $\nabla(|\cdot|^{-1})$ at zero.
Then,
\begin{align}\label{Linear part}
	\begin{split}
		\Big\|L_{\rho}(\tilde{V},\tilde{E},\tilde{\Psi})-L_{\rho}(V_{k},E_{k},\Psi_{k})&\Big\|_{\mathcal{H}^{'}(\Omega)}:=\sup_{\|\Phi\|_{\mathcal{H}(\Omega)}=1}
		\Big|\int_{\Omega}L_{\rho}(v_{k},e_{k},\psi_{k})\cdot \Phi dx\Big|\\&\leq C\big(\|v_{k}\|_{L^{3}(\Omega)}+\|e_{k}\|_{L^{3}(\Omega)}+\|\psi_{k}\|_{L^{3}(\Omega)}\big)\rightarrow0.
	\end{split}
\end{align}
For the nonlinear part $N_{\rho}$, notice that
\begin{align*}
\begin{split}
&N_{\rho}(\tilde{V},\tilde{E},\tilde{\Psi})-N_{\rho}(V_{k},E_{k},\Psi_{k})\\&=\bigg(-(v_{k}+V_{k})\nabla v_{k}-v_{k}\nabla V_{k}+(e_{k}+E_{k})\nabla e_{k}+e_{k}\nabla E_{k},\\&-(v_{k}+V_{k})\nabla e_{k}-v_{k}\nabla E_{k}+(e_{k}+E_{k})\nabla v_{k}+e_{k}\nabla V_{k},\nabla\cdot\Big((\psi_{k}+\Psi_{k})v_{k}+\psi_{k}V_{k}\Big)\bigg).
\end{split}
\end{align*}
Then, for any  $\Phi=(\varphi,\phi,\zeta)\in \mathcal{H}(\Omega)$, after some integration by part, we get
\begin{align*}
\begin{split}
\Big|\int_{\Omega}N_{\rho}(\tilde{V}&,\tilde{E},\tilde{\Psi})\cdot \Phi-N_{\rho}(V_{k},E_{k},\Psi_{k})\cdot \Phi dx\Big|\\&\leq CM\big(\|v_{k}\|_{L^{3}(\Omega)}+\|e_{k}\|_{L^{3}(\Omega)}+\|\psi_{k}\|_{L^{3}(\Omega)}\big)\|\Phi\|_{\mathcal{H}(\Omega)},
\end{split}
\end{align*}
where the constant $C$ independent on $k$ and $\Phi$. It follows that
\begin{align}\label{nonlinear part}
	\begin{split}
\|N_{\rho}(\tilde{V},&\tilde{E},\tilde{\Psi})-N_{\rho}(V_{k},E_{k},\Psi_{k})\|_{\mathcal{H'}(\Omega)}
\\&=\sup_{\|\Phi\|_{\mathcal{H}(\Omega)}=1}\Big|\int_{\Omega}\Big(N_{\rho}(\tilde{V},\tilde{E},\tilde{\Psi}) -N_{\rho}(V_{k},E_{k},\Psi_{k})\Big)\cdot \Phi dx\Big|\\&\leq CM(\|v_{k}\|_{L^{3}(\Omega)}+\|e_{k}\|_{L^{3}(\Omega)}+\|\psi_{k}\|_{L^{3}(\Omega)})\rightarrow0.
\end{split}
\end{align}
Thus, by \eqref{Linear part} and \eqref{nonlinear part}, we get
\begin{align*}
\big\|A_{\rho}(V_{k},E_{k},\Psi_{k})-A_{\rho}(\tilde{V},\tilde{E},\tilde{\Psi})\big\|_{\mathcal{H'}(\Omega)}\rightarrow0,
\end{align*}
which implies that $A_{\rho}:\mathcal{H}(\Omega)\rightarrow\mathcal{H'}(\Omega)$ is compact.
\end{proof}
Next, using the notation in \eqref{Sim}, we consider the following system in a smooth bounded domain $\Omega\subseteq\R^{3}$,
\begin{equation}\label{ESp2-1}
	\left\{
	\begin{aligned}
		&-\Delta V+\nabla P=V+x\cdot\nabla V-F_{01}-F_{1}+F_{02}+F_{2} +(\Psi+\Theta_{0})\rho\nabla(|\cdot|^{-1}),\\&
		-\Delta E=E+x\cdot\nabla E-F_{03}-F_{3}+F_{04}+F_{4},\\&
		-\Delta\Psi=\Psi+x\cdot\nabla\Psi -\nabla\cdot\big((\Psi+\Theta_{0})(V+U_{0})\big),\\&
		{\rm div}\,V={\rm div}\,E=0,
	\end{aligned}
	\right.
\end{equation}
coupled with the Dirichlet boundary condition
\begin{equation}\label{DC1-1}
	V=E=\Psi=0 \ \ \mbox{on} \ \partial\Omega.
\end{equation}
By applying Lemmas \ref{L2} and \ref{L3}, we can directly derive the following results using the Leray-Schauder fixed point theorem (Theorem \ref{T2.1}).
\begin{prop}(Existence in bounded domain)\label{p2}
Let $\Omega$ be a bounded domain with a smooth boundary, $\rho\in C(\R^{3})$ and satisfies \eqref{LE22}. Assume $(U_{0},B_{0},\Theta_{0})$ satisfy \eqref{decay estimate}. Then the problem \eqref{ESp2-1}-\eqref{DC1-1} has a solution $(V,E,\Psi)\in \mathcal{H}(\Omega)$.
\end{prop}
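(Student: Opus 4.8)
The plan is to recast problem \eqref{ESp2-1}--\eqref{DC1-1} as the fixed-point equation \eqref{re2} specialized to $\lambda=1$, namely $(V,E,\Psi)=S(V,E,\Psi)$ with $S=\mathbb{T}\circ A_{\rho}:\mathcal{H}(\Omega)\to\mathcal{H}(\Omega)$, and then to invoke the Leray--Schauder theorem (Theorem \ref{T2.1}). Accordingly, two things must be verified: that $S$ is a continuous compact self-map of the Banach space $\mathcal{H}(\Omega)$, and that the collection of all $(V,E,\Psi)\in\mathcal{H}(\Omega)$ solving $(V,E,\Psi)=\lambda S(V,E,\Psi)$ for some $\lambda\in[0,1]$ is bounded in $\mathcal{H}(\Omega)$ by a constant that does not depend on $\lambda$.

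For the first point I would argue as follows. By the Riesz representation theorem the map $\mathbb{T}:\mathcal{H}'(\Omega)\to\mathcal{H}(\Omega)$ is a bounded linear isomorphism with $\|\mathbb{T}f\|_{\mathcal{H}(\Omega)}\le\|f\|_{\mathcal{H}'(\Omega)}$. Since $\rho\in C(\R^{3})$ satisfies \eqref{LE22}, Lemma \ref{L3} shows that $A_{\rho}:\mathcal{H}(\Omega)\to\mathcal{H}'(\Omega)$ is continuous and compact. Composing a bounded linear map with a continuous compact map, we conclude that $S=\mathbb{T}\circ A_{\rho}:\mathcal{H}(\Omega)\to\mathcal{H}(\Omega)$ is continuous and sends bounded sets to relatively compact sets; that is, $S$ is a compact mapping of $\mathcal{H}(\Omega)$ into itself, as demanded by Theorem \ref{T2.1}.

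For the uniform bound, note first that unwinding the Riesz identity behind \eqref{re2} shows that any $(V,E,\Psi)\in\mathcal{H}(\Omega)$ with $(V,E,\Psi)=\lambda S(V,E,\Psi)$ is exactly a weak solution (tested against $\Upsilon\in\mathcal{C}^{\infty}_{c,\sigma}(\Omega)$) of the boundary value problem \eqref{ESp2}--\eqref{DC1} for that value of $\lambda$. Since the hypotheses of Lemma \ref{L2} are met here — $\Omega$ is a bounded domain with smooth boundary, $(U_{0},B_{0},\Theta_{0})$ satisfy \eqref{decay estimate}, and $\rho\in L^{\infty}(\R^{3})$ — that lemma gives
\[
\|(V,E,\Psi)\|_{\mathcal{H}(\Omega)}^{2}=\int_{\Omega}\big(|\nabla V|^{2}+|\nabla E|^{2}+|\nabla\Psi|^{2}\big)\,dx\le C_{\ast},
\]
with $C_{\ast}=C_{\ast}(\Omega,\rho,U_{0},B_{0},\Theta_{0})$ independent of $\lambda$. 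Thus hypothesis (ii) of Theorem \ref{T2.1} holds with, e.g., $M=\sqrt{C_{\ast}}+1$, and Theorem \ref{T2.1} yields a fixed point $(V,E,\Psi)\in\mathcal{H}(\Omega)$ of $S$, i.e. a weak solution of \eqref{ESp2-1}--\eqref{DC1-1}; a pressure $P\in D^{1,3/2}(\Omega)\cap L^{3}(\Omega)$ can then be recovered by de Rham's theorem \cite{TR}.

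I do not anticipate a genuine obstacle in this argument: the entire difficulty has already been isolated in Lemma \ref{L2} (the blow-up \emph{a priori} estimate, the core of the paper) and in Lemma \ref{L3}, so the proof of Proposition \ref{p2} is essentially a bookkeeping step assembling those two results. The only points that need a word of care are the identification of fixed points of $S$ with $\lambda$-weak-solutions of \eqref{ESp2}--\eqref{DC1}, which is immediate from the derivation of \eqref{re2}, and the observation that the proof of Lemma \ref{L2} uses only $\rho\in L^{\infty}(\R^{3})$ (via the Hardy and Lorentz-space estimates on the terms $I_{4},I_{5}$), so its conclusion remains available under the weaker hypothesis $\rho\in C(\R^{3})$ imposed in the present statement.
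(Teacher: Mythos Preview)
Your proposal is correct and follows exactly the paper's approach: the paper simply states that Proposition \ref{p2} follows directly from Lemmas \ref{L2} and \ref{L3} via the Leray--Schauder theorem (Theorem \ref{T2.1}), and you have merely spelled out the bookkeeping in more detail. Your observation that the proof of Lemma \ref{L2} only uses $\rho\in L^{\infty}(\R^{3})$ (so the slight discrepancy between the smoothness hypotheses on $\rho$ in Lemma \ref{L2} and Proposition \ref{p2} is harmless) is a fair point that the paper leaves implicit.
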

\subsection{Existence of solutions to the perturbed Leray system in the whole space}
The goal of this subsection is to establish the existence of weak solutions for the perturbed Leray system \eqref{ES2} in the whole space. First, we prove the existence of solution to problem \eqref{ESp2-1}-\eqref{DC1-1} in any ball. In what follows, let $\rho\in C^{\infty}(\R^{3}),\ 0\leq\rho(x)\leq1$ such that $\rho(x)=0$ if $|x|\leq\frac{1}{2}$ and $\rho(x)=1$ if $|x|\geq1$. Moreover, we set $\rho_{k}(x):=\rho(kx)$. Then we have the following uniform estimate.

\begin{prop}\label{p3}
Suppose that $(U_{0},B_{0},\Theta_{0})$ satisfies \eqref{decay estimate} and $(V_{k},E_{k},\Psi_{k})\in \mathcal{H}(B_{k})$ be a solution of problem \eqref{ESp2-1}-\eqref{DC1-1} with $\Omega=B_{k}$ and $\rho=\rho_{k}$. Then we have
$$
\int_{B_{k}}(|V_{k}|^{2}+|E_{k}|^{2}+|\Psi_{k}|^{2}+|\nabla V_{k}|^{2}+|\nabla E_{k}|^{2}+|\nabla\Psi_{k}|^{2})dx\leq C_{\ast},
$$
where the constant $C_{\ast}=C_{\ast}(U_{0},B_{0},\Theta_{0})$>0, independent on $k$.
\end{prop}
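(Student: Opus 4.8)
The plan is to re-run the blow-up argument of Lemma~\ref{L2} for the exhausting family $\{B_k\}$, now keeping every constant independent of $k$. First I would establish that the whole bound reduces, with $k$-independent constants, to a bound on $L_k^2:=\int_{B_k}(|\nabla V_k|^2+|\nabla E_k|^2)$. Testing the $\Psi_k$-equation with $\Psi_k$ and using $|\Theta_0|\le C(1+|x|)^{-1}\le C|x|^{-1}$ together with Hardy's inequality — whose constant is universal, so this replaces the domain-dependent Poincaré inequality used in \eqref{LE4} — gives $\int_{B_k}(|\Psi_k|^2+|\nabla\Psi_k|^2)\le C\int_{B_k}|\nabla V_k|^2+C$. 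Testing the $V_k$- and $E_k$-equations with $V_k$ and $E_k$ and \emph{adding} the two resulting identities, the leading nonlinearity cancels by Proposition~\ref{p11} (namely $f(u,v,w)+f(u,w,v)=0$ and $f(u,v,v)=0$), so every surviving term carries a factor $\nabla U_0$, $\nabla B_0$, $F_{0j}$ or $\rho_k\nabla(|\cdot|^{-1})$; estimating these by Hardy (e.g. $\int|\nabla U_0||V_k|^2\lesssim\int|x|^{-1}|V_k|^2\le\varepsilon\|V_k\|_2^2+C_\varepsilon\|\nabla V_k\|_2^2$) and by the $k$-uniform versions of \eqref{LE11}--\eqref{LE12} — uniform because $|\rho_k\nabla(|\cdot|^{-1})|\le|x|^{-2}$ pointwise and $\|\rho_k\nabla(|\cdot|^{-1})\|_{L^{3/2,\infty}(\R^3)}\le\|\,|x|^{-2}\|_{L^{3/2,\infty}(\R^3)}$ for every $k$ — yields $\int_{B_k}(|V_k|^2+|E_k|^2)\le CL_k^2+C$. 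Thus the assertion follows once $L_k^2\le C_\ast$ with $C_\ast$ independent of $k$.

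Arguing by contradiction, suppose $L_k\to\infty$ along a subsequence. Set $J_k^2=\int_{B_k}|\nabla\Psi_k|^2$, normalize $\hat V_k=V_k/L_k$, $\hat E_k=E_k/L_k$, $\hat\Psi_k=\Psi_k/J_k$, and extend all functions by zero to $\R^3$. By the reductions, $\hat V_k,\hat E_k$ are bounded in $H^1(\R^3)$ — not merely in $\dot H^1$ — and $\hat\Psi_k$ in $\dot H^1(\R^3)$, with $J_k^2\le CL_k^2+C$; so, exactly as in Lemma~\ref{L2}, $\alpha:=\lim L_k/J_k$ is either in $(0,\infty)$ or $+\infty$. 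Extract weak limits $(\bar V,\bar E,\bar\Psi)$ with strong convergence in $L^p_{loc}$, $p<6$; in \textbf{Case~1} ($0<\alpha<\infty$, $J_k\to\infty$) one also has $\hat\Psi_k$ bounded in $L^2(\R^3)$ since $\int|\hat\Psi_k|^2=J_k^{-2}\int|\Psi_k|^2\le J_k^{-2}(CL_k^2+C)\to C\alpha^2$. I would then follow Lemma~\ref{L2}: the compactly supported test-function identities forcing $\bar V\cdot\nabla\bar\Psi=0$ are untouched by the growing domains, and the only new point — passing to the limit in $\frac1{J_k^2}\int\nabla\cdot(\Theta_0(V_k+U_0))\Psi_k$ — is handled by splitting $\R^3=B_R\cup(\R^3\setminus B_R)$: on $B_R$ use strong $L^2_{loc}$ convergence, while on the complement $\int_{\R^3\setminus B_R}|\Theta_0|\,|V_k|\,|\nabla\Psi_k|\lesssim R^{-1}\|V_k\|_2\|\nabla\Psi_k\|_2$ (and similarly after moving the derivative onto $\nabla\Theta_0$) is small uniformly in $k$ by the $L^2$-bounds; letting $k\to\infty$ then $R\to\infty$, the limit is a multiple of $\int(\nabla\Theta_0\cdot\bar V)\bar\Psi$, which vanishes once $\bar V\cdot\nabla\bar\Psi=0$. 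Hence \eqref{LE7} becomes $1+\tfrac12\int|\bar\Psi|^2\le0$, impossible.

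In \textbf{Case~2} ($L_k/J_k\to\infty$), add the $V_k$- and $E_k$-energy identities, divide by $L_k^2$, and pass to the limit as in \eqref{LE9}--\eqref{LE13}: the cancellation $I_3+I_8=0$ is exact, the gravitational terms and the forcing terms ($|F_{0j}|\lesssim(1+|x|)^{-2}$) tend to $0$ uniformly, the far field being again controlled on $\R^3\setminus B_R$ via the decay of the data and the uniform $L^2(\R^3)$-bounds, and $\bar V,\bar E\in\mathbf{H}^1_{0,\sigma}(\R^3)$ solve the Euler-type system \eqref{L1} on $\R^3$. By the De~Rham theorem~\cite{TR} there is $\bar P$ with $\nabla\bar P=\bar E\cdot\nabla\bar E-\bar V\cdot\nabla\bar V\in L^{3/2}(\R^3)$ and $\bar P\in L^3(\R^3)$ (fixing the additive constant). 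In the analogue of \eqref{LE21} the pressure term vanishes \emph{directly}, replacing the use of Lemma~\ref{L0}: since $\nabla\bar P\cdot U_0\in L^1(\R^3)$ and $\nabla\cdot U_0=0$,
$$
\int_{\R^3}\nabla\bar P\cdot U_0\,dx=\lim_{j\to\infty}\int_{\partial B_{R_j}}\bar P\,U_0\cdot n\,dS=0,
$$
where $R_j\to\infty$ is chosen with $R_j\int_{\partial B_{R_j}}|\bar P|^3\,dS\to0$ (possible since $\bar P\in L^3(\R^3)$) and $|U_0|\le C(1+|x|)^{-1}$ on $\partial B_{R_j}$ is used. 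Thus $1+\tfrac12\int|\bar V|^2+\tfrac12\int|\bar E|^2=0$, again impossible. Both cases being excluded, $L_k^2\le C_\ast$, and the reductions finish the proof.

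The principal difficulty is not a single delicate inequality but the bookkeeping that makes the whole blow-up $k$-uniform as $B_k\uparrow\R^3$. Three points carry this: (i) every ``absorption'' step must be routed through the domain-free Hardy inequality rather than Poincaré; (ii) the crucial $L^2(\R^3)$-bounds on the normalized fields come from the \emph{combined} $V$-plus-$E$ energy identity, in which the leading nonlinearity cancels by Proposition~\ref{p11}; and (iii) these bounds, together with the decay \eqref{decay estimate} of $U_0,B_0,\Theta_0$, render the limit of every coupling integral uniformly convergent over the exhausting sequence. The one structurally new ingredient is the flux-at-infinity identity above, which on $\R^3$ plays the role that Lemma~\ref{L0} plays in bounded domains; I expect verifying its hypotheses ($\bar P\in L^3$, $\nabla\bar P\cdot U_0\in L^1$) and the uniform far-field estimates in (iii) to be where the real work lies.
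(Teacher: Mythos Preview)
Your proposal is correct, and the overall architecture --- reduction to the $(V,E)$-part, contradiction via blow-up, the two-case dichotomy on $L_k/J_k$ --- matches the paper. There are two places where your route diverges from the paper's.

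First, a bookkeeping difference: the paper sets $L_k^2=\int_{B_k}(\tfrac12|V_k|^2+\tfrac12|E_k|^2+|\nabla V_k|^2+|\nabla E_k|^2)$ and $J_k^2=\int_{B_k}(\tfrac12|\Psi_k|^2+|\nabla\Psi_k|^2)$, i.e.\ the full $H^1$ norms rather than the Dirichlet parts. The $L^2$ contribution appears for free on the left of the energy identity from the term $V_k+x\cdot\nabla V_k$ (integration by parts of $\int x\cdot\nabla V_k\cdot V_k$ gives $-\tfrac32\int|V_k|^2$), so no Hardy-based reduction is needed to recover the $L^2$ bound; your route via Hardy is valid but costs an extra step. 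Likewise, the paper's analogue of \eqref{LE4} uses only $\|\Theta_0\|_\infty$.

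Second, and more interestingly, in Case~2 you introduce the pressure $\bar P$ via De~Rham and close with a flux-at-infinity argument. The paper bypasses the pressure entirely: having shown $\int_{\R^3}(\bar V\cdot\nabla\bar V-\bar E\cdot\nabla\bar E)\varphi\,dx=0$ for every $\varphi\in\mathbf C^\infty_{0,\sigma}(\R^3)$, it simply approximates $U_0\in\mathbf L^4_\sigma(\R^3)$ in the $\mathbf L^4$-norm by such test functions (there being no boundary) to get $\int_{\R^3}(\bar V\cdot\nabla\bar V-\bar E\cdot\nabla\bar E)U_0\,dx=0$ directly, and similarly for $B_0$ with the second Euler-type equation; this makes the right side of the limiting identity vanish. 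So your flux computation is a legitimate replacement for Lemma~\ref{L0}, but on $\R^3$ the paper shows that no replacement is needed at all --- the obstruction that forced Lemma~\ref{L0} in the bounded case (namely that $U_0$ does not vanish on $\partial\Omega$ and hence cannot be approximated by $\mathbf C^\infty_{0,\sigma}(\Omega)$) simply disappears when $\Omega=\R^3$.
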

\begin{proof}
	This proof is quite similar to Lemma \ref{L2}, and we sketch the proof. As in \eqref{LE4}, multiplying the third equation of $\eqref{ESp2-1}$ by $\Psi_{k}$ yields
\begin{equation}\label{LE25}
\frac{1}{2}\int_{B_{k}}|\Psi_{k}|^{2}dx+\frac{1}{2}\int_{B_{k}}|\nabla\Psi_{k}|^{2}dx\leq
\Big(\|\Theta_{0}\|^{2}_{\infty}\int_{B_{k}}|V_{k}|^{2}dx+\int_{\R^{3}}|\Theta_{0}U_{0}|^{2}dx\Big).
\end{equation}
Thus, we only need to prove that
\begin{equation}\label{LE26}
\int_{B_{k}}\Big(\frac{1}{2}|V_{k}|^{2}+\frac{1}{2}|E_{k}|^{2}+|\nabla V_{k}|^{2}+|\nabla E_{k}|^{2}\Big)dx\leq C_{\ast}.
\end{equation}
We argue by contradiction, assume that \eqref{LE26} does not hold. Then, after taking a subsequence, there exists a sequence of solutions $(V_{k},E_{k},\Psi_{k})\in\mathcal{H}(B_{k})$ to problem \eqref{ESp2-1}-\eqref{DC1-1} with $\Omega=B_{k}$ and $\rho=\rho_{k}$ such that
$$
L_{k}^{2}:=\int_{B_{k}}\Big(\frac{1}{2}|V_{k}|^{2}+\frac{1}{2}|E_{k}|^{2}+|\nabla V_{k}|^{2}+|\nabla E_{k}|^{2}\Big)dx\rightarrow+\infty.
$$
Moreover, we also set
$$
J_{k}^{2}:=\int_{B_{k}}\Big(\frac{1}{2}|\Psi_{k}|^{2}+|\nabla \Psi_{k}|^{2}\Big)dx.
$$
Consider the normalized sequence of $(V_{k},E_{k})$ and $\Psi_{k}$
\begin{equation}\label{LE27}
\hat{V}_{k}=\frac{V_{k}}{L_{k}},\ \ \hat{E}_{k}=\frac{E_{k}}{L_{k}},\ \ \mbox{and}\ \ \hat{\Psi}_{k}=\frac{\Psi_{k}}{J_{k}}.
\end{equation}
By the definition \eqref{LE27}, the sequence $(\hat{V}_{k},\hat{E}_{k},\hat{\Psi}_{k})$ is bounded in $\mathcal{H}(B_{k})$. Thus, via the classical extension theorem, there exists $(\bar{V},\bar{E},\bar{\Psi})\in\mathcal{H}(\R^{3})$, such that
\begin{equation}\label{weak con}
(\hat{V}_{k},\hat{E}_{k},\hat{\Psi}_{k})\rightharpoonup(\bar{V},\bar{E},\bar{\Psi}),\ \ \mbox{in}\ \ \mathcal{H}(\R^{3})
\end{equation}
and
\begin{equation}\label{strong con}
(\hat{V}_{k},\hat{E}_{k},\hat{\Psi}_{k})\rightarrow(\bar{V},\bar{E},\bar{\Psi}),\ \ \mbox{locally in}\ \ \mathcal{L}^{p}(\R^{3})\ \ \mbox{for all}\ \ 1\leq p<6.
\end{equation}
As in Lemma \ref{L2}, the situation is divided into two cases depending on whether or not $L_{k}/J_{k}$ tends to infinity as $k\rightarrow+\infty$.

\textbf{Case 1: $\overline{\lim}_{k\rightarrow+\infty}L_{k}/J_{k}<+\infty$.}
  We may assume that $L_{k}/J_{k}\rightarrow\alpha\in[0,\infty)$ by taking a subsequence. Since $L_{k}\rightarrow+\infty$, we must have $J_{k}\rightarrow\infty$. Thus, \eqref{LE25} implies that $\alpha>0$. Similar to \eqref{LE2}, we also have
\begin{equation}\label{LE28}
\frac{1}{2}\int_{B_{k}}|\Psi_{k}|^{2}dx+\int_{B_{k}}|\nabla\Psi_{k}|^{2}dx=-\int_{B_{k}}\nabla\cdot\big(\Theta_{0}(V_{k}+U_{0})\big)\Psi_{k}dx.
\end{equation}
Multiplying \eqref{LE28} by $1/J_{k}^{2}$ and taking $k\rightarrow\infty$, we can easily get
$$
\frac{1}{J_{k}^{2}}\Big(\frac{1}{2}\int_{B_{k}}|\Psi_{k}|^{2}+\int_{B_{k}}|\nabla\Psi_{k}|^{2}\Big)dx=1
$$
and
$$
\frac{1}{J_{k}^{2}}\int_{B_{k}}\nabla\cdot(\Theta_{0}U_{0})\Psi_{k}dx\rightarrow0.
$$
For the last term of \eqref{LE28}, we claim that
\begin{equation}\label{LE29}
\frac{1}{J_{k}^{2}}\int_{B_{k}}\nabla\cdot(\Theta_{0}V_{k})\Psi_{k}dx=\frac{1}{J_{k}^{2}}\int_{B_{k}}\nabla\Theta_{0}\cdot V_{k}\Psi_{k}dx\rightarrow\alpha\int_{\R^{3}}\nabla\Theta_{0}\cdot\bar{V}\bar{\Psi}dx.
\end{equation}
Notice that,
\begin{align*}\label{LE30}
\begin{split}
&\Big|\frac{1}{J_{k}^{2}}\int_{B_{k}}\nabla\Theta_{0}\cdot V_{k}\Psi_{k}dx-\alpha\int_{\R^{3}}\nabla\Theta_{0}\cdot\bar{V}\bar{\Psi}dx\Big|\\&\leq
\Big|\int_{B_{k}}\Big(\frac{L_{k}}{J_{k}}\nabla\Theta_{0}\cdot \hat{V}_{k}\hat{\Psi}_{k}-\alpha\nabla\Theta_{0}\cdot\bar{V}\bar{\Psi}\Big)dx\Big|+
\alpha\|\nabla\Theta_{0}\|_{L^{4}(B_{k}^{c})}\|\bar{V}\|_{L^{\frac{8}{3}}(\R^{3})}\|\bar{\Psi}\|_{L^{\frac{8}{3}}(\R^{3})}\\&=I_{1}+I_{2}.
\end{split}
\end{align*}
The assume of $\Theta_{0}$ implies $|\nabla\Theta_{0}|\in L^{4}(\R^{3})$, thus $I_{2}\rightarrow0$ as $k\rightarrow\infty$.
On the other hand,
for any bounded domain $\Omega$, we can take $k$ large enough such that $\Omega\subset B_{k}$. Then, for any $\varepsilon>0$, since the Banach extension theorem is bounded, using \eqref{strong con} we have
\begin{align*}
	\begin{split}
I_{1}&\leq \int_{\Omega}\Big|\frac{L_{k}}{J_{k}}\nabla\Theta_{0}\cdot \hat{V}_{k}\hat{\Psi}_{k}-\alpha\nabla\Theta_{0}\cdot\bar{V}\bar{\Psi}\Big|dx+\int_{\Omega^{c}}\Big|\frac{L_{k}}{J_{k}}\nabla\Theta_{0}\cdot \hat{V}_{k}\hat{\Psi}_{k}-\alpha\nabla\Theta_{0}\cdot\bar{V}\bar{\Psi}\Big|dx\\&\leq  \int_{\Omega}\Big|\Big(\frac{L_{k}}{J_{k}}-\alpha\Big)\nabla\Theta_{0}\cdot \hat{V}_{k}\hat{\Psi}_{k}\Big|dx+\int_{\Omega}\Big|\alpha\nabla\Theta_{0}\cdot\big(\hat{V}_{k}-\bar{V}\big)\hat{\Psi}_{k}\Big|dx\\&+\int_{\Omega}\Big|\alpha\nabla\Theta_{0}\cdot\bar{V}_{k}\big(\hat{\Psi}_{k}-\bar{\Psi}\big)\Big|dx+C\|\nabla\Theta_{0}\|_{L^{4}(\Omega^{c})}\\&\leq C\bigg(\Big|\frac{L_{k}}{J_{k}}-\alpha\Big|+\|\hat{V}_{k}-\bar{V}\|_{L^{\frac{8}{3}}(\Omega)}+\|\hat{\Psi}_{k}-\bar{\Psi}\|_{L^{\frac{8}{3}}(\Omega)}\bigg)+C\|\nabla\Theta_{0}\|_{L^{4}(\Omega^{c})}\\&<\varepsilon,
\end{split}
\end{align*}
where we can take $\Omega=B_{R}$, $R<k$ large enough. Hence, \eqref{LE29} was proved, and go back to \eqref{LE28}, we have
\begin{equation}\label{LE300}
1=-\alpha\int_{\R^{3}}\nabla\cdot(\Theta_{0}\bar{V})\bar{\Psi}dx.
\end{equation}
Now, for any $\zeta\in C_{0}^{\infty}(\R^{3})$, we consider the weak formulation of the third equation of system $\eqref{ESp3}$, 
\begin{equation}\label{LE31}
\int_{B_{k}}\nabla\Psi_{k}\nabla\zeta dx=\int_{B_{k}}(\Psi_{k}+x\cdot\nabla\Psi_{k})\zeta dx-\int_{B_{k}}\nabla\cdot\big((\Theta_{0}+\Psi_{k})(V_{k}+U_{0})\big)\zeta dx,
\end{equation}
where we take $k$ large enough such that $\supp\phi\subset B_{k}$. Dividing by $J_{k}^{2}$ in the equation \eqref{LE31} and letting $k\rightarrow+\infty$, as in \eqref{LE29},
$$
\frac{1}{L_{k}^{2}}\int_{B_{k}}\nabla\cdot(\Psi_{k}V_{k})\phi dx\rightarrow\alpha\int_{\R^{3}}\nabla(\bar{\Psi}\bar{V})\phi dx.
$$
The other term in \eqref{LE31} tend to 0 as $k\rightarrow+\infty$. Then it follows that
$$
\alpha\int_{\R^{3}}\nabla\cdot(\bar{\Psi}\bar{V})\phi dx=0,\ \ \ \mbox{for all}\ \phi\in C_{0}^{\infty}(\R^{3}).
$$
Using the fact that $\alpha>0$, we obtain $\nabla\cdot(\bar{\Psi}\bar{V})=\bar{V}\cdot\nabla\bar{\Psi}=0$. However, by \eqref{LE300}, this leads to the contradiction:
$$
1=-\alpha\int_{\R^{3}}\nabla\cdot(\Theta_{0}\bar{V})\bar{\Psi}dx=\alpha\int_{\R^{3}}\Theta_{0}\bar{V}\cdot\nabla\bar{\Psi}dx=0.
$$
Therefore, we excludes Case 1.

\textbf{Case 2: $\overline{\lim}_{k\rightarrow+\infty}L_{k}/J_{k}=+\infty$.}
After extracting a subsequence, we assume $J_{k}/L_{k}\rightarrow0$ as $k\rightarrow+\infty$. Multiplying first equation of system \eqref{ESp2-1} by $V_{k}$ and the second equation of system \eqref{ESp2-1} by $E_{k}$, we get
\begin{align}\label{LE32}
	\begin{split}
		&\frac{1}{2}\int_{B_{k}}|V_{k}|^{2}dx+\int_{B_{k}}|\nabla V_{k}|^{2}dx\\&=\int_{B_{k}}(B_{0}\cdot\nabla B_{0}-U_{0}\cdot\nabla U_{0})V_{k}dx+\int_{B_{k}}(E_{k}\cdot\nabla B_{0}-V_{k}\cdot\nabla U_{0})V_{k}dx
		\\&+\int_{B_{k}}\big((E_{k}+B_{0})\cdot\nabla E_{k}-(V_{k}+U_{0})\cdot\nabla V_{k}\big)V_{k}dx\\&+\int_{B_{k}}\Psi_{k}\rho_{k}\nabla(|\cdot|^{-1})V_{k}dx+\int_{B_{k}}\Theta_{0}\rho_{k}\nabla(|\cdot|^{-1})V_{k}dx
	\end{split}
\end{align}
and
\begin{align}\label{LE33}
	\begin{split}
		&\frac{1}{2}\int_{B_{k}}|E_{k}|^{2}dx+\int_{B_{k}}|\nabla E_{k}|^{2}dx\\&=\int_{B_{k}}(B_{0}\cdot\nabla U_{0}-U_{0}\cdot\nabla B_{0})E_{k}dx+\int_{B_{k}}(E_{k}\cdot\nabla U_{0}-V_{k}\cdot\nabla B_{0})E_{k}dx\\&
		+\int_{B_{k}}\big((E_{k}+B_{0})\cdot\nabla V_{k}-(V_{k}+U_{0})\cdot\nabla E_{k}\big)E_{k}dx.
	\end{split}
\end{align}
Let us add \eqref{LE32} and \eqref{LE33}, multiply by $1/L_{k}^{2}$ and taking the limit as $k\rightarrow\infty$. By \eqref{weak con}-\eqref{strong con}, following similar calculations as in Case 2 of Lemma \ref{L2}, we have
\begin{align}\label{LE34}
\begin{split}
1=\int_{\R^{3}}\bar{E}\cdot\nabla U_{0}\bar{E}dx-\int_{\R^{3}}\bar{V}\cdot\nabla U_{0}\bar{V}dx+\int_{\R^{3}}\bar{E}\cdot\nabla B_{0}\bar{V}dx-\int_{\R^{3}}\bar{V}\cdot\nabla B_{0}\bar{E}dx,
\end{split}
\end{align}
where we use the fact that $J_{k}/L_{k}\rightarrow0$ as $k\rightarrow+\infty$.
Then we multiply by $1/L_{k}^{2}$ of the first equation and second equation of \eqref{ESp2-1}, we obtain
\begin{align}\label{LE141}
	\begin{split}
		\hat{V_{k}}\cdot\nabla&\hat{V_{k}}-\hat{E_{k}}\cdot\nabla \hat{E_{k}}+\nabla\hat{P_{k}}\\&=\frac{1}{L_{k}}\bigg(\Delta \hat{V_{k}}+\hat{V_{k}}+x\cdot\nabla\hat{V_{k}}+\frac{1}{L_{k}}B_{0}\cdot\nabla B_{0}-\frac{1}{L_{k}}U_{0}\cdot\nabla U_{0}         \\&-U_{0}\nabla\hat{V_{k}}-\hat{V_{k}}\nabla U_{0}+B_{0}\nabla\hat{E_{k}}+\hat{E_{k}}\nabla B_{0}+\Big(\frac{J_{k}}{L_{k}}\hat{\Psi}_{k}+\frac{1}{L_{k}}\Theta_{0}\Big)\rho_{k}\nabla(|\cdot|^{-1})\bigg)\\
	\end{split}
\end{align}
and
\begin{align}\label{LE151}
	\begin{split}
		\hat{V_{k}}\cdot\nabla\hat{E_{k}}-\hat{E_{k}}\cdot\nabla \hat{V_{k}}=\frac{1}{L_{k}}\bigg(\Delta \hat{E_{k}}&+\hat{E_{k}}+x\cdot\nabla\hat{E_{k}}+\frac{1}{L_{k}}B_{0}\cdot\nabla U_{0}-\frac{1}{L_{k}}U_{0}\cdot\nabla B_{0}  \\&         -U_{0}\nabla\hat{E_{k}}-\hat{E_{k}}\nabla B_{0}+B_{0}\nabla\hat{V_{k}}+\hat{E_{k}}\nabla U_{0}\bigg).
	\end{split}
\end{align}
We consider the weak formulation of \eqref{LE141} and \eqref{LE151} by testing with arbitrary $\varphi\in \mathbf{C}_{0,\sigma}^{\infty}(\R^{3})$ and $\phi\in \mathbf{C}_{0,\sigma}^{\infty}(\R^{3})$, respectively. Taking the limit as $k\rightarrow+\infty$ yields
\begin{equation*}\label{LE36}
\int_{\R^{3}}(\bar{V}\cdot\nabla\bar{V}-\bar{E}\cdot\nabla\bar{E})\varphi dx=0,\ \ \ \mbox{for all}\ \ \varphi\in \mathbf{C}_{0,\sigma}^{\infty}(\R^{3})
\end{equation*}
and
\begin{equation*}\label{LE37}
\int_{\R^{3}}(\bar{V}\cdot\nabla\bar{E}-\bar{E}\cdot\nabla\bar{V})\phi dx=0,\ \ \ \mbox{for all}\ \ \phi\in \mathbf{C}_{0,\sigma}^{\infty}(\R^{3}).
\end{equation*}
Then, approximating $U_{0}$ by the test function $\varphi\in \mathbf{C}_{0,\sigma}^{\infty}(\R^{3})$ in $L^{4}$-norm and approximate $B_{0}$ by the test function $\phi\in \mathbf{C}_{0,\sigma}^{\infty}(\R^{3})$ in $L^{4}$-norm implies that
$$
-\int_{\R^{3}}\bar{V}\cdot\nabla U_{0}\bar{V}dx+\int_{\R^{3}}\bar{E}\cdot\nabla U_{0}\bar{E}dx=\int_{\R^{3}}(\bar{V}\cdot\nabla\bar{V}-\bar{E}\cdot\nabla\bar{E})U_{0}dx=0
$$
and
$$
\int_{\R^{3}}\bar{E}\cdot\nabla B_{0}\bar{V}dx-\int_{\R^{3}}\bar{V}\cdot\nabla B_{0}\bar{E}dx=\int_{\R^{3}}(\bar{V}\cdot\nabla\bar{E}-\bar{E}\cdot\nabla\bar{V})B_{0}dx=0,
$$
which contradicts \eqref{LE34}. Thus we excludes Case 2 and complete the proof of the Proposition.
\end{proof}
\begin{thm}(Existence in the whole space)\label{T1.2}
Let $(U_{0},B_{0},\Theta_{0})$ satisfies \eqref{decay estimate}. Then the perturbed Leray system \eqref{ES2} has a weak solution $(V,E,\Psi)\in \mathcal{H}(\R^{3})$.
\end{thm}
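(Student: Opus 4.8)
The plan is to obtain the solution in $\R^3$ by passing to the limit in the solutions on the exhausting balls $B_k$ supplied by Proposition \ref{p2}, controlled by the uniform a priori bound of Proposition \ref{p3}. For each $k$, let $(V_k,E_k,\Psi_k)\in\mathcal{H}(B_k)$ solve \eqref{ESp2-1}-\eqref{DC1-1} with $\Omega=B_k$ and $\rho=\rho_k$; this is legitimate since $\rho_k\in C^\infty(\R^3)$ satisfies \eqref{LE22} (indeed $\rho_k(x)=0$ for $|x|\le 1/(2k)$, so $0\notin\supp\rho_k$, and $\rho_k\nabla(|\cdot|^{-1})\in L^2(\R^3)\cap L^\infty(\R^3)$), and $(U_0,B_0,\Theta_0)$ satisfies \eqref{decay estimate} by Proposition \ref{p1}. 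Extending $(V_k,E_k,\Psi_k)$ by zero outside $B_k$ — which is allowed because they vanish on $\partial B_k$ — we view them in $\mathcal{H}(\R^3)$, and Proposition \ref{p3} gives $\|(V_k,E_k,\Psi_k)\|_{\mathcal{H}(\R^3)}\le C_\ast$ uniformly in $k$.

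Next I would extract a subsequence (not relabelled) with $(V_k,E_k,\Psi_k)\rightharpoonup(V,E,\Psi)$ weakly in $\mathcal{H}(\R^3)$, and, by Rellich's theorem together with a diagonal argument over an exhaustion of $\R^3$, $(V_k,E_k,\Psi_k)\to(V,E,\Psi)$ strongly in $\mathcal{L}^p_{loc}(\R^3)$ for every $p\in[1,6)$. The constraints $\mathrm{div}\,V=\mathrm{div}\,E=0$ survive the weak limit, so item (i) of Definition \ref{D-0} holds. To verify item (ii), fix $\varphi,\phi\in\mathbf{C}_{0,\sigma}^\infty(\R^3)$ and $\zeta\in C_0^\infty(\R^3)$; for $k$ large their supports lie in $B_k$, so $(V_k,E_k,\Psi_k)$ satisfies the weak formulation of \eqref{ESp2-1} tested against $(\varphi,\phi,\zeta)$, which is algebraically identical to the formulation in Definition \ref{D-0}. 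Writing $x\cdot\nabla V_k$ and the quadratic terms in divergence form and integrating by parts moves all derivatives onto the fixed test functions; then strong $\mathcal{L}^2_{loc}$ convergence of $(V_k,E_k,\Psi_k)$, weak $L^2$ convergence of their gradients, and the decay \eqref{decay estimate} of $(U_0,B_0,\Theta_0)$ yield convergence of every such term to its counterpart for $(V,E,\Psi)$. The $E$-equation contains no singular term and is handled identically, and the $\Psi$-equation only involves products of locally $\mathcal{L}^2$-convergent functions with bounded ones.

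The one genuinely delicate point — and the main obstacle — is the singular gravitational term $\int_{\R^3}(\Psi_k+\Theta_0)\rho_k\nabla(|\cdot|^{-1})\varphi\,dx$, where one must both remove the cut-off $\rho_k$ and make sense of the limiting integral near the origin. Since $|\nabla(|\cdot|^{-1})|\le C|x|^{-2}$, on $\{|x|>\delta\}$ one has $\rho_k\equiv1$ for $k>1/\delta$, $|x|^{-2}$ is bounded there, and $\Psi_k\to\Psi$ in $L^2(\supp\varphi\cap\{|x|>\delta\})$; while on $B_\delta$, the Hardy inequality on $\R^3$ gives, uniformly in $k$,
\[
\Big|\int_{B_\delta}\Psi_k\,\rho_k\,\nabla(|\cdot|^{-1})\,\varphi\,dx\Big|
\le C\,\|\varphi\|_{\infty}\,\Big\|\frac{\Psi_k}{|\cdot|}\Big\|_{2}\,\big\||\cdot|^{-1}\big\|_{L^2(B_\delta)}
\le C\,\delta^{1/2}\,\|\nabla\Psi_k\|_{2}\longrightarrow0\quad(\delta\to0),
\]
and similarly $\big|\int_{B_\delta}\Theta_0\,\rho_k\,\nabla(|\cdot|^{-1})\,\varphi\,dx\big|\le C\|\Theta_0\|_{\infty}\|\varphi\|_{\infty}\big\||\cdot|^{-2}\big\|_{L^1(B_\delta)}\le C\delta\to0$. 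Letting first $k\to\infty$ and then $\delta\to0$ (the same bounds applied to the limit $(\Psi,\Theta_0)$ show the target integral converges absolutely) yields convergence to $\int_{\R^3}(\Psi+\Theta_0)\nabla(|\cdot|^{-1})\varphi\,dx$. With this term in hand, all three weak equations of Definition \ref{D-0} hold for $(V,E,\Psi)\in\mathcal{H}(\R^3)$, which proves the theorem.
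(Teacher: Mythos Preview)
Your proof is correct and follows the same overall strategy as the paper: use Proposition~\ref{p2} to get solutions on $B_k$ with $\rho=\rho_k$, invoke Proposition~\ref{p3} for the uniform $\mathcal{H}$-bound, extract a weak limit, and pass to the limit term by term in the weak formulation.

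The one place your argument differs from the paper's is in the treatment of the singular gravitational term $\int(\Psi_k+\Theta_0)\rho_k\nabla(|\cdot|^{-1})\varphi\,dx$. The paper handles this via Hunt's inequality in Lorentz spaces (Lemma~\ref{Holder}): it writes $\big|\int(\Psi_k-\Psi)\rho_k\nabla(|\cdot|^{-1})\varphi\,dx\big|\le\|\Psi_k-\Psi\|_{L^4(\supp\varphi)}\|\nabla(|\cdot|^{-1})\|_{L^{3/2,\infty}}\|\varphi\|_{L^{12,4/3}}$ and then uses $\rho_k\to1$ a.e.\ for the remainder. Your $\delta$-splitting combined with Hardy's inequality is a more elementary alternative that avoids the Lorentz-space machinery entirely; it buys you a self-contained estimate at the cost of a two-step $k\to\infty$, $\delta\to0$ limit. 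Either route works, and yours is arguably cleaner if one does not already have the Lorentz toolbox in hand.
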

\begin{proof}
By Proposition \ref{p2}, there exists a sequence of solutions $(V_{k},E_{k},\Psi_{k})\in \mathcal{H}(B_{k})$ to problem \eqref{ESp2-1}-\eqref{DC1-1} with $\Omega=B_{k}$ and $\rho=\rho_{k}$.
Furthermore, Proposition \ref{p3} enable us to get
$$
\|(V_{k},E_{k},\Psi_{k})\|_{\mathcal{H}(B_{k})}\leq C_{\ast},
$$
where the constant $C_{\ast}=C_{\ast}(U_{0},B_{0},\Theta_{0})>0$ independent on $k$. Thus, after extract a subsequence, there exist $(V,E,\Psi)\in \mathcal{H}(\R^{3})$ such that
\begin{equation}\label{weak con1}
(V_{k},E_{k},\Psi_{k})\rightharpoonup(V,E,\Psi), \ \ \mbox{in}\ \ \mathcal{H}(\R^{3})
\end{equation}
and
\begin{equation}\label{strong con1}
(V_{k},E_{k},\Psi_{k})\rightarrow(V,E,\Psi), \ \ \mbox{locally in}\ \ \mathcal{L}^{p}(\R^{3}),\ \ \mbox{for any} \ \ p\in[1,6).
\end{equation}
It remains to verify that the limit function $(V,E,\Psi)$ is a weak solution of the system \eqref{ES2} in $\R^{3}$.
We consider the weak formulation of problem \eqref{ESp2-1}-\eqref{DC1-1} in $B_{k}$ with $\rho=\rho_{k}$, and let $k\rightarrow+\infty$. For convenience, we only consider the first equation of \eqref{ESp2-1}, the second and third equation can be proved in a similar way and are even easier. Using  \eqref{weak con1} and \eqref{strong con1}, for any $\varphi\in \mathbf{C}_{0,\sigma}^{\infty}(\R^{3})$, taking $k$ large enough such that $\tilde{\Omega}=\supp\varphi\subset B_{k}$, we get
\begin{align}\label{E33-1}
	\begin{split}
		\int_{\R^{3}}&\nabla V_{k}:\nabla\varphi dx-\int_{\R^{3}}V_{k}\varphi dx-\int_{\R^{3}}x\cdot\nabla V_{k}\varphi dx\\&
		\rightarrow\int_{\R^{3}}\nabla V:\nabla\varphi dx-\int_{\R^{3}}V\varphi dx-\int_{\R^{3}}x\cdot\nabla V\varphi dx.
	\end{split}
\end{align}
Notice that,
\begin{align*}
	\begin{split}
		\int_{\R^{3}}V_{k}&\cdot\nabla V_{k}\varphi dx-\int_{\R^{3}}V\cdot\nabla V\varphi dx\\&=\int_{\R^{3}}\big(V_{k}\otimes (V-V_{k})\big)\nabla\varphi dx+\int_{\R^{3}}\big((V-V_{k})\otimes V\big)\nabla\varphi dx\\&\leq\|V_{k}\|_{2}\|V-V_{k}\|_{L^{3}(\tilde{\Omega})}\|\nabla\varphi\|_{6}+\|V-V_{k}\|_{L^{3}(\tilde{\Omega})}\|V\|_{2}\|\nabla\varphi\|_{6}\\&\longrightarrow0,\ \mbox{as}\ \  k\rightarrow\infty,
	\end{split}
\end{align*}
where we use the H\"{o}der inequality and \eqref{strong con1}.
Thus,
\begin{equation}\label{E3.3}
	\int_{\R^{3}}V_{k}\cdot\nabla V_{k}\varphi dx\rightarrow\int_{\R^{3}}V\cdot\nabla V\varphi dx.
\end{equation}
Similarly, we also have
\begin{equation}\label{E3.4}
	\int_{\R^{3}}E_{k}\cdot\nabla E_{k}\varphi dx\rightarrow\int_{\R^{3}}E\cdot\nabla E\varphi dx.
\end{equation}
By \eqref{decay estimate}, we get $|U_{0}|,|\nabla U_{0}|\in L^{\infty}(\R^{3})$, which allows us to get
\begin{align}\label{E34}
	\begin{split}
		&\int_{\R^{3}}U_{0}\nabla V_{k}\varphi dx+\int_{\R^{3}}V_{k}\nabla U_{0}\varphi dx\\&=-\int_{\R^{3}}U_{0}\otimes V_{k}\nabla\varphi dx-\int_{\R^{3}}V_{k}\nabla U_{0}\varphi dx\\&\rightarrow-\int_{\R^{3}}U_{0}\otimes V\nabla\varphi dx-\int_{\R^{3}}V\nabla U_{0}\varphi dx\\&=\int_{\R^{3}}U_{0}\nabla V\varphi dx+\int_{\R^{3}}V\nabla U_{0}\varphi dx.
	\end{split}
\end{align}
As above, since $|B_{0}|,|\nabla B_{0}|\in L^{\infty}(\R^{3})$, we also have
\begin{align}\label{E35}
	\begin{split}
		\int_{\R^{3}}B_{0}\cdot\nabla E_{k}\varphi dx&+\int_{\R^{3}}E_{k}\cdot\nabla B_{0}\varphi dx\\&\rightarrow\int_{\R^{3}}B_{0}\cdot\nabla E\varphi dx+\int_{\R^{3}}E\cdot\nabla B_{0}\varphi dx.
	\end{split}
\end{align}
Finally, we consider the term which contains the singular gravitational force, we claim that
\begin{equation}\label{claim}
\int_{\R^{3}}\Psi_{k}\rho_{k}\nabla(|\cdot|^{-1})\varphi dx\rightarrow\int_{\R^{3}}\Psi\nabla(|\cdot|^{-1})\varphi dx.
\end{equation}
Notice that,
\begin{align*}
	\begin{split}
\Big|\int_{\R^{3}}\Psi_{k}\rho_{k}&\nabla(|\cdot|^{-1})\varphi dx-\int_{\R^{3}}\Psi\nabla(|\cdot|^{-1})\varphi dx\Big|\\&\leq \Big|\int_{\R^{3}}(\Psi_{k}-\Psi)\rho_{k}\nabla(|\cdot|^{-1})\varphi dx\Big|+ \Big|\int_{\R^{3}}\Psi(\rho_{k}-\rho)\nabla(|\cdot|^{-1})\varphi dx\Big|\\&=I_{1}+I_{2}.
\end{split}
\end{align*}
By Lemma \ref{Holder} and \eqref{strong con1}, we have
$$
I_{1}\leq \|\Psi_{k}-\Psi\|_{L^{4}(\tilde{\Omega})}\|\nabla(|\cdot|^{-1})\|_{L^{\frac{3}{2},\infty}(\R^{3})}\|\varphi\|_{L^{12,\frac{4}{3}}(\R^{3})}\rightarrow0.
$$
And since $\rho_{k}(x)\rightarrow1$ a.e. in $\R^{3}$ as $k\rightarrow+\infty$, we get
$
I_{2}\rightarrow0.
$
Thus, \eqref{claim} is proved. Similarly, by \eqref{decay estimate}, $\Theta_{0}\in L^{4}(\R^{3})$, we also have
\begin{equation}\label{claim1}
	\int_{\R^{3}}\Theta_{0}\rho_{k}\nabla(|\cdot|^{-1})\varphi dx\rightarrow\int_{\R^{3}}\Theta_{0}\nabla(|\cdot|^{-1})\varphi dx.
\end{equation}
Collecting \eqref{E33-1}-\eqref{claim1}, 
\begin{align}\label{E36-1}
	\begin{split}
		\int_{\R^{3}}&\nabla V:\nabla\varphi dx+\int_{\R^{3}}(V+U_{0})\cdot\nabla(V+U_{0})\varphi dx\\&=\int_{\R^{3}}\Big(V+x\cdot \nabla V+(E+B_{0})\cdot\nabla(E+B_{0})+(\Psi+\Theta_{0})\nabla(|\cdot|^{-1})\Big)\varphi dx.
	\end{split}
\end{align}
Meanwhile, considering the weak formulation of the second and third equation in system \eqref{ESp2-1} with $\Omega=B_{k}$ and $\rho=\rho_{k}$, taking $k\rightarrow+\infty$, we get
\begin{align}\label{E37-1}
	\begin{split}
\int_{\R^{3}}\nabla E:\nabla\phi dx&+\int_{\R^{3}}(V+U_{0})\cdot\nabla(E+B_{0})\phi dx\\&=\int_{\R^{3}}\Big(E+x\cdot \nabla E+(E+B_{0})\cdot\nabla(V+U_{0})\Big)\phi dx
	\end{split}
\end{align}
and
\begin{equation}\label{E38-1}
\int_{\R^{3}}\nabla \Psi:\nabla\zeta dx=\int_{\R^{3}}\Big(\Psi+x\cdot\nabla\Psi\Big)\zeta dx+\int_{\R^{3}}\Big((\Psi+\Theta_{0})(V+U_{0})\Big)\nabla\zeta dx
\end{equation}
	for all $\phi\in \mathbf{C}_{0,\sigma}^{\infty}(\R^{3})$ and  $\zeta\in C_{0}^{\infty}(\R^{3})$.
By \eqref{E36-1}-\eqref{E38-1}, we conclude that $(V,E,\Psi)$ is a weak solution to the elliptic system \eqref{ES2} in the whole space.
\end{proof}

\section{Proof of Theorem \ref{T1.1}}
In this section, we complete the proof of Theorem \ref{T1.1}.
\begin{proof}[Proof of Theorem \ref{T1.1}] First, we focus on the existence of forward self-similar solution. Setting
\begin{equation*}
	\left\{
\begin{aligned}
v(x,t)=\frac{1}{\sqrt{2t}}V\Big(\frac{x}{\sqrt{2t}}\Big),\\
e(x,t)=\frac{1}{\sqrt{2t}}E\Big(\frac{x}{\sqrt{2t}}\Big),\\
\psi(x,t)=\frac{1}{\sqrt{2t}}\Psi\Big(\frac{x}{\sqrt{2t}}\Big),
\end{aligned}
\right.
\end{equation*}
where $(V,E,\Psi)$ was constructed in Theorem \ref{T1.2}. Let
$(u_{L},b_{L},\theta_{L})$ be as in \eqref{LI}, if we define
\begin{equation*}
	\left\{
\begin{aligned}
u(x,t)=u_{L}(x,t)+v(x,t),\\
b(x,t)=b_{L}(x,t)+e(x,t),\\
\theta(x,t)=\theta_{L}(x,t)+\psi(x,t).
\end{aligned}
\right.
\end{equation*}
Then, by the scaling properties, one can get $(u,b,\theta)$ is a solution of the MHD-Boussinesq system \eqref{E1.1}. 
We now begin to prove that
$$
\big(u(x,t), b(x,t),\theta(x,t)\big)\in BC_{w}([0,+\infty),\mathcal{L}^{3,\infty}(\R^{3})).
$$
By Proposition \ref{p1}, we have
\begin{equation}\label{E4.1}
\big(u_{I}(x,t), b_{I}(x,t),\theta_{I}(x,t)\big)\in BC_{w}([0,+\infty),\mathcal{L}^{3,\infty}(\R^{3}))
\end{equation}
and
\begin{equation*}
	\big(u_{I}(\cdot,t), b_{I}(\cdot,t),\theta_{I}(\cdot,t)\big)\rightarrow(u_{0},b_{0},\theta_{0}),\ \ \mbox{as}\ \ t\rightarrow0^{+}
\end{equation*}
in the weak star topology of $\mathcal{L}^{3,\infty}(\R^{3})$. 	It remain to prove that
\begin{equation*}\label{E4.2}
	\big(v(x,t), e(x,t),\psi(x,t)\big)\in BC_{w}([0,+\infty),\mathcal{L}^{3,\infty}(\R^{3})).
\end{equation*}
Since $(V,E,\Psi)\in\mathcal{H}(\R^{3})$, the Sobolev embedding theorem enable us to get $(V,E,\Psi)\in\mathcal{L}^{p}(\R^{3})$ (for any $2\leq p\leq6$), in particular,
$$
\big\|\big(v(t),e(t),\psi(t)\big)\big\|_{\mathcal{L}^{3}(\R^{3})}=\big\|(V,E,\Psi)\big\|_{\mathcal{L}^{3}(\R^{3})}\leq C\big\|(V,E,\Psi)\big\|_{\mathcal{H}(\R^{3})},
$$
which implies that $\big(v(t),e(t),\psi(t)\big)\in L^{\infty}([0,\infty),\mathcal{L}^{3,\infty}(\R^{3}))$. Next, we consider the continuity of $\big(v(x,t), e(x,t),\psi(x,t)\big)$.
 In fact, we only need to consider the continuity at $t=0$.
By the interpolation theory (see Appendix A), any function $\Upsilon\in \mathcal{L}^{\frac{3}{2},1}(\R^{3})$ can be approximated by functions $\Upsilon_{\epsilon}\in \mathcal{L}^{1}\cap \mathcal{L}^{2}(\R^{3})$.
Notice that, for all $2\leq p\leq6$,
\begin{equation}\label{E4.3}
	\begin{aligned}
		\Big\|\big(v(t),e(t),\psi(t)\big)\Big\|_{\mathcal{L}^{p}(\R^{3})}&=\bigg(\int_{\R^{3}}\Big|\frac{1}{\sqrt{2t}}(V,E,\Psi)\Big(\frac{x}{\sqrt{2t}}\Big)\Big|^{p}dx\bigg)^{\frac{1}{p}}\\&
		=(2t)^{\frac{3}{2p}-\frac{1}{2}}\Big\|(V,E,\Psi)\Big\|_{\mathcal{L}^{p}(\R^{3})}.
	\end{aligned}
\end{equation}
Hence,
$$
\int_{\R^{3}}\big(v(t),e(t),\psi(t)\big)\Upsilon_{\epsilon}dx\leq (2t)^{\frac{1}{4}}\|(V,E,\Psi)\|_{\mathcal{L}^{2}(\R^{3})}\|\Upsilon_{\epsilon}\|_{\mathcal{L}^{2}(\R^{3})}.
$$
By the Dominated convergence theorem, one obtain
$$
\int_{\R^{3}}\big(v(t),e(t),\psi(t)\big)\Upsilon dx\leq Ct^{\frac{1}{4}}\rightarrow0, \ \ \mbox{as}\ \ t\rightarrow0^{+}.
$$
Therefore,
$$\big(v(x,t),e(x,t),\psi(x,t)\big)\in BC_{w}([0,\infty);\mathcal{L}^{3,\infty}(\R^{3})),$$
together with the fact \eqref{E4.1}, we get $(u,b,\theta)\in BC_{w}([0,\infty);\mathcal{L}^{3,\infty}(\R^{3}))$.
Finally, we prove \eqref{T1}, by \eqref{E4.3}, we have
$$
\|u-e^{t\Delta}u_{0}\|_{p}=\Big\|\frac{1}{\sqrt{t}}V\Big(\frac{x}{\sqrt{t}}\Big)\Big\|_{p}\leq Ct^{\frac{3}{2p}-\frac{1}{2}},
$$
and by some readily calculation, we also get
$$
\|\nabla u-\nabla e^{t\Delta}u_{0}\|_{2}=\Big\|\frac{1}{\sqrt{t}}\nabla V\Big(\frac{x}{\sqrt{t}}\Big)\Big\|_{2}\leq Ct^{-\frac{1}{4}}.
$$
Similarly,
$$
\|b-e^{t\Delta}b_{0}\|_{p}\leq Ct^{\frac{3}{2p}-\frac{1}{2}},\ \ \|\theta-e^{t\Delta}\theta_{0}\|_{p}\leq Ct^{\frac{3}{2p}-\frac{1}{2}}
$$
and
$$
\|\nabla b-\nabla e^{t\Delta}b_{0}\|_{2}\leq Ct^{-\frac{1}{4}},\ \ \|\nabla\theta-\nabla e^{t\Delta}\theta_{0}\|_{2}\leq Ct^{-\frac{1}{4}}.
$$
Thus, we finish the proof of Theorem \ref{T1.1}.
\end{proof}

\setcounter{equation}{0}
 \setcounter{equation}{0}

\begin{appendices}
	\appendix
	\renewcommand{\appendixname}{}
	\renewcommand{\theequation}{\thesection \arabic{equation}}
	\setcounter{equation}{0}
	\section{The Lorentz space}
In this section, for the convenience of the reader, we provide the basic definition of Lorentz space and some frequently used results. Given a measurable subset $\Omega\subseteq\R^{n}$, and let $f$ be a measurable function defined on $\Omega$. The non-increasing rearrangement $f^{\ast}(t)$ of the function $f$ is defined as follows:
$$
f^{\ast}(t)=\inf\{\alpha>0:|\{x\in\Omega:|f(x)|>\alpha\}|\leq t\}
$$
where $|\{x\in\Omega:|f(x)|>\alpha\}|$ denote the measure of the set $\{x\in\Omega:|f(x)|>\alpha\}$. 
\begin{defn}\label{D-1}
	Let $1<p<\infty$ and $1\leq q\leq\infty$. The Lorentz space $L^{p,q}(\Omega)$ is the space of all measurable functions for which the following norm is finite:
	\begin{align*}
		\|f\|_{L^{p,q}(\Omega)}=\left \{
		\begin{array}{ll}
			\Big(\int_{0}^{\infty}\big(t^{1/p}f^{\ast}(t)\big)^{q}\frac{dt}{t}\Big)^{1/q}\ \ & {\rm if}\ \ 1< q<\infty,\\
			\sup_{t>0}t^{1/p}f^{\ast}(t)\ \ & {\rm if}\ \ q=\infty.
		\end{array}
		\right.
	\end{align*}
\end{defn}
In particular, when $q=\infty$, the Lorentz space $L^{p,\infty}$ is actually the weak-$L^{p}$ space, and its norm can be equivalently defined as follows:
$$
\|f\|_{L^{p,\infty}(\Omega)}=\sup_{\alpha>0}\alpha|\{x\in\Omega:|f(x)|>\alpha\}|^{1/q}.
$$
By the Definition \ref{D-1}, it follows directly that $L^{p,p}(\Omega)=L^{p}(\Omega)$. For $1<p<\infty$ and $1\leq q_{1}\leq q_{2}\leq\infty$, we have the following continuous embeddings:
\begin{equation}\label{embedd}
L^{p,q_{1}}(\Omega)\hookrightarrow L^{p,q_{2}}(\Omega),
\end{equation}
and the inclusion is known to be strict. It is well know that the function $|x|^{-n/p}$ is in $L^{p,\infty}(\R^{n})$ but not in $L^{p}(\R^{n})$.
Moreover,the Lorentz space can also be defined via interpolation theory(\cite{AF},\cite{LG1},\cite{LR}) as
\begin{equation*}\label{A-1}
L^{p,q}(\Omega)=\big(L^{p_{0}}(\Omega),L^{p_{1}}(\Omega)\big)_{\theta,q},
\end{equation*}
where $1<p_{0}<p<p_{1}<\infty$ and $\frac{1}{p}=\frac{1-\theta}{p_{0}}+\frac{\theta}{p_{1}}$ for some $\theta\in(0,1)$. This definition implies that $L^{p_{0}}(\Omega)\cap L^{p_{1}}(\Omega)$ is dense in $L^{p,q}(\Omega)$, and in particular, 
\begin{equation}\label{A-01}
L^{1}(\Omega)\cap L^{\infty}(\Omega)\  \mbox{is dense in} \  L^{p,q}(\Omega).
\end{equation}
Next, we recall the dual space of $L^{p,1}(\Omega)$, for $1<p<\infty$,
$$
\big(L^{p,1}(\Omega)\big)'=L^{p',\infty}(\Omega),\ \ p'=\frac{p}{p-1}.
$$
The definition of the dual space of a Lorentz space is very complicated, for other cases, see \cite{LG2} for details. We also recall a fundamental inequality in Lorentz Spaces called ‘Hunt's inequality’, which can be found in \cite{hunt}.
\begin{lem}\label{Holder}
	Suppose that $0<p,p_{1},p_{2}\leq\infty$ and $0<q,q_{1},q_{2}\leq\infty$ satisfies $\frac{1}{p}=\frac{1}{p_{1}}+\frac{1}{p_{2}}$ and $\frac{1}{q}=\frac{1}{q_{1}}+\frac{1}{q_{2}}$. Then the assumption that $f\in L^{p_{1},q_{1}}(\Omega)$ and $g\in L^{p_{2},q_{2}}(\Omega)$ implies that $fg\in L^{p,q}(\Omega)$, with the estimate
	$$
	\|fg\|_{L^{p,q}(\Omega)}\leq C(p_{1},p_{2},q_{1},q_{2})\|f\|_{L^{p_{1},q_{1}}(\R^{3})}\|g\|_{L^{p_{2},q_{2}}(\Omega)}.
	$$
\end{lem}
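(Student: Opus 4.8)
\emph{Proof proposal.} The plan is to deduce the asserted estimate from an elementary Hölder inequality on the half line $(0,\infty)$ carrying the homogeneous measure $dt/t$, the bridge between the two being the rearrangement inequality for products. Throughout, the Lorentz functional $\|\cdot\|_{L^{p,q}}$ is understood as the one built directly from the non-increasing rearrangement $f^{\ast}$, exactly as in Definition \ref{D-1}.

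The first step is the pointwise bound
$$
(fg)^{\ast}(t_1+t_2)\leq f^{\ast}(t_1)\,g^{\ast}(t_2)\qquad(t_1,t_2>0),
$$
valid for all measurable $f,g$ on $\Omega$. It follows from the distribution-function description of $f^{\ast}$: for $\alpha>f^{\ast}(t_1)$ and $\beta>g^{\ast}(t_2)$ one has $|\{|f|>\alpha\}|\leq t_1$ and $|\{|g|>\beta\}|\leq t_2$, hence $\{|fg|>\alpha\beta\}\subseteq\{|f|>\alpha\}\cup\{|g|>\beta\}$ has measure at most $t_1+t_2$, so $(fg)^{\ast}(t_1+t_2)\leq\alpha\beta$; taking the infimum over admissible $\alpha,\beta$ gives the claim. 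Choosing $t_1=t_2=t/2$ yields the pointwise estimate $(fg)^{\ast}(t)\leq f^{\ast}(t/2)\,g^{\ast}(t/2)$ for every $t>0$.

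Assume first that $p,q,p_i,q_i$ are all finite. Inserting the last bound into Definition \ref{D-1}, splitting $t^{1/p}=t^{1/p_1}t^{1/p_2}$ (legitimate since $\tfrac1p=\tfrac1{p_1}+\tfrac1{p_2}$) and substituting $s=t/2$ gives
$$
\|fg\|_{L^{p,q}(\Omega)}\leq 2^{1/p}\Big(\int_0^{\infty}\big(s^{1/p_1}f^{\ast}(s)\big)^{q}\big(s^{1/p_2}g^{\ast}(s)\big)^{q}\,\frac{ds}{s}\Big)^{1/q}.
$$
From $\tfrac1q=\tfrac1{q_1}+\tfrac1{q_2}$ one gets $\tfrac{q}{q_1}+\tfrac{q}{q_2}=1$ with $q<q_1$ and $q<q_2$, so the exponents $q_1/q$ and $q_2/q$ are conjugate and exceed $1$; Hölder's inequality on $\big((0,\infty),\,ds/s\big)$ then separates the two factors and identifies them as $\|f\|_{L^{p_1,q_1}(\Omega)}$ and $\|g\|_{L^{p_2,q_2}(\Omega)}$, proving the inequality with $C=2^{1/p}$.

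It remains to dispose of the endpoint configurations. When some $p_i=\infty$ the computation above is unchanged (read $s^{1/p_i}$ as $1$). If $q_2=\infty$ — which forces $q=q_1$ — I would instead use the pointwise estimate $s^{1/p_2}g^{\ast}(s/2)\leq 2^{1/p_2}\|g\|_{L^{p_2,\infty}(\Omega)}$ before integrating, which reduces matters to the single weighted integral defining $\|f\|_{L^{p_1,q_1}(\Omega)}$; the case $q_1=\infty$ is symmetric. Finally, if $q=\infty$ — so $q_1=q_2=\infty$ as well — the integral is replaced by a supremum and one bounds $\sup_{s>0}s^{1/p}(fg)^{\ast}(s)$ by the product $\big(\sup_{s>0}s^{1/p_1}f^{\ast}(s/2)\big)\big(\sup_{s>0}s^{1/p_2}g^{\ast}(s/2)\big)$, each factor being the corresponding weak-$L^{p_i}$ norm up to the harmless constant $2^{1/p_i}$. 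There is no genuine obstacle here: the estimate is the classical Hölder inequality for Lorentz spaces of O'Neil and Hunt, and the only care needed is the exponent bookkeeping in the endpoint cases together with the observation that, since the statement uses the $f^{\ast}$-functional rather than the renormalized $f^{\ast\ast}$-norm (a genuine norm only when $p>1$), the crude product rearrangement bound of the second step already suffices and O'Neil's sharper averaged inequality is not required. Alternatively one may simply quote Hunt \cite{hunt}, as the text does.
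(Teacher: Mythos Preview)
Your proof is correct and self-contained; the paper, by contrast, does not prove this lemma at all but merely states it as a known result (``Hunt's inequality'') and refers to \cite{hunt}. So your proposal goes well beyond the paper's treatment: you give the standard argument via the pointwise rearrangement bound $(fg)^{\ast}(t)\leq f^{\ast}(t/2)\,g^{\ast}(t/2)$ followed by H\"older's inequality on $\big((0,\infty),ds/s\big)$, and you correctly handle the endpoint cases $q_i=\infty$ and $q=\infty$. The only remark is that the paper's Definition~\ref{D-1} formally covers only $1<p<\infty$, $1\leq q\leq\infty$, whereas the lemma is stated for the full range $0<p,q\leq\infty$; your argument, being based on the $f^{\ast}$-functional rather than $f^{\ast\ast}$, is insensitive to this and works throughout, as you note.
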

Next, we review the properties of convolution between Lorentz space.
\begin{lem}[\cite{LR}]\label{Lorentz}
	Let $1<p<\infty$, $1\leq q\leq\infty$, $\frac{1}{p'}+\frac{1}{p}=1$ and $\frac{1}{q'}+\frac{1}{q}=1$. Then convolution is a bounded bilinear operator:
	
	{\rm (i)} from $L^{p,q}(\R^{n})\times L^{1}(\R^{n})$ to $L^{p,q}(\R^{n})$;
	
	{\rm (ii)} from $L^{p,q}(\R^{n})\times L^{p',q'}(\R^{n})$ to $L^{\infty}(\R^{n})$;
	
	{\rm (iii)} from $L^{p,q}(\R^{n})\times L^{p_{1},q_{1}}(\R^{n})$ to $L^{p_{2},q_{2}}(\R^{n})$ for $1<p,p_{1},p_{2}<\infty$, $1\leq q,q_{1},q_{2}\leq\infty$, $\frac{1}{p_{2}}+1=\frac{1}{p}+\frac{1}{p_{1}}$ and $\frac{1}{q_{2}}=\frac{1}{q}+\frac{1}{q_{1}}$.
	
\end{lem}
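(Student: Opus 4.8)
The plan is to prove the three assertions separately, since (i) and (ii) are precisely the two endpoint configurations — $p_{1}=1$ in the second factor, and $L^{\infty}$ as the target — that lie outside the scope of the off‑diagonal estimate (iii), and each can be obtained directly from tools already available in this appendix.

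For (i) I would fix $g\in L^{1}(\R^{n})$ and regard $T_{g}\colon f\mapsto f*g$ as a linear map. Classical Young's inequality with the $L^{1}$ factor gives $\|T_{g}f\|_{L^{r}}\le\|g\|_{L^{1}}\|f\|_{L^{r}}$ for every $1\le r\le\infty$, so $T_{g}$ is bounded on each Lebesgue space with operator norm at most $\|g\|_{L^{1}}$. Choosing exponents $1<p_{0}<p<p_{1}<\infty$ and invoking the real‑interpolation description $L^{p,q}(\R^{n})=(L^{p_{0}},L^{p_{1}})_{\theta,q}$ recorded in this appendix, the interpolation of bounded operators yields $\|f*g\|_{L^{p,q}}\le C\|f\|_{L^{p,q}}\|g\|_{L^{1}}$, which is (i). For (ii) I would argue pointwise: writing $(f*g)(x)=\int_{\R^{n}}f(y)g(x-y)\,dy$ and applying the Lorentz Hölder inequality (Lemma \ref{Holder}) with $\tfrac1p+\tfrac1{p'}=1$ and $\tfrac1q+\tfrac1{q'}=1$, so that $f\cdot g(x-\cdot)\in L^{1,1}=L^{1}$, one gets, using translation invariance of the non‑increasing rearrangement, $|(f*g)(x)|\le C\|f\|_{L^{p,q}}\|g(x-\cdot)\|_{L^{p',q'}}=C\|f\|_{L^{p,q}}\|g\|_{L^{p',q'}}$ for a.e.\ $x$; taking the supremum in $x$ gives (ii).

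The substance is in (iii). I would base it on O'Neil's pointwise rearrangement estimate: for nonnegative measurable $f,g$ on $\R^{n}$,
\[
(f*g)^{**}(t)\ \le\ t\,f^{**}(t)\,g^{**}(t)+\int_{t}^{\infty} f^{*}(s)\,g^{*}(s)\,ds,\qquad h^{**}(t):=\tfrac1t\int_{0}^{t} h^{*}(s)\,ds.
\]
Since $p_{2}>1$, a one‑sided Hardy inequality makes $\|t^{1/p_{2}}(f*g)^{**}(t)\|_{L^{q_{2}}(dt/t)}$ comparable to $\|f*g\|_{L^{p_{2},q_{2}}}$, so it suffices to bound the $L^{q_{2}}(dt/t)$‑norm, with weight $t^{1/p_{2}}$, of the two terms on the right. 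Using the scaling relation $1+\tfrac1{p_{2}}=\tfrac1p+\tfrac1{p_{1}}$, the first term equals $\big(t^{1/p}f^{**}(t)\big)\big(t^{1/p_{1}}g^{**}(t)\big)$; Hölder's inequality in $L^{q_{2}}(dt/t)$ with $\tfrac1{q_{2}}=\tfrac1q+\tfrac1{q_{1}}$, together with the Hardy equivalences $\|t^{1/p}f^{**}\|_{L^{q}(dt/t)}\approx\|f\|_{L^{p,q}}$ and $\|t^{1/p_{1}}g^{**}\|_{L^{q_{1}}(dt/t)}\approx\|g\|_{L^{p_{1},q_{1}}}$ (valid because $p,p_{1}>1$), bounds it by $C\|f\|_{L^{p,q}}\|g\|_{L^{p_{1},q_{1}}}$. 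For the tail term, since $1/p_{2}>0$ the dual Hardy inequality $\big\|t^{1/p_{2}}\int_{t}^{\infty}\phi(s)\,ds\big\|_{L^{q_{2}}(dt/t)}\le C\big\|t^{1+1/p_{2}}\phi\big\|_{L^{q_{2}}(dt/t)}$ applied to $\phi=f^{*}g^{*}$ reduces it, via the same scaling identity, to $\big\|(t^{1/p}f^{*})(t^{1/p_{1}}g^{*})\big\|_{L^{q_{2}}(dt/t)}$, which Hölder in $L^{q_{2}}(dt/t)$ bounds by $\|f\|_{L^{p,q}}\|g\|_{L^{p_{1},q_{1}}}$ using the definition of the Lorentz norm in Definition \ref{D-1}. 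Adding the two bounds gives (iii).

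I expect the main obstacle to be establishing O'Neil's rearrangement estimate itself — the distribution‑function bookkeeping behind splitting $f*g$ into a \emph{mass} part controlled by $t\,f^{**}g^{**}$ and a \emph{tail} part controlled by $\int_{t}^{\infty} f^{*}g^{*}$ — together with checking that the one‑sided and dual Hardy inequalities and the Hölder step remain valid uniformly down to $q_{2}=\infty$, where one uses their supremum versions. An alternative that avoids the rearrangement estimate is to deduce (iii) by bilinear real interpolation from the family of Young estimates $\|f*g\|_{L^{r}}\le\|f\|_{L^{a}}\|g\|_{L^{b}}$ with $\tfrac1r+1=\tfrac1a+\tfrac1b$; this shifts the difficulty onto the off‑diagonal multilinear interpolation theorem, which must produce the index relation $\tfrac1{q_{2}}=\tfrac1q+\tfrac1{q_{1}}$, so either way the real work concentrates in one standard but nontrivial lemma.
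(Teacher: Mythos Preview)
The paper does not prove this lemma at all: it is stated in the appendix with a citation to \cite{LR} (Lemari\'e-Rieusset) and no argument is given. There is therefore no ``paper's own proof'' to compare against; the authors simply quote the result as a standard fact about Lorentz spaces.

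Your proof sketch is correct and is essentially the classical route. Part (i) via Young's inequality on $L^{r}$ plus real interpolation $(L^{p_{0}},L^{p_{1}})_{\theta,q}=L^{p,q}$ is standard, and part (ii) via Hunt's pointwise H\"older inequality (Lemma~\ref{Holder} in the paper) together with translation invariance of the rearrangement is exactly right. For (iii), O'Neil's rearrangement inequality
\[
(f*g)^{**}(t)\le t\,f^{**}(t)\,g^{**}(t)+\int_{t}^{\infty}f^{*}(s)\,g^{*}(s)\,ds
\]
combined with the weighted Hardy inequalities and H\"older in $L^{q_{2}}(dt/t)$ is the textbook argument (this is precisely how it is done in O'Neil's original paper and in standard references such as Grafakos or Lemari\'e-Rieusset). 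Your identification of the two technical points --- proving O'Neil's estimate itself and handling the endpoint $q_{2}=\infty$ via supremum versions of Hardy --- is accurate, and the bilinear-interpolation alternative you mention is also a legitimate and well-known route. Nothing is missing.
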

The assumption on the initial values in Theorem \ref{T1.1} implies
$$
|(u_{0},b_{0},\theta_{0})|\leq\frac{C}{|x|}.
$$
Therefore, we conclude this paper by investigating some properties about the function $\frac{1}{|x|}$.
\begin{lem}\label{Ap1}
	Let $f(x)=\frac{1}{|x|}$ and $F(x,t)=e^{t\Delta}f(x)=G_{t}\ast f(x)$. Then we have
	\begin{equation}\label{A01}
	 F(x,t)\in BC_{w}([0,+\infty),L^{3,\infty}(\R^{3})).
		\end{equation}
	Furthermore, let $\bar{F}(x)=e^{\Delta/2}f(x)=G_{1/2}\ast f(x)$, 
	for all $x\in\R^{3}$, we also have
	\begin{align}\label{decay estimate1}
		\begin{split}
			|\bar{F}(x)|+|\nabla\bar{F}(x)|\leq C(1+|x|)^{-1},
		\end{split}
	\end{align}
	where the constant $C>0$ independent on $x$.
\end{lem}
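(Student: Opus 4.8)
The plan is to handle the two parts of the lemma in turn, establishing first the pointwise bound \eqref{decay estimate1}, which is the computational core and, applied at a general time, also yields the membership $F(\cdot,t)\in L^{3,\infty}(\R^3)$ needed for \eqref{A01}.

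For \eqref{decay estimate1} I would write $\bar F(x)=\int_{\R^3}G_{1/2}(x-y)|y|^{-1}\,dy$; since $G_{1/2}$ is smooth with $|G_{1/2}(z)|+|\nabla G_{1/2}(z)|\leq Ce^{-|z|^2/4}$ and $|y|^{-1}\in L^1_{loc}(\R^3)$ vanishes at infinity, differentiation under the integral is legitimate and one splitting handles $\bar F$ and $\nabla\bar F$ simultaneously. For $|x|\geq1$ I would split $\int_{\R^3}=\int_{|y|<|x|/2}+\int_{|y|\geq|x|/2}$: on the first piece $|x-y|\geq|x|/2$ gives a factor $\leq Ce^{-|x|^2/16}$ against $\int_{|y|<|x|/2}|y|^{-1}\,dy=C|x|^2$, so that part is $\leq C|x|^2e^{-|x|^2/16}\leq C|x|^{-1}$; on the second piece $|y|^{-1}\leq 2|x|^{-1}$ pulls out and $\int_{\R^3}(G_{1/2}+|\nabla G_{1/2}|)\leq C$. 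For $|x|\leq1$ I would split instead at $|y|=1$, using $G_{1/2}+|\nabla G_{1/2}|\leq C$ on $\{|y|<1\}$ together with $\int_{|y|<1}|y|^{-1}\,dy<\infty$, and $|y|^{-1}\leq1$ on $\{|y|\geq1\}$, to get $|\bar F|+|\nabla\bar F|\leq C$ there. Combining the regimes gives \eqref{decay estimate1}; in particular $|\bar F(x)|\leq C|x|^{-1}$, hence $\bar F\in L^{3,\infty}(\R^3)$, and the same computation with $1/2$ replaced by any $t>0$ shows $|e^{t\Delta}f(x)|\leq C_t(1+|x|)^{-1}$.

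For the uniform boundedness in \eqref{A01} I would use that $f(x)=|x|^{-1}\in L^{3,\infty}(\R^3)$ and that convolution maps $L^{3,\infty}(\R^3)\times L^1(\R^3)$ into $L^{3,\infty}(\R^3)$ by Lemma \ref{Lorentz}(i); since $\|G_t\|_{L^1}=1$ this gives $\sup_{t\geq0}\|F(\cdot,t)\|_{L^{3,\infty}(\R^3)}\leq C\|f\|_{L^{3,\infty}(\R^3)}<\infty$. (Alternatively, the $(-1)$-homogeneity of $f$ gives the self-similar identity $F(x,t)=t^{-1/2}F(x/\sqrt t,1)$ after the change of variables $y=\sqrt t\,z$, and since $\|\mu\,g(\mu\,\cdot)\|_{L^{3,\infty}(\R^3)}=\|g\|_{L^{3,\infty}(\R^3)}$ for every $\mu>0$, the bound reduces to $F(\cdot,1)\in L^{3,\infty}(\R^3)$.) For the weak-$\ast$ continuity I would use $L^{3,\infty}(\R^3)=(L^{3/2,1}(\R^3))'$, set $F(\cdot,0):=f$, and observe that by the uniform bound it suffices to show $t\mapsto\langle F(\cdot,t),g\rangle$ is continuous on $[0,\infty)$ for $g$ in the dense subspace $C_0^\infty(\R^3)\subset L^{3/2,1}(\R^3)$ (a uniform limit of continuous functions being continuous). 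For such $g$, Lemma \ref{Holder} and Lemma \ref{Lorentz}(i) make the relevant double integral absolutely convergent, so by Fubini and the evenness of $G_t$ one has $\langle F(\cdot,t),g\rangle=\int_{\R^3}f(y)(e^{t\Delta}g)(y)\,dy$; a direct kernel estimate gives $|(e^{t\Delta}g)(y)|\leq C(1+|y|)^{-3}=:h(y)$ uniformly in $t\geq0$, with $h\in L^{3/2,1}(\R^3)$, while $(e^{t\Delta}g)(y)\to(e^{t_0\Delta}g)(y)$ pointwise as $t\to t_0$, and since $f\,h\in L^1(\R^3)$ the scalar dominated convergence theorem gives the continuity. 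Taking $t_0=0$ yields $F(\cdot,t)\to f$ weak-$\ast$ in $L^{3,\infty}(\R^3)$ as $t\to0^+$, which together with the uniform bound is exactly \eqref{A01}.

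I expect the Gaussian splittings to be routine and the only genuinely delicate point to be the weak-$\ast$ continuity at $t=0$, since one is working in the non-reflexive pair $(L^{3/2,1},L^{3,\infty})$. Testing only against $C_0^\infty(\R^3)$ and reducing to the scalar dominated convergence theorem is the device that avoids having to prove strong continuity of the heat semigroup directly on the Lorentz space $L^{3/2,1}(\R^3)$; making the kernel bound on $e^{t\Delta}g$ uniform across a neighborhood of each $t_0$ (including $t_0=0$) is the one step that needs slight care.
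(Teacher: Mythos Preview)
Your proposal is correct, and the overall architecture (convolution bound for boundedness, duality transfer for weak-$\ast$ continuity) matches the paper's, but the two key computations are carried out differently. For the decay bound \eqref{decay estimate1}, you do a direct near/far splitting of the convolution integral, whereas the paper uses the triangle inequality $|x|\leq|x-y|+|y|$ to write $|x|\bar F(x)$ as a sum of two convolutions and then invokes the Lorentz convolution estimate of Lemma~\ref{Lorentz}(ii) (namely $|\cdot|G_{1/2}\in L^{3/2,1}$ against $|\cdot|^{-1}\in L^{3,\infty}$, plus the trivial term $\int G_{1/2}=1$); the paper's route is shorter and reuses the Lorentz machinery already in place, while yours is more self-contained. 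For the weak-$\ast$ continuity at $t=0$, both arguments move the semigroup onto the test function via duality/Fubini; the paper then proves $\|G_t\ast\phi-\phi\|_{L^{3/2,1}}\to0$ by approximating $\phi\in L^{3/2,1}$ with $\phi_\varepsilon\in C_0^\infty$ and using parabolic regularity to get $G_t\ast\phi_\varepsilon\in C^{0,1}((0,\infty),L^{3/2,1})$, whereas you fix $g\in C_0^\infty$ from the start and close with dominated convergence via the uniform pointwise bound $|e^{t\Delta}g(y)|\leq C(1+|y|)^{-3}$. Your version avoids having to establish strong continuity of the heat flow in $L^{3/2,1}$ at the cost of the kernel estimate, which (as you note) is the only place requiring care; both are perfectly valid.
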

\begin{proof}
	The proof of this result is fundamental, and for the convenience of the reader, we outline the proof.
	
Since $f(x)=\frac{1}{|x|}\in L^{3,\infty}(\R^{3})$.
By Lemma \ref{Lorentz}, we have
$$
\|F(t)\|_{L^{3,\infty}(\R^{3})}=\|G_{t}\ast f\|_{L^{3,\infty}(\R^{3})}\leq \|G_{t}\|_{1}\|f\|_{L^{3,\infty}(\R^{3})}< \infty.
$$
Next, we will show $F(x,t)$ is weak star continuous in $L^{3,\infty}(\R^{3})$ with respect to $t$. Indeed, it is sufficient to consider the continuity at $t=0$. That is, for any $\phi\in L^{\frac{3}{2},1}(\R^{3})$, we need to prove
\begin{equation}\label{A3-1}
|\langle F-f,\phi\rangle|=|\langle G_{t}\ast f-f,\phi\rangle|\rightarrow0, \ \ \ t\rightarrow0^{+}.
\end{equation}
Since $C_{0}^{\infty}(\R^{3})$ is dense in $L^{\frac{3}{2},1}(\R^{3})$, for any $\varepsilon>0$, one can find a sequence $\phi_{\varepsilon}\in C_{0}^{\infty}(\R^{3})$ such that
\begin{equation}\label{A3-001}
\|\phi-\phi_{\varepsilon}\|_{L^{\frac{3}{2},1}(\R^{3})}<\frac{\varepsilon}{3}.
\end{equation}
Let $v_{\varepsilon}(x,t)=G_{t}\ast\phi_{\varepsilon}$. Since $\phi_{\varepsilon}\in C_{0}^{\infty}(\R^{3})$ and using estimates for the parabolic equation, for any $k\geq0$, we can easily get that
$$
v_{\varepsilon}, \partial_{t}v_{\varepsilon}\in L^{\infty}\big((0,+\infty),W^{k,1}(\R^{3})\big).
$$
Then, applying the embedding theorem and \eqref{A-01} yields
\begin{equation}\label{A3-2}
v_{\varepsilon}\in C^{0,1}((0,+\infty),L^{\frac{3}{2},1}(\R^{3})).
\end{equation}
On the other hand, returning to \eqref{A3-1}, using Lemma \ref{Holder}, 
\begin{equation*}
	|\langle G_{t}\ast f-f,\phi\rangle|=|\langle f,G_{t}\ast\phi-\phi\rangle|\leq \|f\|_{L^{3,\infty}(\R^{3})}\|G_{t}\ast \phi-\phi\|_{L^{\frac{3}{2},1}(\R^{3})}.
\end{equation*}
As $t\rightarrow0^{+}$, by Lemma \ref{Lorentz}, \eqref{A3-001} and \eqref{A3-2}, it follows that
\begin{align*}
	\begin{split}
		\|&G_{t}\ast \phi-\phi\|_{L^{\frac{3}{2},1}(\R^{3})}\\&\leq \|G_{t}\ast(\phi-\phi_{\varepsilon})\|_{L^{\frac{3}{2},1}(\R^{3})}+
		\|\phi-\phi_{\varepsilon}\|_{L^{\frac{3}{2},1}(\R^{3})}+\|G_{t}\ast\phi_{\varepsilon}-\phi_{\varepsilon}\|_{L^{\frac{3}{2},1}(\R^{3})}\\&
		\leq \frac{\varepsilon}{3}+\frac{\varepsilon}{3}+\frac{\varepsilon}{3}< \varepsilon.
	\end{split}
\end{align*}
Thus, the arbitrariness of $\varepsilon$ allows us to obtain \eqref{A01}.

In the following, we prove \eqref{decay estimate1}. By the well-known decay estimate of the heat kernel, one can get $G_{1/2},\nabla G_{1/2}$ belong to $L^{p,q}(\R^{3})$ (for any $1<p<\infty$ and $1\leq q\leq\infty$). On the one hand, using Lemma \ref{Lorentz}, we get
\begin{align}\label{A3-3}
	\begin{split}
	\|\bar{F}(x)\|_{\infty}+\|\nabla\bar{F}(x)\|_{\infty}&\leq C\Big(\|G_{1/2}\|_{L^{\frac{3}{2},1}(\R^{3})}+\|\nabla G_{1/2}\|_{L^{\frac{3}{2},1}(\R^{3})}\Big)\|f(x)\|_{L^{3,\infty}(\R^{3})}\\& \leq C.
	\end{split}
\end{align}
On the other hand, 
\begin{align}\label{A3-4}
	\begin{split}
		\Big||x|\bar{F}(x)\Big|&=\Big|\frac{|x|}{(2\pi)^{\frac{3}{2}}}\int_{\R^{3}}e^{-\frac{|x-y|^{2}}{2}}f(y)dy\Big|\\&\leq
		\frac{1}{(2\pi)^{\frac{3}{2}}}\int_{\R^{3}}|x-y|e^{-\frac{|x-y|^{2}}{2}}|f(y)|dy+\frac{1}{(2\pi)^{\frac{3}{2}}}\int_{\R^{3}}e^{-\frac{|x-y|^{2}}{2}}|y||f(y)|dy
		\\&\leq \big\||\cdot|G_{1/2}(x)\big\|_{L^{\frac{3}{2},1}(\R^{3})}\|f(x)\|_{L^{3,\infty}(\R^{3})}+1\\&
		\leq C.
	\end{split}
\end{align}
Similarly,
\begin{align}\label{A3-5}
	\begin{split}
		\Big||x|\nabla\bar{F}(x)\Big|&= \Big|\frac{|x|}{(2\pi)^{\frac{3}{2}}}\nabla\int_{\R^{3}}e^{-\frac{|x-y|^{2}}{2}}f(y)dy\Big|
		\\&\leq \big\||\cdot|\nabla G_{1/2}(x)\big\|_{L^{\frac{3}{2},1}(\R^{3})}\|f(x)\|_{L^{3,\infty}(\R^{3})}+\|\nabla G_{1/2}(x)\|_{1}\\&\leq C.
	\end{split}
\end{align}
The above calculation depends on the rapid decay of $G_{1/2}$ and $\nabla G_{1/2}$ at the spatial infinity. Collecting \eqref{A3-3}-\eqref{A3-5}, we conclude the proof of \eqref{decay estimate1}.

\end{proof}

\end{appendices}

\bibliographystyle{unsrt}

 \end{CJK}
 \end{document}